\renewcommand{\phi}{\varphi}
\renewcommand{\epsilon}{\varepsilon}
\newcommand{\N}{\mathbb{N}}
\newcommand{\Z}{\mathbb{Z}}
\newcommand{\R}{\mathbb{R}}
\newcommand{\C}{\mathbb{C}}
\renewcommand{\P}{\mathbb{P}}
\newcommand{\im}{\operatorname{im}}
\renewcommand{\P}{\mathbb{P}}
\newcommand{\vol}{\rm vol}
\newcommand{\p}{\partial}
\newcommand{\norm}[1]{\Vert #1 \Vert}
\newcommand{\Ric}{\operatorname{Ric}}
\providecommand{\norm}[1]{\lVert#1\rVert}
\newtheoremstyle{fancy}{}{}{\itshape}{}{\textbf\bgroup}{.\egroup}{ }{}
\newtheoremstyle{fancy2}{}{}{\rm}{}{\textbf\bgroup}{.\egroup}{ }{}
\theoremstyle{fancy}
\newtheorem{theorem}{Theorem}[section]
\newtheorem{lemma}[theorem]{Lemma}
\newtheorem{corollary}[theorem]{Corollary}
\newtheorem{prop}[theorem]{Proposition}
\newtheorem{conj}[theorem]{Conjecture}
\newtheorem{obs}[theorem]{Observation}
\theoremstyle{fancy2}
\newtheorem{definition}[theorem]{Definition}
\newtheorem{example}[theorem]{Example}
\newtheorem{remark}[theorem]{Remark}
\setlist{leftmargin=*}
\numberwithin{equation}{section}
\begin{document}
\title{Asymptotically conical Calabi-Yau manifolds, I}
\date{\today}
\author{Ronan J.~Conlon}
\address{D\'epartement de Math\'ematiques, Universit\'e du Qu\'ebec \`a Montr\'eal, Case Postale 8888, Succursale
Centre-ville, Montr\'eal (Qu\'ebec), H3C 3P8, Canada}
\email{rconlon@cirget.ca}
\author{Hans-Joachim Hein}
\address{Laboratoire de Math{\'e}matiques Jean Leray, Universit{\'e} de Nantes, 2 rue de la Houssini{\`e}re, 44322 Nantes Cedex 3, France}
\email{hansjoachim.hein@univ-nantes.fr}
\date{\today}
\begin{abstract}
This is the first part in a series of articles on complete Calabi-Yau manifolds asymptotic to Riemannian cones at infinity. We begin by proving  general existence and uniqueness results. The uniqueness part relaxes the decay condition $O(r^{-n-\epsilon})$ needed in earlier work to $O(r^{-\epsilon})$, relying on some new ideas about 
harmonic functions.
We then look at a few examples: (1) Crepant resolutions of cones. This includes a new class of Ricci-flat small resolutions associated with flag manifolds. (2) Affine deformations of cones. One focus here is the question of the precise rate of decay of the metric to its tangent cone. We prove that the optimal 
rate for the Stenzel metric on $T^*S^n$ is $-2\frac{n}{n-1}$.
\end{abstract}

\maketitle
\markboth{Ronan J.~Conlon and Hans-Joachim Hein}{Asymptotically conical Calabi-Yau manifolds, I}

\section{Introduction}

\subsection{Background} Consider a complete Riemannian manifold $(M, g)$ with ${\rm Ric} \geq 0$ and Euclidean volume growth. Cheeger-Colding theory \cite{ChNotes} implies that each blowdown sequence $(M, \lambda_i g)$, $\lambda_i \to 0$, subconverges in the pointed Gromov-Hausdorff sense to the cone over some length space. If ${\rm Ric} = 0$, {then} this cone is likely {to be} the same for all  sequences, so that $M$ would be
\emph{asymptotically} \emph{conical} in a Gromov-Hausdorff sense. This was proved by Cheeger and Tian \cite{Cheeger}, assuming $|{\rm Rm}| = O(r^{-2})$
(so that the links of all bona fide tangent cones are \emph{smooth}) and an integrability {condition}. In the course of {their} proof, they in fact establish $C^\infty$ convergence to one such cone at a rate of $O(r^{-\epsilon})$.

In this paper and its sequels, we wish to construct a careful theory of complete Ricci-flat K{\"a}hler manifolds that are asymptotically conical (\textquotedblleft AC\textquotedblright) in this stronger sense. If we think of noncompact
Calabi-Yau manifolds as bubbles in the singularity formation of
K{\"a}hler-Einstein manifolds, then at least heuristically, these are precisely the ones that correspond to noncollapsed isolated singularities. Besides \cite{Cheeger}, the foundational paper in this area is Tian-Yau 
\cite{Tian}, but see also \cite{goto, Joyce, vanC2}.

\begin{remark} Ricci-flat K{\"a}hler manifolds of Euclidean volume growth whose tangent cones are \emph{not} smooth, and in fact not even products of smooth cones, exist as well:
Joyce's QALE spaces \cite{Joyce} are desingularizations of very general flat orbifolds $\C^n/\Gamma$. Biquard-Gauduchon \cite{BG} wrote down explicit hyper-K{\"a}hler examples whose tangent cones are realized as nilpotent orbit closures in $\mathfrak{sl}(N,\C)$.
\end{remark}

The following three basic AC Ricci-flat examples are helpful to keep in mind throughout.

\begin{example}Calabi's metric on the total space of $K_{\P^{n-1}}$ \cite{Cal1}, whose tangent cone is $\C^n/\Z_n$. The underlying complex manifold is a \emph{crepant resolution} of the Calabi-Yau cone $\C^n/\Z_n$.
\end{example}

\begin{example} Stenzel's metric on $T^*S^n$ \cite{Stenzel}. Holomorphically, this is a \emph{deformation} of the ordinary double point in $\C^{n+1}$, containing a special Lagrangian $S^n$ rather than a holomorphic $\P^{n-1}$.
\end{example}

These two examples are related by {a} hyper-K{\"a}hler rotation if $n = 2$. If $n > 2$, then the first cone has no deformations. If $n > 3$, then the second cone has no crepant resolutions. Both manifolds are quasiprojective and may be compactified by adding positive divisors at infinity.

\begin{example}
Construct $M = X \setminus D$, 
where $X$ is a singular cubic $3$-fold $z_1 F(z) + z_2 G(z) = 0$ in $\P^4$ blown up along the plane $z_1 = z_2 = 0$, and $D$ denotes the proper transform of a hyperplane section avoiding the $4$ singularities. Then $M$ has a maximal compact analytic subset consisting of $4$ copies of $\P^1$. AC Ricci-flat K\"ahler metrics on $M$ were obtained in \cite{Tian}. 
In terms of singularity formation, this example represents a \emph{bubble tree}: The tangent cone to the singularity is the cone over a cubic surface; the deepest bubbles are small resolutions of the conifold (see \cite{delaossa} and Example \ref{ex:cdo}).
\end{example}

\subsection{Results}

The theoretical part of this paper (Sections \ref{s:existence}--\ref{s:uniqueness}) can be summarized as follows.\medskip\

\noindent {\bf Existence and uniqueness.} \emph{Let $M$ be an open complex manifold of complex dimension $n > 2$ with a holomorphic volume form $\Omega$. Let $(C, g_0)$ be a Calabi-Yau cone with holomorphic volume form $\Omega_0$ and let $r$ denote the radius function of $C$. Let $\Phi$ be a diffeomorphism
from $C$ to $M$, away from compact subsets of each, such that $\Phi^*\Omega - \Omega_0 = O(r^{-\epsilon})$ with respect to $g_0$ for some $\epsilon > 0$. If a class $\mathfrak{k} \in H^2(M)$ contains a K\"ahler form satisfying a mild asymptotic {condition}, then for all $c > 0$, $\mathfrak{k}$ also contains a unique Ricci-flat K{\"a}hler form $\omega_c$ such that $\Phi^*\omega_c - c\omega_0 = O(r^{-\delta})$ for some $\delta > 0$.} \medskip\

The existence part improves upon known results \cite{goto, Joyce, Tian, vanC3, vanC2}. Note that we do not require $\epsilon > 2$ or $\mathfrak{k} \in H^2_c(M)$, and that we exhibit the existence of a parameter $c$ in each K\"ahler class.

Uniqueness was known assuming that the two metrics in question are $O(r^{-n-\delta})$ close \cite{goto, Joyce, vanC3}, but this limits applications because AC Calabi-Yau metrics often decay more slowly than $O(r^{-n})$. 
We pursue a completely different approach here that allows us to relax $O(r^{-n-\delta})$ to $O(r^{-\delta})$.\medskip\

\noindent {\bf New idea to deal with uniqueness.} One version of this idea asserts the following: \emph{A harmonic function of strictly less than quadratic growth on a complete {\rm AC} K{\"a}hler manifold with ${\rm Ric} \geq 0$ is of  
necessity pluriharmonic} (Corollary \ref{c:subquadratic}). The proof makes heavy
use of the AC structure (the key ingredient here is a lemma from Cheeger-Tian \cite{Cheeger}); contrary to a claim in Li \cite{li}, the statement is in fact false for less than maximal volume growth.
The same idea turns out to be
useful in other settings as well, e.g.~when dealing with isolated conical singularities.\medskip\

\noindent {\bf Examples:~General picture.} Van Coevering \cite{vanC} pointed out that all AC K{\"a}hler manifolds can be made Stein by contracting compact analytic sets. We then have the following rough general picture, which we will flesh out in Sections \ref{s:crepres}--\ref{smoothing} by looking at some (extreme) special cases.

$\bullet$ If the complex structures on $M$ and $C$ converge sufficiently fast (say at least $O(r^{-2n})$), then $M$ carries a $b^2(M)$-dimensional family of AC Ricci-flat K{\"a}hler metrics of generic decay $r^{-2}$ towards the cone whose leading terms are harmonic $(1,1)$-forms on $C$.
This family then contains a distinguished $d$-dimensional subfamily of Ricci-flat metrics with leading term $i\partial\bar{\partial}r^{2-2n}$ whose K{\"a}hler forms lie in the image of $H^2_c(M) \hookrightarrow H^2(M)$. Here $d$ is the number of irreducible compact divisors in $M$.

$\bullet$ If the complex structures converge more slowly, {then} we still have a $b^2(M)$-dimensional family of metrics, but we know little about their fine asymptotics.
The problem of finding the precise decay rates, e.g.~in terms of the algebraic structure of $M$, does not seem to be a computational one.\medskip\

\noindent {\bf Crepant resolutions.} If $M$ is a crepant resolution of $C$, then $M$ and $C$ are biholomorphic outside a compact set. We begin Section \ref{s:crepres} by clarifying what is known about AC Ricci-flat metrics on such resolutions, where the general theory is {now} fairly complete
thanks to \cite{goto, Joyce, vanC3, vanC2}. 

We then apply our existence and uniqueness theory to obtain a new class of Ricci-flat metrics on certain \emph{small} resolutions of Calabi-Yau cones associated with partial flag manifolds.
For example, if $\ell = \frac{n}{n+1-k} \in \N$, then we will see that 
the total space of $Q^* \otimes (\det T)^{\ell}$ carries a $1$-parameter family of AC Ricci-flat metrics of rate $r^{-2}$ and {of} cohomogeneity $1$ under SU$(n+1)$, where $T$ denotes the rank
$k$ tautological bundle over the Grassmannian 
$G(k,n+1)$ and $Q = \underline{\C}^{n+1}/T$. This recovers Calabi's
hyper-K{\"a}hler metrics on $T^*\P^n$ \cite{Cal1} if $k = 1$, but our discussion here is entirely PDE-based.\medskip\

\noindent {\bf Affine smoothings.} At the opposite end of the spectrum,
we can assume that $M$ is {already} affine.
Then $M$ and $C$ are no longer biholomorphic at infinity, and finding the precise
decay rates becomes nontrivial; see Section \ref{smoothing}. Of course, $d = b^2_c(M) = 0$ {here}. Moreover, $b^2(M) = 0$ if $M$ is a complete intersection, so that we then obtain a \emph{unique} AC Calabi-Yau metric up to scaling. In this case, we present an algorithm that at least provides us with a {definite estimate} on the decay rate.

If $M$ is the standard smoothing of the ordinary double point in $\C^{n+1}$, we recover Stenzel's metric on $T^*S^n$ and our estimate of the rate is $-2\frac{n}{n-1}$. In this very particular example, we are then able to show that this rate is in fact \emph{optimal.} This contradicts Theorem 0.16 of Cheeger-Tian \cite{Cheeger}; compare Remark \ref{r:ct1} and {see Remark \ref{r:ct2}} for details. The theoretical parts of \cite{Cheeger} remain unaffected.
 
\subsection{Preliminaries}

\subsubsection{Riemannian cones} For us, {the definition of a Riemannian cone will take the following form}.

\begin{definition}\label{cone}
Let $( L, g)$ 
be a compact connected Riemannian manifold. The \emph{Riemannian cone} $C$ with link $L$ is defined to be $\R^+ \times L$ with metric $g_0 = dr^2 \oplus r^2g$ up to isometry. The radius function $r$ is then characterized intrinsically as the distance from the apex in the metric completion.
\end{definition}
 
Suppose that we are given a Riemannian cone $(C,g_{0})$ as above. Let $(r,x)$ be polar coordinates on $C$, where $x\in L$, and for $t>0$, define a map
$$\nu_{t}: L\times[1,2] \ni (r,x) \mapsto (tr,x) \in L \times [t,2t].$$ One checks that $\nu_{t}^{*}(g_{0})=t^{2}g_{0}$ and $\nu^{*}_{t}\circ\nabla_{0}=\nabla_{0}\circ\nu_{t}^{*}$, where $\nabla_{0}$ is the  Levi-Civita connection of $g_{0}$. Using these facts, one can prove the following basic lemma which will be useful in Section \ref{smoothing}.

\begin{lemma}\label{simple321}
Suppose that $\alpha\in\Gamma((TC)^{\otimes p}\otimes (T^{*}C)^{\otimes q})$ satisfies $\nu_{t}^{*}(\alpha)=t^{k}\alpha$ for every $t>0$ for some $k\in\R$. Then $|\nabla_{0}^{l}\alpha|_{g_{0}}=O(r^{k+p-q-l})$ for all $l\in\N_0$.
\end{lemma}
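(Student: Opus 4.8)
The plan is to convert the homogeneity of $\alpha$ itself into a precise homogeneity of its pointwise norm $|\alpha|_{g_0}$, and then to read off the decay estimate by restricting to the compact slice $\{r=1\}\cong L$. The case $l=0$ already contains all the essential ideas, and the derivatives will be handled afterwards by an elementary induction.

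First I would record two scaling facts. Since $\nu_t\colon (C,\nu_t^*g_0)\to (C,g_0)$ is by construction an isometry, the induced bundle maps on every tensor power are fibrewise linear isometries, whence $|\nu_t^*\beta|_{\nu_t^*g_0}(y)=|\beta|_{g_0}(\nu_t(y))$ for any tensor field $\beta$ and any $y\in C$. Secondly, working in a $g_0$-orthonormal frame $\{e_a\}$ and noting that $\{t^{-1}e_a\}$ is $t^2g_0$-orthonormal, one checks that the components of a $(p,q)$-tensor in the rescaled frame are those in the original frame multiplied by $t^{\,p-q}$, so that $|\beta|_{t^2g_0}=t^{\,p-q}|\beta|_{g_0}$ pointwise. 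Feeding $\nu_t^*g_0=t^2g_0$ and $\nu_t^*\alpha=t^k\alpha$ into the first identity and using the second then yields
\[
|\alpha|_{g_0}(\nu_t(y)) = t^{\,k+p-q}\,|\alpha|_{g_0}(y)\qquad\text{for all } t>0.
\]
Taking $y=(1,x)$ with $x\in L$, so that $\nu_t(y)$ is the point at radius $r=t$ over $x$, this says exactly that $|\alpha|_{g_0}$ at radius $r$ equals $r^{\,k+p-q}$ times its value on the slice $\{r=1\}$. Since $L$ is compact and $\alpha$ is smooth, the latter is bounded, and the $l=0$ estimate $|\alpha|_{g_0}=O(r^{\,k+p-q})$ follows.

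For the covariant derivatives I would use the commutation relation $\nu_t^*\circ\nabla_0=\nabla_0\circ\nu_t^*$, iterated, together with the $\R$-linearity of $\nabla_0$: this gives $\nu_t^*(\nabla_0^l\alpha)=\nabla_0^l(\nu_t^*\alpha)=t^k\nabla_0^l\alpha$, so that $\nabla_0^l\alpha$ is again homogeneous of degree $k$ under $\nu_t^*$, only now it is a tensor of type $(p,q+l)$. Applying the displayed norm-homogeneity to $\nabla_0^l\alpha$ (with $q$ replaced by $q+l$) and invoking compactness of $L$ exactly as above produces $|\nabla_0^l\alpha|_{g_0}=O(r^{\,k+p-q-l})$, which is the claim.

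The computation is essentially routine once this framework is set up; the only real point of care, and the place where sign errors would creep in, is the exponent bookkeeping. In particular, one must verify that contravariant indices contribute $+1$ and covariant indices $-1$ to the power of $t$ in the conformal-rescaling identity $|\beta|_{t^2g_0}=t^{\,p-q}|\beta|_{g_0}$, and check that each covariant derivative lowers the total exponent by exactly $1$ because it adds one covariant slot while leaving the homogeneity weight $k$ untouched.
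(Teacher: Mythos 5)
Your proof is correct and follows exactly the route the paper intends: the paper omits the details but explicitly cites the two facts $\nu_t^*g_0=t^2g_0$ and $\nu_t^*\circ\nabla_0=\nabla_0\circ\nu_t^*$ as the ingredients, and your argument combines them with the conformal-rescaling identity $|\beta|_{t^2g_0}=t^{p-q}|\beta|_{g_0}$ and compactness of $L$ in the standard way. The exponent bookkeeping, including the shift from $(p,q)$ to $(p,q+l)$ under covariant differentiation, is all accurate.
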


We shall say that ``$\alpha=O(r^{\lambda})$ with $g_{0}$-derivatives'' whenever $|\nabla_{0}^{k}\alpha|_{g_{0}}=O(r^{\lambda-k})$ for every $k \in \N_0$.
We will then also say that $\alpha$ has ``rate at most $\lambda$'', or sometimes, for simplicity, ``rate $\lambda$'', although it should be understood that (at least when $\alpha$ is purely polynomially behaved and does not contain any $\log$ terms) the rate of $\alpha$ is really the infimum of all $\lambda$ for which this holds.

\subsubsection{K{\"a}hler and Calabi-Yau cones}  Boyer-Galicki \cite{book:Boyer} is a comprehensive reference here.

\begin{definition}A \emph{K{\"a}hler cone} is a Riemannian cone $(C,g_0)$ such that $g_0$ is K{\"a}hler, together with a choice of $g_0$-parallel complex structure $J_0$. This will in fact often be unique up to sign. We then have a K{\"a}hler form $\omega_0(X,Y) = g_0(J_0X,Y)$, and $\omega_0 = \frac{i}{2}\p\bar{\p} r^2$ with respect to $J_0$.
\end{definition}

We call a K{\"a}hler cone ``quasiregular'' if the Reeb field $J\partial_r$ on its link generates an
$S^1$-action (and, in particular, ``regular'' if this $S^1$-action is free), and ``irregular'' otherwise.

\begin{theorem}\label{t:affine}
For every K{\"a}hler cone $(C,g_0,J_0)$, the complex manifold $(C,J_0)$ is isomorphic to the smooth part of a normal algebraic variety $V \subset \C^N$ with one singular point. In addition, $V$ can be taken to be invariant under a $\C^*$-action $(t, z_1,...,z_N) \mapsto (t^{w_1}z_1,...,t^{w_N}z_N)$ such that all $w_i > 0$.
\end{theorem}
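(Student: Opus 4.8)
The plan is to exploit the holomorphic symmetries intrinsic to a K\"ahler cone in order to realise $(C,J_0)$ as the punctured affine cone over a projective orbifold, and then to invoke classical projective algebraic geometry. First I would record the two basic vector fields. Since the homotheties $\nu_t$ are biholomorphisms (the parallel complex structure $J_0$ is $\nu_t$-invariant), the Euler field $r\partial_r = \nabla(\tfrac12 r^2)$ is real holomorphic and generates the scaling $\R^+$-action, while the Reeb field $\xi = J_0(r\partial_r)$ is a holomorphic Killing field tangent to the links $\{r=\mathrm{const}\}$, with $[r\partial_r,\xi]=0$. Let $T=\overline{\exp(\R\xi)}$ be the closure of the Reeb flow inside $\mathrm{Aut}(C,J_0)\cap\mathrm{Isom}(C,g_0)$; this is a compact real torus acting holomorphically and commuting with the scaling, and its complexification $T_{\C}\cong(\C^*)^k$ acts holomorphically on $(C,J_0)$ with the scaling $\R^+$ sitting inside $T_{\C}$ as the one-parameter subgroup generated by $-J_0\xi$.

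Next I would reduce to the quasiregular case. The Reeb field $\xi$ lies in $\mathfrak{t}=\mathrm{Lie}(T)$, and the elements of $\mathfrak t$ that arise as Reeb fields of cone metrics on the \emph{fixed} complex manifold $(C,J_0)$ form an open convex set (the Sasaki cone): moving within it changes only the metric $g_0$ and the radius function, not $(C,J_0)$ or the $T_{\C}$-action. Rational directions are dense, so I may perturb $\xi$ to a quasiregular Reeb field generating a locally free $S^1\subset T$ while keeping $(C,J_0)$ unchanged. Establishing this insensitivity of the complex manifold to the deformation is the step that requires the most care, and is where I expect the main difficulty to lie.

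In the quasiregular case the link $L=\{r=1\}$ carries a locally free $S^1$-action, so $B=L/S^1$ is a compact complex orbifold and $C$ is biholomorphic to the complement of the zero section in the total space of an orbifold line bundle $\mathcal{L}^{-1}$ over $B$. The K\"ahler condition makes the transverse K\"ahler form a positive $(1,1)$-form on $B$ representing $c_1(\mathcal L)$, so $\mathcal L$ is an \emph{ample} orbifold line bundle; by Baily's orbifold version of the Kodaira embedding theorem, $B$ is projective and $\mathcal L^m$ is very ample for some $m$. Although $B$ may carry orbifold points, the total space $L$, and hence $C$, is a genuine manifold, so no singularities appear away from the vertex.

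Finally I would read off $V$ from the section ring $R=\bigoplus_{k\ge 0}H^0(B,\mathcal L^{k})$, putting $V=\mathrm{Spec}\,R$. Ampleness of $\mathcal L$ makes $R$ a finitely generated graded $\C$-algebra, and a choice of homogeneous generators $f_1,\dots,f_N$ of positive degrees $w_1,\dots,w_N$ realises $V$ as a closed subvariety of $\C^N$ invariant under $(t,z)\mapsto(t^{w_1}z_1,\dots,t^{w_N}z_N)$ with all $w_i>0$; negative weights cannot occur because a holomorphic eigenfunction of negative scaling weight would blow up as $r\to0$. Passing to a large power of $\mathcal L$ so that the embedding is projectively normal makes $R$ integrally closed, hence $V$ normal, with the vertex $0$ as its unique singular point and $C\cong V\setminus\{0\}$ its smooth part. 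Besides the Reeb deformation of the second step, the only other point needing genuine work is the normality of $V$ via projective normality of the orbifold embedding; the remaining identifications are routine.
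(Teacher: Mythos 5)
The paper itself offers no proof of Theorem \ref{t:affine}: it defers entirely to \cite[\S 3.1]{vanC4}, and your outline follows essentially the same route as that reference --- complexify the torus generated by the Reeb flow, deform the Reeb field inside the Sasaki cone to a quasiregular one without changing the underlying complex manifold $(C,J_0)$ (this step is indeed standard: such ``type I'' deformations change only $r$ and $g_0$), pass to the projective orbifold $B=L/S^1$, and realise $C\cup\{0\}$ as $\operatorname{Spec}$ of the section ring $R=\bigoplus_{k\ge 0}H^0(B,\mathcal{L}^k)$ of the ample orbifold line bundle $\mathcal{L}$. So the strategy is the right one and matches the source the paper cites.

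Two steps do not close as written. First, the normality argument: replacing $\mathcal{L}$ by $\mathcal{L}^m$ replaces $R$ by its Veronese subring $R^{(m)}$ and $V=\operatorname{Spec}R$ by the quotient $V/\mu_m$, whose punctured cone is $C/\Z_m$ rather than $C$; projective normality of the embedding of $B$ by $|\mathcal{L}^m|$ therefore establishes normality of the wrong ring. The correct (and easier) argument is intrinsic: $R$ coincides with the ring of polynomial-growth (equivalently, $T_{\C}$-finite) holomorphic functions on $C$, so an element of the fraction field that is integral over $R$ is locally bounded, extends holomorphically over the smooth (in particular normal) space $C$, and is again $T_{\C}$-finite, hence already lies in $R$; thus $R$ is integrally closed. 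Second, in the genuinely orbifold case no single power $\mathcal{L}^m$ separates points and tangent vectors of $C$: at a point of $L$ with isotropy $\Z_{m_p}$ one needs sections of $\mathcal{L}^k$ for $k$ running over several residue classes modulo $m_p$. Baily's theorem as you invoke it only gives projectivity of $B$; to conclude that finitely many homogeneous generators of $R$ of assorted degrees give an injective immersion of $C$ (rather than of some finite quotient) onto $V\setminus\{0\}$, you must use the stronger fact that the full section ring separates points and tangent directions on the punctured total space of $\mathcal{L}^{-1}$. Both points are standard and repairable, but they are precisely where the nontrivial content of the cited argument sits.
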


This can be deduced from arguments written down by van Coevering in \cite[\S 3.1]{vanC4}.

\begin{definition}\label{d:cycone}
We say that $(C,g_0,J_0,\Omega_0)$ is a \emph{Calabi-Yau cone} if
\begin{enumerate}[label=\textnormal{(\roman{*})}, ref=(\roman{*})]
\item $(C,g_0, J_0)$ is a Ricci-flat K\"ahler cone of complex dimension $n$, 
\item the canonical bundle $K_{C}$ of $C$ with respect to $J_0$ is trivial, and
\item $\Omega_{0}$ is a nowhere vanishing section of $K_{C}$ with $\omega_0^n = i^{n^2}\Omega_0 \wedge \bar{\Omega}_0$.
\end{enumerate}
\end{definition}

We defer any discussion of Ricci-flat {K\"ahler} manifolds with torsion canonical bundle to \cite{ch2}.
  
\subsubsection{Calabi ansatz}\label{s:calans} The construction of Calabi-Yau cones, or Sasaki-Einstein manifolds, is in itself a highly nontrivial problem. See Sparks \cite{Sparks-survey} for an excellent recent survey.

 The most elementary construction, originating {in} Calabi's paper \cite{Cal1}, states that \emph{regular} Calabi-Yau cones are classified by K{\"a}hler-Einstein Fano manifolds; see LeBrun \cite[Proposition 3.1]{ Lebrun}. This idea will be useful for us in Section \ref{smoothing}, but see also Example \ref{ex:calabi1} for Calabi's original application.

The essence of the Calabi ansatz can be summarized as follows: If $D$ is K{\"a}hler-Einstein Fano, then for every integer $k > 0$ dividing $c_1(D)$, there exists a regular Calabi-Yau cone structure on the space $(\frac{1}{k}K_{D})^{\times}$, the blowdown of the zero section of the $k$-th root of $K_{D}$. If $D$ has dimension $n-1$, then the radius function of the Ricci-flat cone metric is given by $r^{n} = \norm{\cdot}_k^k$, where $\norm{\cdot}_k$ denotes the Hermitian norm on $\frac{1}{k}K_D$ naturally induced from the K\"ahler-Einstein metric on $D$.

\begin{definition}
The \emph{Fano index} of a Fano manifold $D$ is the divisibility of $K_D$ in ${\rm Pic}(D)$. 
\end{definition}

This is often $1$ and never more than $n$, with equality iff $D = \P^{n-1}$. The quadric in $\P^n$ ($n > 2$) is the only Fano of index $n-1$, and the cases of index $n-2$ {and} $n-3$ are classified as well \cite{AG5}.

\subsubsection{Asymptotically conical Riemannian manifolds} 

\begin{definition}\label{d:AC}
Let $(M,g)$ be a complete Riemannian manifold and let $(C,g_0)$ be a Riemannian cone. We call $M$ \emph{asymptotically conical} (AC) with tangent cone $C$ if there exists a diffeomorphism $\Phi: C\setminus K \to M \setminus K'$ with $K,K'$ compact, such that $\Phi^*g - g_0 = O(r^{-\epsilon})$ with $g_0$-derivatives for some $\epsilon > 0$. A \emph{radius function} is a smooth function $\rho: M \to [1,\infty)$ with $\Phi^*\rho = r$ away from $K'$.
\end{definition}

We implicitly only allow for one end in this definition. This is simply to fix ideas and because, {by the splitting theorem}, AC manifolds with ${\rm Ric} \geq 0$ can only {ever} have one end anyway.

The notions of {an} AC K{\"a}hler and {an} AC Calabi-Yau manifold are self-explanatory, except for the
{caveat} that we require a Calabi-Yau to have trivial canonical bundle, just as in Definition \ref{d:cycone}.

\subsubsection{K{\"a}hler classes on open complex manifolds} 
In this paper, we are really only interested in AC K{\"a}hler manifolds. These are $1$-convex, hence biholomorphic to resolutions of affine analytic varieties with at worst finitely many {normal} singularities; see Appendix \ref{s:ddbar}.

 It seems to be useful to think of a K{\"a}hler class on a $1$-convex manifold $M$ simply as a class in the de Rham group $H^2(M)$ that contains positive $(1,1)$-forms. For all we know, these classes may very well be characterized by the inequalities
$\int_Y \omega^k > 0$ for all $k$-dimensional ($k > 0$) compact analytic subsets $Y \subset M$, as is of course the case on compact K{\"a}hler manifolds \cite{DP}.

Our construction of AC Calabi-Yau metrics relies on a slightly more restrictive notion of K{\"a}hler 
class than this; compare Definition \ref{d:acsupp}. However, in all the examples that we discuss in this paper, this more restrictive definition turns out to in fact agree with the naive one above.

Let us finally remark that, in all the AC examples of interest here, the link $L$ satisfies $H^1(L) = 0$. In particular, by a standard long exact sequence, (\ref{e:basic_LES}), $H^2_c(M)$ injects as a subspace into $H^2(M)$. {Thus}, there is no ambiguity in talking about compactly supported classes in $H^2(M)$.

\subsection{Acknowledgments} This paper developed out of a part of RJC's doctoral dissertation \cite{RonanThesis} at Imperial College London. He would like to thank his supervisor, Mark Haskins, for his guidance and constant support and encouragement, and Gilles Carron and Dominic Joyce for useful remarks on a preliminary version of {his} thesis. He also wishes to acknowledge support from a Britton postdoctoral fellowship which he held at McMaster University while carrying out part of this research. HJH would like to acknowledge postdoctoral support under EPSRC Leadership Fellowship EP/G007241/1. We thank MPIM and HIM Bonn for excellent working conditions during our stay in Bonn in Fall 2011, the members of the Imperial geometry group for many helpful conversations, and Jeff Cheeger and Gang Tian for their willingness to discuss \cite{Cheeger}. Finally, we must acknowledge an intellectual debt of gratitude to Craig van Coevering, whose articles \cite{vanC, vanC3, vanC2, vanC4} inspired much of our research.
\newpage
\section{Existence for the complex Monge-Amp{\`e}re equation}\label{s:existence}

\subsection{Overview} We prove an abstract existence result for the complex Monge-Amp{\`e}re equation on AC K{\"a}hler manifolds. The existence of AC Calabi-Yau metrics can be deduced as a corollary.

The statement of the analytic result is as follows. We use standard notation concerning weighted H{\"o}lder spaces; cf.~Section \ref{s:linear_analysis}. Parts (i) and (ii) can be assembled from the literature, as we explain in Section \ref{s:fast_decay}. Part (iii) is new, and will be proved in Section \ref{s:slow_decay} by reduction to Part (ii).

\begin{theorem}\label{t:existence}
Let $(M, g, J)$ be an {\rm AC} K{\"a}hler manifold of complex dimension $n \geq 2$ with tangent cone $(C, g_0, J_0)$. Let $\omega$, $\omega_0$ denote the associated K{\"a}hler forms. {Denote by} $\mathcal{P}$ the set of exceptional weights of the scalar Laplacian as defined in {\rm (\ref{hothead})}, and {let $\rho$ be} a radius function on $M$. We wish to solve the complex Monge-Amp{\`e}re equation $(\omega + i\partial\bar{\partial}u)^n = e^f\omega^n$, where $f \in C^\infty_\beta(M)$.
\begin{enumerate}
\item If $\beta \in (-2n-\delta,-2n)$, $0 < \delta \ll 1$, then there exists a unique solution $u \in \R \rho^{2-2n} \oplus C^\infty_{\beta+2}(M)$.

\item If $\beta \in (-2n,-2)$, then there exists a unique solution $u \in C^\infty_{\beta+2}(M)$.

\item If $\beta \in (-2,0)$ and $\beta + 2 \not\in \mathcal{P}$, then there exists a solution $u \in C^\infty_{\beta+2}(M)$.
\end{enumerate}
\end{theorem}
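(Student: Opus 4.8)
The plan is to solve the complex Monge–Ampère equation $(\omega+i\partial\bar\partial u)^n = e^f\omega^n$ for each regime of $\beta$ by a continuity/fixed-point scheme anchored on the linear theory for the Laplacian $\Delta_\omega$ on weighted Hölder spaces. The crucial input is the mapping property of the Laplacian: on an AC manifold, $\Delta_\omega\colon C^\infty_{\beta+2}(M)\to C^\infty_\beta(M)$ is an isomorphism precisely when $\beta+2$ avoids the exceptional set $\mathcal{P}$ (the indicial roots coming from the link Laplacian on the cone), and the kernel/cokernel jump across these critical rates is governed by the homogeneous harmonic functions $r^\lambda\phi$ on $C$. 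I would first record this Fredholm/isomorphism statement as the linear backbone, since the whole argument is a nonlinear perturbation of it.

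For part (ii), where $\beta\in(-2n,-2)$, the target weight $\beta+2\in(-2n+2,0)$ sits in a gap where $\Delta_\omega$ is an isomorphism onto the relevant space; this is the ``generic'' case and should follow from the standard a priori estimate machinery (Yau's $C^0$, $C^2$, and $C^{2,\alpha}$ estimates, adapted to the weighted setting) together with the method of continuity, giving existence \emph{and} uniqueness. Part (i), with $\beta\in(-2n-\delta,-2n)$, is just below the threshold where $\rho^{2-2n}$ becomes harmonic of exactly the borderline rate: here the linear operator is no longer surjective onto $C^\infty_\beta$, but its cokernel is one-dimensional, spanned by the obstruction dual to the class of $\rho^{2-2n}$. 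This is exactly why the solution space is enlarged to $\R\rho^{2-2n}\oplus C^\infty_{\beta+2}(M)$ — the extra $\R$-factor absorbs the single obstruction, and the decomposition is then an isomorphism; uniqueness persists because the kernel is still trivial at this weight.

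\emph{Part (iii)} is the genuinely new content, and the instruction is that it should be proved \emph{by reduction to Part (ii)}. So here the strategy is not to redo the analysis from scratch but to trade a slowly-decaying right-hand side for a fast-decaying one by subtracting off an explicit approximate solution. Given $f\in C^\infty_\beta(M)$ with $\beta\in(-2,0)$ and $\beta+2\notin\mathcal{P}$, I would first solve the \emph{linearized} problem $i\partial\bar\partial v = $ (leading part of $f\,\omega^n/n$) using harmonic/pluriharmonic model solutions of rate $\beta+2\in(0,2)$ on the cone, pulled back via $\Phi$ and cut off; the point is that $\beta+2$ lies below $2$, so the leading asymptotics are controlled by sub-quadratic harmonic data, and the new pluriharmonicity idea (Corollary \ref{c:subquadratic}) guarantees these model solutions behave well. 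Subtracting $i\partial\bar\partial v$ from $\omega$ and re-expanding the Monge–Ampère operator produces a new equation whose right-hand side now decays faster than $-2n$, i.e.\ lands in the regime of Part (ii), and one applies (ii) to finish. Note that (iii) asserts only \emph{existence}, not uniqueness: this is the honest signature of the slow-decay regime, where $\Delta_\omega$ has a nontrivial kernel (the sub-quadratic harmonic functions), so one cannot expect a unique solution without the separate pluriharmonicity analysis that the uniqueness section supplies.

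The main obstacle, I expect, is the reduction step in (iii): one must construct the approximate solution $v$ so that $\omega+i\partial\bar\partial v$ remains a genuine (positive) Kähler form — not merely a closed $(1,1)$-form — and so that the error in the Monge–Ampère equation decays strictly faster than the threshold $-2n$ rather than merely matching it. Controlling the nonlinear remainder terms $\sum_{j\ge 2}\binom{n}{j}\omega^{n-j}\wedge(i\partial\bar\partial v)^j$ and verifying they all lie in $C^\infty_{<-2n}(M)$ requires that the rate bookkeeping be tight, since each factor of $i\partial\bar\partial v$ contributes rate $\beta<0$ and one must check that positivity of $\beta+2$ and the quadratic gain from the product structure combine to clear $-2n$. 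The avoidance condition $\beta+2\notin\mathcal{P}$ is precisely what makes the linear solve for $v$ well-posed, so the bulk of the work is propagating that single nondegeneracy through the nonlinear estimate.
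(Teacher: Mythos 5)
Your treatment of parts (i) and (ii) matches the paper's in substance: (ii) is the continuity method with weighted versions of Yau's estimates (the paper anchors the $C^0$ step on a Sobolev inequality and Moser iteration with weights), and (i) is deduced from (ii) by solving at a slightly weaker weight and then improving the asymptotics to $const\cdot r^{2-2n}+O(r^{\beta+2})$ via Green's function estimates; your ``one-dimensional cokernel at the exceptional weight $2-2n$'' picture is the same linear phenomenon phrased abstractly.

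The gap is in your reduction for (iii). You claim that subtracting a single approximate solution $i\partial\bar{\partial}v$ (with $\Delta v$ matching $2f$ to leading order) produces a new right-hand side decaying faster than $-2n$. It does not. If $v\in C^\infty_{\beta+2}(M)$, then $i\partial\bar{\partial}v\in C^\infty_{\beta}(M)$; the linear term in the expansion of $(\omega+i\partial\bar{\partial}v)^n$ cancels $f$, but the quadratic and higher terms leave an error of rate exactly $2\beta\in(-4,0)$. For $\beta\in(-2,0)$ and $n\geq 2$ one can never have $2\beta<-2n$, so your own consistency requirement (``verifying they all lie in $C^\infty_{<-2n}$'') is unsatisfiable in one step; moreover the threshold you should be aiming for is $-2$, not $-2n$, since Part (ii) covers the whole window $\beta\in(-2n,-2)$ and overshooting past $-2n$ would land you in regime (i) instead. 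The paper's resolution (Lemma \ref{prop:main} and the discussion following it) is an \emph{iteration}: each round solves the global Poisson equation $\Delta\hat{u}_1=2f$ using the surjectivity of $\Delta:C^\infty_{\beta+2}(M)\to C^\infty_{\beta}(M)$ from Theorem \ref{surjectivelaplacian} (this is precisely where $\beta+2\notin\mathcal{P}$ enters), cuts off the potential at a large radius $R$ so that $\sup|i\partial\bar{\partial}(\eta_R\hat{u}_1)|=O(R^{\beta})\to 0$ (which settles the positivity worry you raise), and replaces an error of rate $\beta$ by one of rate $2\beta$, with an arbitrarily small loss $\epsilon$ inserted at later stages to dodge exceptional weights; one repeats until the rate drops below $-2$ and only then invokes Part (ii). Without this iteration, and with the threshold $-2n$ in place of $-2$, your argument for (iii) does not close. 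A secondary point: the paper does not need cone model solutions or the pluriharmonicity results of Section \ref{s:uniqueness} for the existence statement (iii); a direct global linear solve on $M$ suffices.
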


\begin{remark}\label{r:existence}
(i) In the first case, the constant in front of $\rho^{2-2n}$ can be determined by integrating the identity $(e^f - 1)\omega^n = \frac{1}{2}(\Delta u)\omega^n + {n \choose 2} (i\partial\bar{\partial}u)^2 \wedge \omega^{n-2} + ... + (i\partial\bar{\partial}u)^n$ over $M$.

(ii) If $\Ric(g_0) \geq 0$, then we can take $\delta = 1$; compare Remark \ref{r:ricci_weights} below.
Moreover, if ${\rm Ric}(g) \geq 0$ on $M$ and $\beta \in (-2n,-2)$, then our work in Section \ref{s:uniqueness} implies that $u \in C^\infty_{\beta + 2}(M)$ is in fact the only solution in $C^\infty_{2-\epsilon}(M)$ for  all $0 < \epsilon \leq |\beta|$ up to adding on globally defined pluriharmonic functions.
\end{remark}

We now explain the consequences for the existence of Calabi-Yau metrics. This result encompasses all
previous such results in the literature, and in fact more. We will prove it in Section \ref{s:cy_ex_proof}.

\begin{definition}\label{d:acsupp}
Let $M$ be an open complex manifold, $K \subset M$ a compact set, $C = \R^+ \times L$ a cone with cone metric $g_0$, and $\Phi: (1,\infty) \times L \to M\setminus K$ a diffeomorphism. A class in $H^2(M)$ is said to be a \emph{$\mu$-almost compactly supported K{\"a}hler class} for some given $\mu < 0$ if it can be represented by a K{\"a}hler form $\omega$ on $M$ such that $\omega - \xi = d\eta$ on $M \setminus K$, with {$\eta$} a smooth real $1$-form on $M\setminus K$ and {$\xi$} a smooth real $(1,1)$-form on $M \setminus K$ such that $\Phi^*\xi = O(r^{\mu})$ with $g_0$-derivatives.
\end{definition}

\begin{theorem}\label{thm:main}
Let $(M,J)$ be an open complex manifold of complex dimension $n > 2$ such that $K_M$ is trivial.
Let $\Omega$ be a nowhere vanishing holomorphic volume form on $M$. Let $L$ be Sasaki-Einstein with associated Calabi-Yau cone $(C, g_0, J_0, \Omega_0)$ and radius function $r$. Suppose that there exist $\lambda < 0$, a compact subset $K \subset M$, and a diffeomorphism $\Phi: (1,\infty) \times L \to M\setminus K$ such that
\begin{equation} \Phi^{*}\Omega-\Omega_{0} =O(r^{\lambda})\end{equation}
with $g_0$-derivatives. Let $\mathfrak{k} \in H^2(M)$ be a $\mu$-almost compactly supported K{\"a}hler class. Assume that
\begin{equation}\nu := \max\{\lambda,\mu\}  \not\in \{-2n, -2, \nu_1-2, ..., \nu_k-2\},\end{equation} with
$\nu_1, ..., \nu_k$ the possible growth rates in $(1, 2)$ of pluriharmonic functions
on $C$. Then for all $c> 0$, there exists an {\rm AC} Calabi-Yau metric $g_c$ on $M$ whose associated K\"ahler form $\omega_c$ lies in $\mathfrak{k}$ and
\begin{equation}
\Phi^*\omega_c - c\omega_0 = O(r^{\max\{-2n,\,\nu\}})
\end{equation}
with $g_0$-derivatives.
Moreover, if $\nu < -2n$, then there exists $\epsilon > 0$ such that
\begin{equation}\Phi^*\omega_c - c\omega_0 = const \cdot i\partial\bar{\partial}r^{2-2n} + O(r^{-2n-1-\epsilon}).\end{equation}
\end{theorem}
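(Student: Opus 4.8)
The plan is to reduce the statement to the abstract Monge--Amp\`ere existence result, Theorem \ref{t:existence}, applied to a carefully chosen reference metric. First I would build a K\"ahler form $\omega$ with $[\omega]=\mathfrak{k}$ whose pullback is asymptotic to $c\omega_0$. Since $\omega_0=\frac{i}{2}\partial\bar\partial r^2$ is exact on the cone and $\mathfrak{k}$ is $\mu$-almost compactly supported, its restriction to the end differs from an exact form only by the $(1,1)$-form $\xi$ of rate $\mu$; this is exactly the room needed to graft $c\,\Phi_*\omega_0$ onto the end, for any prescribed $c>0$, by a cutoff construction that corrects the cohomology on the compact part, arranging $\Phi^*\omega-c\omega_0=O(r^{\mu})$ with $g_0$-derivatives while $[\omega]=\mathfrak{k}$ and $\omega>0$. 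I then look for $\omega_c=\omega+i\partial\bar\partial u$ solving $\omega_c^n=c^n\,i^{n^2}\Omega\wedge\bar\Omega$; the standard computation $\Ric(\omega_c)=-i\partial\bar\partial\log\!\big(\omega_c^n/(i^{n^2}\Omega\wedge\bar\Omega)\big)=0$, valid because $\Omega$ is a nowhere-vanishing holomorphic volume form, then makes $g_c$ Ricci-flat K\"ahler, hence Calabi--Yau since $K_M$ is trivial.

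Next I would compute the rate of the right-hand side. Writing the equation as $(\omega+i\partial\bar\partial u)^n=e^f\omega^n$ with $e^f=c^n\,i^{n^2}\Omega\wedge\bar\Omega/\omega^n$, and using $\Phi^*\Omega=\Omega_0+O(r^{\lambda})$, $\Phi^*\omega=c\omega_0+O(r^{\mu})$, together with the defining relation $\omega_0^n=i^{n^2}\Omega_0\wedge\bar\Omega_0$ on the cone, the numerator and denominator of $\Phi^*e^f$ are each $c^n\omega_0^n\big(1+O(r^{\max\{\lambda,\mu\}})\big)$, so $f\in C^\infty_\nu(M)$ with $\nu=\max\{\lambda,\mu\}$ (the $g_0$-derivative bounds propagate to derivative bounds on $f$).

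Now apply Theorem \ref{t:existence} with $\beta=\nu$; the three excluded values are precisely the boundaries and exceptional weight of that theorem. If $\nu<-2n$ we are in case (i), choosing $\beta$ just below $-2n$ (legitimate since $C^\infty_\nu\subseteq C^\infty_\beta$ for $\nu\le\beta$), which gives $u\in\R\rho^{2-2n}\oplus C^\infty_{\beta+2}$. If $\nu\in(-2n,-2)$ we are in case (ii), with $u\in C^\infty_{\nu+2}$. If $\nu\in(-2,0)$ we are in case (iii), where the hypothesis $\nu\neq\nu_0-2$ is exactly $\nu+2\notin\mathcal P$ (by Remark \ref{r:existence}(ii), $\mathcal P\cap(0,2)$ is either empty or the single pluriharmonic rate $\nu_0$), again giving $u\in C^\infty_{\nu+2}$. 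In the last two cases $i\partial\bar\partial u=O(r^{\nu})$, so $\Phi^*\omega_c-c\omega_0=(\Phi^*\omega-c\omega_0)+\Phi^*(i\partial\bar\partial u)=O(r^{\mu})+O(r^{\nu})=O(r^{\max\{-2n,\,\nu\}})$ with $g_0$-derivatives furnished by the weighted estimates; the discrepancy $\Phi^*J-J_0=O(r^{\lambda})$ is absorbed since $\lambda\le\nu$, and positivity of $\omega_c$ is part of the Monge--Amp\`ere solution.

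It remains to treat the moreover, when $\nu<-2n$. Here case (i) already isolates $u=a\rho^{2-2n}+v$ with $v\in C^\infty_{\beta+2}$, so that $\Phi^*(i\partial\bar\partial u)=a\,i\partial\bar\partial r^{2-2n}+O(r^{\beta})$, the first term being the asserted $\mathrm{const}\cdot i\partial\bar\partial r^{2-2n}$ (of rate exactly $-2n$, consistent with $O(r^{\max\{-2n,\nu\}})=O(r^{-2n})$). To sharpen the remainder from $O(r^{-2n-\epsilon})$ to $O(r^{-2n-1-\epsilon})$ I would subtract this leading term and re-examine the equation satisfied by $v$: since $i\partial\bar\partial r^{2-2n}$ annihilates the linearised operator to top order and $f$ itself decays strictly faster than $r^{-2n}$, the effective source for $v$ lies in a weighted space whose weight can be lowered past $2-2n$, and the discreteness of $\mathcal P$ (governed by the link-Laplacian spectrum) provides a gap of more than one below $2-2n$, yielding the extra power. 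The two genuinely delicate points are this refined asymptotic expansion and the initial grafting of $c\omega_0$ for arbitrary $c$ using the almost-compact-support structure; the intermediate steps are bookkeeping once Theorem \ref{t:existence} is granted.
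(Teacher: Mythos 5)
Your overall strategy coincides with the paper's: build a reference K\"ahler form $\hat{\omega}\in\mathfrak{k}$ equal to $\xi+ci\partial\bar{\partial}r^2$ near infinity, check that its Ricci potential lies in $C^\infty_\nu(M)$, and feed the resulting Monge--Amp\`ere equation into Theorem \ref{t:existence}, with the case split over $\nu$ exactly as you describe. The reduction step and the rate bookkeeping are fine. The genuine gap is the step you defer as ``delicate'': the grafting of $ci\partial\bar{\partial}r^2$ onto the end for \emph{every} $c>0$ while keeping positivity and the class $\mathfrak{k}$. A naive cutoff does not work, for two separate reasons. First, the almost-compact-support hypothesis only gives $\omega-\xi=d\eta$ on $M\setminus K$ for a $1$-form $\eta$; cutting off $\eta$ produces $d(\zeta\eta)$, which is not of type $(1,1)$, so one must first upgrade $d$-exactness to $i\partial\bar{\partial}$-exactness on the end. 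This is Corollary \ref{c:ideldelbar}(ii), and it is precisely where the hypothesis $n>2$ enters --- your proposal never uses $n>2$ at all, which is a warning sign. Second, once one writes $\omega-\xi=-i\partial\bar{\partial}u$ and forms $\omega+i\partial\bar{\partial}(\zeta u)+ci\partial\bar{\partial}h_1$, the cutoff error terms in the transition annulus are of size comparable to $u$ and its derivatives and can destroy positivity there, especially for small $c$; the paper's fix is to add $Ci\partial\bar{\partial}((1-\zeta_S)h_\alpha)$ with $h_\alpha$ a strictly plurisubharmonic exhaustion asymptotic to $r^{2\alpha}$, $\alpha\in(0,1)$, with $C$ large and the second cutoff scale $S$ chosen much larger so that $h_\alpha$ is eventually dominated by $ch_1$. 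This two-scale interpolation with an intermediate-growth potential is a genuine idea, not bookkeeping, and your proposal does not contain a substitute for it. Relatedly, the existence of the strictly plurisubharmonic functions $h_\alpha$ and the estimate $i\partial\bar{\partial}r^2-\omega_0=O(r^\lambda)$ both rest on $\Phi^*J-J_0=O(r^\lambda)$, which you assert in passing but which must be \emph{derived} from the hypothesis $\Phi^*\Omega-\Omega_0=O(r^\lambda)$ (Lemma \ref{complexasym}, a linear-algebra graph argument); the theorem's hypotheses say nothing directly about $J$.

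Two smaller points. In the ``moreover'' part, the gap of more than one below the weight $2-2n$ is not a consequence of mere discreteness of $\mathcal{P}$: discreteness gives some gap, but the quantitative statement $\mathcal{P}\cap[1-2n,1]=\{2-2n,0\}$ comes from the Lichnerowicz--Obata theorem applied to the Ricci-nonnegative link, i.e.\ from Remark \ref{r:ricci_weights}. Also, your displayed equation for $\omega_c$ should be normalized as $(\hat{\omega}+i\partial\bar{\partial}\hat{u})^n=e^{\hat{f}}\hat{\omega}^n$ with $\hat{f}=\log\bigl(i^{n^2}\Omega\wedge\bar{\Omega}/(\hat{\omega}/c)^n\bigr)$; the factor $c^{-n}$ is what makes $\hat{f}$ decay, since $\hat{\omega}\to c\omega_0$ while $\omega_0^n=i^{n^2}\Omega_0\wedge\bar{\Omega}_0$.
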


\begin{remark}
If $\omega - \xi = d\eta$ globally on $M$ in Definition \ref{d:acsupp}, then we only need Part (i) of Corollary \ref{c:ideldelbar} for the proof of Theorem \ref{thm:main} rather than the more difficult Part (ii). In particular, the proof then works for $n = 2$ as well. This is the case in our applications to quasiprojective manifolds \cite{ch2}.
\end{remark}

\begin{remark}\label{r:crem}
The parameter $c$ corresponds to scaling if $(M,J)$ is Stein and $\mathfrak{k} = 0$, and to the flow of a holomorphic vector field if $(M,J)$ is a crepant resolution of the cone $(C,J_0)$; cf.~Corollary \ref{c:whatisc}. However, we do not know whether or not $c$ is caused by scaling and diffeomorphism in general.
\end{remark}

\subsection{Linear analysis}\label{s:linear_analysis}
We require a definition of weighted H\"older spaces.

\begin{definition}
Let $(M, g)$ be AC with tangent cone $(C,g_{0})$, and let $\rho$ be a radius function.

(i) For $\beta\in\R$ and $k$ a nonnegative integer, define $C_{\beta}^{k}(M)$ to be the space of continuous functions $u$ on $M$ with $k$ continuous derivatives such that $$\norm{u}_{C^{k}_{\beta}} :=\sum_{j=0}^{k}\sup_{M}|\rho^{j-\beta}\nabla^{j}u| < \infty.$$
Define $C_{\beta}^{\infty}(M)$ to be the intersection of the $C_{\beta}^{k}(M)$ over all $k\in \N_0$.

(ii) Let $\delta(g)$ be the convexity radius of $g$, and write $d(x,y)$ for the distance between two points $x$ and $y$ in $M$. For $T$ a tensor field on $M$ and $\alpha,\gamma\in\R$, define
$$[T]_{C^{0,\alpha}_\gamma} :=\sup
_{\substack{x\,\neq\,y\,\in\,M \\
d(x,y)\,<\,\delta(g)}}\left[\min(\rho(x),\rho(y))^{-\gamma}\frac{|T(x)-T(y)|}{d(x,y)^{\alpha}} \right] ,$$
where $|T(x)-T(y)|$ is defined via parallel transport along the minimal geodesic from $x$ to $y$.

(iii) For $\beta\in\R$, $k$ a nonnegative integer, and $\alpha\in(0,1)$, define the weighted H\"older space $C_{\beta}^{k,\alpha}(M)$ to be the set of $u\in C_{\beta}^{k}(M)$ for which the norm $$\norm{u}_{C_{\beta}^{k,\alpha}}:=\norm{u}_{C^{k}_{\beta}} +[\nabla^{k}u]_{C^{0,\alpha}_{\beta-k-\alpha}} < \infty.$$
\end{definition}

\begin{remark}
Whether one decides to measure the asymptotics of a function $u\in C_{\beta}^{k}(M)$ in terms of the metric $g$ or $g_{0}$ actually makes no difference.
\end{remark}

Let $(C,g_{0})$ be a Riemannian cone with $\dim_\R C = m > 2$ {and} with link $L$, {and} let $(M^m,g)$ be AC with tangent cone $(C,g_{0})$. Consider the Laplacian $\Delta$ on functions derived from $g$. Then
\begin{equation}\label{laplacian}
\Delta:C^{k+2,\alpha}_{\beta+2}(M)\to C_{\beta}^{k,\alpha}(M).
\end{equation}
The main point of defining weighted H{\"o}lder spaces is that there exists a satisfactory elliptic theory associated with the mapping (\ref{laplacian}), which we now summarize. Our reference is Marshall \cite{Marshall}; a major earlier paper by Lockhart-McOwen \cite{Lockhart} develops the theory in weighted Sobolev spaces.

\begin{theorem}[{\cite[Theorem 6.10]{Marshall}}]\label{exceptional}
Define
the set of exceptional weights \begin{equation}\label{hothead}
\mathcal{P}:=\left\{-\frac{m-2}{2} \pm \sqrt{\frac{(m-2)^2}{4} +\mu}
:\mu\geq 0\;\textrm{is an eigenvalue of $\Delta_{L}$}\right\}.
\end{equation}
Then the operator \eqref{laplacian} is Fredholm if and only if $\beta+2\not\in\mathcal{P}$.
\end{theorem}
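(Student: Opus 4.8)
The plan is to reduce the Fredholm problem on $M$ to a model problem on the exact cone and then invoke the theory of translation-invariant elliptic operators on cylinders, following Lockhart--McOwen \cite{Lockhart} and Marshall \cite{Marshall}. The set $\mathcal{P}$ should emerge as the indicial roots of the model operator, and Fredholmness should hold exactly when the weight avoids these roots.

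First I would identify $\mathcal{P}$ by separation of variables. On the exact cone $(C,g_0)$ the Laplacian reads $\Delta_{g_0}=\partial_r^2+\frac{m-1}{r}\partial_r+r^{-2}\Delta_L$. Expanding in an $L^2(L)$-orthonormal basis of eigenfunctions $\phi_j$ of $\Delta_L$ with eigenvalue $\mu_j\geq 0$, the homogeneous equation $\Delta_{g_0}u=0$ decouples into the radial Euler equations $u_j''+\frac{m-1}{r}u_j'-\frac{\mu_j}{r^2}u_j=0$, whose indicial exponents are exactly $-\frac{m-2}{2}\pm\sqrt{\frac{(m-2)^2}{4}+\mu_j}$. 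These are precisely the elements of $\mathcal{P}$, i.e.\ the rates $\lambda$ for which $r^\lambda\phi_j$ is a homogeneous harmonic function on $C$.

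Next I would analyze the model operator. Substituting $t=\log r$ turns the conical end into a cylinder over $L$ and, after conjugation by a suitable power of $r$, converts $\Delta_{g_0}$ into a translation-invariant operator; under this change the weight $\beta+2$ becomes an exponential weight in $t$, and the normalization by $r^2$ accounts for the shift between $\beta$ and $\beta+2$. Taking a Fourier--Laplace transform in $t$ produces a holomorphic family $\Delta_L+P(\zeta)$ of elliptic operators on the compact link $L$, invertible for all $\zeta$ off a discrete set that corresponds under $\zeta\leftrightarrow\lambda$ precisely to the indicial roots $\mathcal{P}$. Writing the inverse of the model operator as a contour integral over a horizontal line $\{\mathrm{Re}\,\zeta=\mathrm{const}\}$ and shifting that line then shows that the model operator is an isomorphism on the weighted space exactly when $\beta+2\notin\mathcal{P}$. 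The delicate point, and Marshall's refinement of the $L^2$-based Lockhart--McOwen theory, is to run this argument in H\"older rather than Sobolev norms, which forces one to extract uniform Schauder estimates from the contour representation instead of relying on Plancherel.

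Finally I would transfer the result to $M$ by parametrix patching. Gluing the model inverse on the end, via a cutoff, to an interior Schauder parametrix on the compact core yields an operator $Q$ with $\Delta_g Q-\id$ and $Q\Delta_g-\id$ supported either on a compact set or built from the difference $\Delta_g-\Delta_{g_0}$. Because $M$ is AC, the coefficients of $\Delta_g-\Delta_{g_0}$ decay like $O(\rho^{-\epsilon})$, so these error terms gain both a derivative and a factor of decay and therefore factor through a compact embedding of weighted H\"older spaces (compact by Arzel\`a--Ascoli on compact sets together with the uniform vanishing of tails at infinity). Hence $\Delta_g$ is invertible modulo compact operators, i.e.\ Fredholm, whenever $\beta+2\notin\mathcal{P}$. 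For the converse I would use that the index is locally constant in $\beta$ and jumps exactly as $\beta+2$ crosses a point of $\mathcal{P}$, since the space of homogeneous solutions $r^\lambda\phi_j$ changes there, so that at an exceptional weight the range fails to be closed. The main obstacle throughout is the model-operator analysis: obtaining sharp, uniform H\"older (Schauder) estimates for the contour-integral inverse, which is exactly where the conical theory is subtler than its Sobolev counterpart.
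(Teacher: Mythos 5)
Your outline is correct and is essentially the standard Lockhart--McOwen/Marshall argument: indicial roots via separation of variables, conjugation to a translation-invariant operator on the cylinder with a contour-integral inverse (the H\"older-space refinement being the delicate point), parametrix patching using the $O(\rho^{-\epsilon})$ decay of $\Delta_g-\Delta_{g_0}$, and failure of closed range at exceptional weights for the converse. The paper gives no proof of its own here --- it cites \cite[Theorem 6.10]{Marshall} directly --- and your sketch faithfully reproduces the proof strategy of that cited source.
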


The elements of $\mathcal{P} \cap \R^+$ are precisely the possible growth rates of polynomial harmonic functions on $C$ and on $M$. For instance,
if $C = \R^m$, then $\mathcal{P} = \N_0 \cup (2-m - \N_0)$. In general, the elements of $\mathcal{P}$ are spread symmetrically about the point $\frac{2-m}{2}$ and lie outside the interval $(2-m,0)$, with $2-m$ and $0$ being exceptional. The weight $2-m$ is associated with the Green's function on $M$.

\begin{remark}\label{r:ricci_weights}
By the Lichnerowicz-Obata theorem, if $(C,g_0)$ has nonnegative Ricci curvature and is not isometric to $\R^m$, then $\mathcal{P} \cap [1-m,1] = \{2-m,0\}$. Moreover, if $(C,g_0)$ is in addition K{\"a}hler, then all of the exceptional weights in the interval $(1,2)$ are associated with pluriharmonic functions on $C$; see Corollary \ref{c:ph}.
\end{remark}

The main point for us then is the following existence result:

\begin{theorem}[{\cite[Theorem 6.10]{Marshall}}]\label{surjectivelaplacian}
If $\beta+2 \in (2-m, \infty)\setminus\mathcal{P}$, then the map \eqref{laplacian} is surjective.
\end{theorem}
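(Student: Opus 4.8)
The plan is to leverage formal self-adjointness of $\Delta$ together with the Fredholm property already recorded in Theorem \ref{exceptional}. Since $\beta + 2 \notin \mathcal{P}$, the map \eqref{laplacian} is Fredholm, so its image is closed and of finite codimension; surjectivity is therefore equivalent to the vanishing of the cokernel. Everything thus reduces to showing that the cokernel of \eqref{laplacian} is trivial precisely when the domain weight satisfies $\beta + 2 > 2-m$.

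To identify the cokernel I would pass to the pairing furnished by the $L^2$ inner product on $M$ (with respect to the volume measure). Because $\Delta$ is formally self-adjoint, the weighted duality theory identifies the cokernel at domain weight $\beta+2$ with the kernel of $\Delta$ at the \emph{conjugate} weight obtained by reflecting $\beta+2$ about the center $\frac{2-m}{2}$ of $\mathcal{P}$, namely
\[
(2-m) - (\beta + 2) = -m - \beta.
\]
Concretely, a cokernel element is a functional on $C^{k,\alpha}_\beta(M)$ annihilating $\im\Delta$; representing it by a distribution $v$ gives $\Delta v = 0$ weakly, elliptic regularity makes $v$ a genuine smooth harmonic function on $M$, and continuity of the functional forces the decay $v \in C^\infty_{-m-\beta}(M)$. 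Hence it suffices to prove that every globally defined harmonic function $v$ on $M$ with $v = O(\rho^{-m-\beta})$ vanishes identically.

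This is where the hypothesis $\beta + 2 > 2-m$ enters decisively: it is exactly equivalent to $-m-\beta < 0$, so any such $v$ tends to $0$ at infinity. Since $M$ is complete with a single end (and connected), the strong maximum principle then finishes the argument: if $v \not\equiv 0$ then $\sup_M v > 0$ or $\inf_M v < 0$, and because $v \to 0$ at infinity such an extremum is attained at an interior point, forcing $v$ to be constant and hence $0$. Therefore the conjugate kernel is trivial, the cokernel vanishes, and \eqref{laplacian} is surjective for all non-exceptional $\beta+2 \in (2-m,\infty)$. Note that this treatment is uniform in the weight: no matter how large $\beta+2$ is, the conjugate weight stays negative, so the same maximum-principle input applies.

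The Fredholm input and the maximum principle are the routine parts. The one genuinely delicate step is the second paragraph, since the dual of a weighted H\"older space is awkward to work with directly. The cleanest rigorous route is to establish the duality and the conjugate-weight identification in the weighted $L^p$ Sobolev spaces of Lockhart--McOwen \cite{Lockhart}, where the $L^p$--$L^{p'}$ pairing is transparent and the dual weight computation $\delta \mapsto 2-m-\delta$ is elementary, and only afterwards to upgrade the harmonic cokernel representative to the H\"older category by interior Schauder estimates. The main point requiring care is verifying that the H\"older and Sobolev formulations share the same exceptional set $\mathcal{P}$ and the same cokernel, so that killing the kernel in the Sobolev picture kills it in the H\"older picture; this compatibility is exactly what the machinery of \cite{Lockhart, Marshall} supplies.
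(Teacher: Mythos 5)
Your argument is correct and is essentially the paper's own: the paper cites Marshall and justifies surjectivity in one line as being "dual to the injectivity of \eqref{laplacian} for $\beta<-2$, which is clear from the maximum principle," and your proposal is precisely a fleshed-out version of that duality-plus-maximum-principle argument, including the conjugate-weight computation $(2-m)-(\beta+2)=-m-\beta<0$. Your remark that the duality is cleanest in the weighted Sobolev setting of Lockhart--McOwen before transferring to H\"older spaces is exactly the route the cited references take.
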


Using Theorem \ref{exceptional}, this is essentially dual to the fact that (\ref{laplacian}) is {injective} for
$\beta < -2$, which is in turn clear from the maximum principle. If $f \in C^{k,\alpha}_\beta(M)$ with $\beta < -m$, then
$\Delta u = f$ is of course
still solvable, but the leading term of $u$ will typically be a constant multiple of $r^{2-m}$.

\subsection{Existence if the right-hand side decays fast}\label{s:fast_decay}

We will now explain how Parts (i) and (ii) of Theorem \ref{t:existence} follow by using well-known methods. See \cite{vanC3} for a more detailed write-up.

If $M$ is ALE, i.e.~if the tangent cone is flat, both parts were proved by Joyce \cite[Theorem 8.5.1]{Joyce}. Joyce's strategy for Part (ii) carries over to the general AC case. We will thus assume for now that $\beta \in (-2n,-2)$, and explain the extra arguments needed to deal with Part (i) later. We make use of the continuity method to solve the equations
$(\omega+i\p\bar{\p}u_{t})^{n} =e^{tf}\omega^{n}$ in $C^{k,\alpha}_{\beta + 2}(M)$ for all $k$. Openness follows from Theorem \ref{surjectivelaplacian} because $\beta \in (-2n,-2)$; closedness follows from a priori estimates.

The key to these estimates is to establish the following Sobolev inequality: For each AC manifold $(M,g)$ of real dimension $m > 2$, there exists a constant $C<\infty$ such that
\begin{equation}\label{sobolev}
\left(\int_{M}|u|^{\frac{2m}{m-2}}\,d\vol\right)^{\frac{m-2}{m}}\leq C\int_{M}|\nabla u|^{2}\, d\vol
\end{equation}
for all $u \in C^\infty_0(M)$. Once this estimate is known, Moser iteration with weights yields a $C^0_{\beta + 2}$ bound on $u_t$.
Yau's method then provides
a uniform bound on $i\partial\bar{\partial}u_t$, hence on $[i\partial\bar{\partial}u_t]_{C^{0,\alpha}}$ by Evans-Krylov,
which together with the $C^0_{\beta + 2}$ bound from before can be bootstrapped into a $C^\infty_{\beta + 2}$ bound.

In the ALE case, Joyce notes that (\ref{sobolev}) can be deduced from elliptic theory in weighted Sobolev spaces; details can be found in Pacini's work \cite[\S 13]{Pac}, which in fact deals with the general AC case. Tian and Yau \cite[\S 3]{Tian} proved (\ref{sobolev}) in a number of special cases by
using the solution of the Plateau problem and the Michael-Simon Sobolev inequality. General and completely elementary proofs were given by van Coevering \cite[\S2.2]{vanC3} and by the second author \cite[Theorem 1.2]{Hein2}.

We can now explain the proof of Part (i). If
$\beta < -2n$, then Part (ii) tells us that there exists a unique solution $u$ $=$ $O_\epsilon(r^{2-2n+\epsilon})$. Thus, by expanding the Monge-Amp{\`e}re equation, $\Delta_{g_0} u = O(r^\beta)$ on $M \setminus K$ if $\beta > \max\{-4n, -2n+\lambda\}$, where $\lambda < 0$ is the rate of $M$.
In the ALE case, we can then invoke standard
Green's function estimates on $\C^n$ to deduce that $u = const \cdot r^{2-2n} + O(r^{\beta+2})$ if, in addition, $\beta > -2n-1$.
This is generalized to AC manifolds in \cite{vanC3}, using some abstract heat kernel estimates. Alternatively, one can expand the relation $\Delta_{g_0} u = O(r^\beta)$ according to eigenfunctions of
the Laplacian on each slice of the cone, and then analyze the resulting radial ODE's.

\subsection{Existence if the right-hand side decays slowly}\label{s:slow_decay}

The purpose of this section is to explain how Part (iii) of Theorem \ref{t:existence} can be reduced to Part (ii) by using the following lemma.

\begin{lemma}\label{prop:main}
Suppose that $\beta \in (-2,0)$ and $\beta + 2 \notin \mathcal{P}$. If $f \in C^\infty_\beta(M)$, then there exists a function $u_1 \in C^\infty_{\beta + 2}(M)$ such that $\omega + i\partial\bar{\partial}u_1 > 0$ and $(\omega + i\partial\bar{\partial} u_1)^n = e^{f - f_1}\omega^n$ for some $f_1 \in C^\infty_{2\beta}(M)$.
\end{lemma}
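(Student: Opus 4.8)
The plan is to construct $u_1$ as an (approximate) solution of the \emph{linearized} Monge-Amp\`ere equation and to exploit the fact that the discrepancy between the full equation and its linearization is quadratic, hence decays at rate $2\beta$ rather than $\beta$. Writing $A$ for the $\omega$-Hermitian endomorphism determined by $i\partial\bar\partial u_1$, I would use the expansion
$$\log\frac{(\omega+i\partial\bar\partial u_1)^n}{\omega^n}=\log\det(I+A)=\operatorname{tr}A+Q=\tfrac12\Delta u_1+Q,$$
where $\operatorname{tr}A=\tfrac12\Delta u_1$ (as in Remark \ref{r:existence}(i)) and $Q=\log\det(I+A)-\operatorname{tr}A=-\tfrac12\operatorname{tr}(A^2)+\tfrac13\operatorname{tr}(A^3)-\cdots$. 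Once $\tfrac12\Delta u_1=f$ holds near infinity, the defect $f_1:=f-\log\frac{(\omega+i\partial\bar\partial u_1)^n}{\omega^n}$ is governed by $Q$. Since $u_1\in C^\infty_{\beta+2}(M)$ forces $A\in C^\infty_{\beta}$, and $\operatorname{tr}(A^k)\in C^\infty_{k\beta}\subseteq C^\infty_{2\beta}$ for $k\geq2$ (using $\beta<0$ and $C^\infty_a\cdot C^\infty_b\subseteq C^\infty_{a+b}$), the series defining $Q$ converges in $C^\infty_{2\beta}$ as soon as $A$ is everywhere small, which will give $f_1\in C^\infty_{2\beta}(M)$.

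To produce $u_1$ I would solve the linear equation $\Delta u_1=2f$, but only after \emph{cutting off the source near infinity}, which is the device that makes the positivity $\omega+i\partial\bar\partial u_1>0$ attainable. Fix a cutoff $\chi_R$ that vanishes on $\{\rho\leq R\}$ and equals $1$ on $\{\rho\geq2R\}$, with $R$ large to be chosen. Since $\beta\in(-2,0)$ gives $\beta+2\in(0,2)\subseteq(2-m,\infty)$ (as $m=2n\geq4$) and $\beta+2\notin\mathcal{P}$ by hypothesis, Theorem \ref{surjectivelaplacian} yields a solution $u_1\in C^{2,\alpha}_{\beta+2}(M)$ of $\Delta u_1=2\chi_Rf$; Schauder bootstrapping upgrades this to $u_1\in C^\infty_{\beta+2}(M)$, with $\|u_1\|_{C^{2,\alpha}_{\beta+2}}\leq C\|f\|_{C^{0,\alpha}_\beta}$ for a constant $C$ independent of $R$.

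The crux is then global positivity, i.e.\ making $|i\partial\bar\partial u_1|_g$ uniformly smaller than the eigenvalues of $\omega$. At infinity this comes for free: the weighted bound gives $|i\partial\bar\partial u_1(x)|\leq C\|f\|\,\rho(x)^\beta$, which is $\leq C\|f\|(R/2)^\beta$ on $\{\rho\geq R/2\}$ and tends to $0$ as $R\to\infty$ because $\beta<0$. The interior is handled by observing that $u_1$ is \emph{harmonic} on $\{\rho\leq R\}$, where $\chi_Rf\equiv0$; since $|u_1|\leq\|u_1\|_{C^0_{\beta+2}}\rho^{\beta+2}\leq C\|f\|R^{\beta+2}$ there and $\beta+2>0$, the interior estimate for harmonic functions on balls of radius $\sim R$ gives $|i\partial\bar\partial u_1|\leq CR^{-2}\cdot\|f\|R^{\beta+2}=C\|f\|R^\beta$ on $\{\rho\leq R/2\}$. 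Combining the two regions, $|i\partial\bar\partial u_1|_g\leq C\|f\|R^\beta$ on all of $M$, so choosing $R$ large enough forces $\omega+i\partial\bar\partial u_1>0$ globally and $A$ small everywhere. Finally, on $\{\rho\geq2R\}$ one has $\tfrac12\Delta u_1=f$ and hence $f_1=-Q\in C^\infty_{2\beta}$, while in general $f_1=(1-\chi_R)f-Q$, whose first term is supported in the compact set $\{\rho\leq2R\}$ and is therefore harmlessly smooth and bounded; thus $f_1\in C^\infty_{2\beta}(M)$ overall.

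I expect the positivity step to be the main obstacle. Solving $\Delta u_1=2f$ globally would only give $|i\partial\bar\partial u_1|$ comparable to $\|f\|$, with no smallness in the interior, so $\omega+i\partial\bar\partial u_1$ could well fail to be positive there (the Hessian of a Newtonian-type potential of a large interior source need not be sign-definite). Note that scaling $\omega$ does not help, since the equation is invariant under $(\omega,u_1)\mapsto(\lambda\omega,\lambda u_1)$. The resolution is to push the source out to $\{\rho\geq R\}$, which is legitimate precisely because it alters $f$ only by the compactly supported, hence weight-irrelevant, amount $(1-\chi_R)f$; this converts the core into a source-free region where the harmonic interior estimate, together with the subquadratic growth rate $\beta+2<2$ of $u_1$, delivers the required smallness of $i\partial\bar\partial u_1$.
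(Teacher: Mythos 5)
You correctly identified positivity of $\omega+i\partial\bar{\partial}u_1$ as the crux, and your treatment of the error term $f_1$ (quadratic remainder plus a compactly supported piece) is fine. The gap is in the step that is supposed to deliver positivity on the compact core. You cut off the \emph{source}, solve $\Delta u_1=2\chi_R f$, and then claim that since $u_1$ is harmonic on $\{\rho\leq R\}$ with $|u_1|\leq C\|f\|R^{\beta+2}$ there, ``the interior estimate for harmonic functions on balls of radius $\sim R$'' gives $|i\partial\bar{\partial}u_1|\leq CR^{-2}\cdot C\|f\|R^{\beta+2}=C\|f\|R^{\beta}$ on $\{\rho\leq R/2\}$. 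The estimate $|\nabla^2u|\leq Cr^{-2}\sup_{B(x,r)}|u|$ with $C$ independent of $r$ is a Euclidean scaling fact; for $x$ in the compact core of an AC manifold and $r\sim R$, the ball $B(x,R/2)$ contains the entire core, and the rescaled metrics $R^{-2}g$ degenerate to the cone with its apex rather than converging to a uniformly controlled geometry, so no such uniform constant is available. What the available tools give is: elliptic estimates at scale $\rho(x)$ in the conical region (which just reproduce the weighted Schauder bound $|\nabla^2u_1|\leq C\|f\|\rho^{\beta}$, useless for $\rho=O(1)$), and estimates at scale $O(1)$ on the core, which yield only $|\nabla^2u_1|\leq C\|f\|$ there --- bounded, but with no smallness in $R$. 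Concretely, a harmonic function on $\{\rho<R\}$ can contain ``decaying modes'' $b_k\rho^{\lambda_k^-}\phi_k$ generated by the core, and more seriously $M$ may carry global harmonic functions of growth rate in $(0,2)$ with nonvanishing complex Hessian (Corollary \ref{c:subquadratic} rules this out only under ${\rm Ric}\geq0$, which Lemma \ref{prop:main} does not assume); a compactness argument in $R$ therefore does not force the Hessian on the core to vanish in the limit, and the quantitative $R^{-2}$ gain is unproven.

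The paper's proof sidesteps this entirely by cutting off the \emph{solution} rather than the source: it solves $\Delta\hat{u}_1=2f$ globally via Theorem \ref{surjectivelaplacian} and sets $u_1=\eta_R\hat{u}_1$, where $\eta_R\equiv0$ on $\{\rho\leq R\}$ and $\eta_R\equiv1$ on $\{\rho\geq2R\}$. Then $i\partial\bar{\partial}u_1$ vanishes identically on the core, while on $\{\rho\geq R\}$ every term (including those involving $\eta'/R$ and $\eta''/R^2$, supported in the annulus $\{R\leq\rho\leq2R\}$) is $O(R^{\beta})$ by the weighted bounds on $\hat{u}_1$, so $\sup_M|i\partial\bar{\partial}u_1|\to0$ as $R\to\infty$ and positivity is immediate. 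The Monge--Amp\`ere error on $\{\rho\leq2R\}$ is then just a compactly supported smooth function, exactly as in your treatment of $(1-\chi_R)f$. Your argument becomes correct if you replace your source cutoff by this solution cutoff; as written, the interior estimate on the core is a genuine gap.
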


Let us take this for granted for now. Then $\omega_1 := \omega + i\partial\bar{\partial}u_1$ is an AC K{\"a}hler metric again. If, by chance, $2\beta < -2$, then Part (ii) of Theorem \ref{t:existence} tells us that $(\omega_1 + i\partial\bar{\partial}u_2)^n = e^{f_1}\omega_1^n$ is solvable with $u_2 \in C^\infty_{2\beta + 2}(M)$, and $u := u_1 + u_2$ will be a solution to the original equation. On the other hand, if we still have $2\beta \geq -2$, then Lemma \ref{prop:main} allows us to construct a function $u_2 \in C^\infty_{2\beta+2+\epsilon}(M)$ for all
$\epsilon > 0$ such that $(\omega_1 + i\partial\bar{\partial}u_2)^n = e^{f_1 - f_2}\omega_1^n$, where $f_2 \in C^\infty_{4\beta + \epsilon}(M)$ for all $\epsilon > 0$. It is then clear that
we can proceed iteratively until the rate of the error drops below $-2$.

\begin{proof}[Proof of Lemma \ref{prop:main}]
We identify $M\setminus K$ and $(1,\infty) \times L$ via $\Phi$.
Let $\eta:\mathbb{R}^+\rightarrow\mathbb{R}^+$ be smooth with
$\eta(t)=0$ for $t\leq 1$ and $\eta(t)=1$ for $t \geq 2$. Given $R > 1$, define $\eta_{R}:M\to\R$ by
$\eta_{R} :=\eta \circ (r/R)$. Since $\Delta:C_{\beta+2}^{\infty}(M)\to
C_{\beta}^{\infty}(M)$ is surjective {by} Theorem \ref{surjectivelaplacian}, we can find a function $\hat{u}_1\in C_{\beta+2}^{\infty}(M)$
solving the equation $\Delta\hat{u}_1=2f$
on $M$ (it would be enough to solve this away from a large
compact set).
We now claim that $u_1 := \eta_R \hat{u}_1$ has all the desired properties if $R \gg 1$.

As for positivity of the closed $(1,1)$-form $\omega + i\partial\bar{\partial}u_1$, consider that
$$
i\partial\bar{\partial}u_1 =\eta_{R}i\partial\bar{\partial}\hat{u}_1
+ i\frac{\eta'}{R}\left(\partial\hat{u}_1 \wedge \bar{\partial}r + \partial r \wedge \bar{\partial}\hat{u}_1\right) + i\hat{u}_1\left(\frac{\eta'}{R} \partial\bar{\partial}r + \frac{\eta''}{R^2}\partial r \wedge \bar{\partial}r\right).
$$
The length of this form is $O(R^{\beta})$ because $\hat{u}_1 \in C^\infty_{\beta + 2}(M)$, $r \in C^\infty_{1}(M\setminus K)$, and $\eta' \circ (r/R)$ vanishes unless $r \in [R,2R]$.
Thus,
$\sup |i\partial\bar{\partial}u_1| \to 0$ as $R \to \infty$, and so $\omega + i\partial\bar{\partial}u_1 > 0$ if $R \gg 1$, as desired.

As for the volume form condition, observe that, for $r > 2R$, we have
\begin{align*}
(\omega + i\partial\bar{\partial}u_1)^n &= (1 + \frac{1}{2}\Delta \hat{u}_1 )\omega^n + {n \choose 2} (i\partial\bar{\partial}\hat{u}_1)^2 \wedge \omega^{n-2} + ... + (i\partial\bar{\partial}\hat{u}_1)^n\\
&= (1 + f + O(r^{2\beta})) \omega^n,
\end{align*}
so that the lemma follows with $f_1 = f - \log(1 + f + O(r^{2\beta})) = O(r^{2\beta})$ at infinity.
\end{proof}

\begin{remark}
Section 2 of \cite{Tian} is devoted to a similar \textquotedblleft preconditioning\textquotedblright, in which a certain initial AC K{\"a}hler metric gets modified step by step to improve the decay of its Ricci potential. We have found this part of \cite{Tian} difficult to follow. Our proof here exhibits a clean way of doing the iterative improvement if one is willing to invest some basic elliptic theory in weighted H{\"o}lder spaces.
\end{remark}

\subsection{Application to Calabi-Yau metrics}\label{s:cy_ex_proof}
This section is dedicated to the proof of Theorem \ref{thm:main}. We begin with two preliminary lemmas.

\begin{lemma}\label{complexasym}
In the setting of Theorem \ref{thm:main}, we have
$\Phi^*J-J_{0} =O(r^{\lambda})$ with $g_0$-derivatives.
\end{lemma}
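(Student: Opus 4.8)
The plan is to exploit the elementary fact that a nowhere-vanishing holomorphic volume form determines its own complex structure \emph{algebraically}, and then to transfer the decay of $\Phi^*\Omega - \Omega_0$ from the forms to the complex structures. The essential point is that in the setting of Theorem \ref{thm:main} we are handed closeness of the holomorphic volume forms but not, a priori, closeness of any metric on $M$; so the whole argument must be routed through $\Omega$ alone.

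\emph{Algebraic recovery of $J$ from $\Omega$.} On any $2n$-dimensional almost complex manifold with nowhere-vanishing $(n,0)$-form, the $(1,0)$-cotangent space is recovered as $\{\xi \in T_\C^*\colon \xi\wedge\Omega = 0\}$, since wedging a $(1,0)$-form into $\Omega$ gives an $(n+1,0)$-form, which vanishes. Dually, $\wedge\,\Omega$ maps $\Lambda^{0,1}$ isomorphically onto $\Lambda^{n,1}$, so the projection $\pi^{0,1}$, and hence $J$ itself, is a rational (in particular smooth) function of $\Omega$ on the open set of nondegenerate forms (those with $\Omega\wedge\bar\Omega\neq 0$). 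Write $\Psi$ for the resulting fiberwise map, extended smoothly to a neighborhood of $\Omega_0$ in $\Lambda^n T^*_\C C$. Then $\Psi(\Omega) = J$, the map $\Psi$ is invariant under $\Omega\mapsto c\,\Omega$ for $c\in\C^*$, and it is natural under diffeomorphisms; in particular $\Psi(\nu_t^*\,\cdot\,) = \nu_t^*\,\Psi(\,\cdot\,)$.

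\emph{Reduction to a fixed smooth map.} Since $\Omega$ is holomorphic for $J$ on $M$, the pullback $\Phi^*\Omega$ is of type $(n,0)$ for $\Phi^*J$ on $C\setminus K$, whence $\Phi^*J = \Psi(\Phi^*\Omega)$; likewise $J_0 = \Psi(\Omega_0)$ by Definition \ref{d:cycone}. As $\lambda<0$, the hypothesis $\Phi^*\Omega-\Omega_0 = O(r^\lambda)$ forces $\Phi^*\Omega\to\Omega_0$ uniformly at infinity, so for large $r$ we may expand
\[
\Phi^*J - J_0 = \Psi(\Phi^*\Omega) - \Psi(\Omega_0) = D\Psi\big|_{\Omega_0}\bigl(\Phi^*\Omega-\Omega_0\bigr) + O\bigl(|\Phi^*\Omega-\Omega_0|^2\bigr).
\]
The linear term is $O(r^\lambda)$ because $D\Psi|_{\Omega_0}$ is a scale-equivariant algebraic operation built from $\Omega_0$ and $J_0$, hence rate-preserving, while the remainder is $O(r^{2\lambda}) = o(r^\lambda)$ since $2\lambda<\lambda<0$. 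This yields the pointwise bound $|\Phi^*J-J_0|_{g_0} = O(r^\lambda)$.

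\emph{Derivatives via rescaling.} The $g_0$-derivative estimates follow from the scale-equivariance of $\Psi$ together with the identities $\nu_t^*g_0 = t^2 g_0$ and $\nu_t^*\circ\nabla_0 = \nabla_0\circ\nu_t^*$ used in Lemma \ref{simple321}. On each dyadic annulus $\{R\le r\le 2R\}$ I rescale by $\nu_R$ to the fixed annulus $L\times[1,2]$: the normalized form $R^{-n}\nu_R^*\Phi^*\Omega$ tends to $\Omega_0$ in every $C^k(L\times[1,2],g_0)$ at rate $O(R^\lambda)$, and by degree-$0$ homogeneity $\Psi(R^{-n}\nu_R^*\Phi^*\Omega) = \nu_R^*(\Phi^*J)$. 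The $C^k$-continuity of the fixed smooth map $\Psi$ on a compact neighborhood of $\Omega_0$ then bounds $\nu_R^*(\Phi^*J-J_0)$ in $C^k(L\times[1,2],g_0)$ by $O(R^\lambda)$, and undoing the rescaling, tracking the valence of $\nabla_0^k(\Phi^*J-J_0)$ exactly as in Lemma \ref{simple321}, converts this into $|\nabla_0^k(\Phi^*J-J_0)|_{g_0} = O(r^{\lambda-k})$, which is the claim. I expect the only genuine bookkeeping to sit in this last step, namely keeping the weights straight when rescaling tensors of differing valence and verifying that the nonlinearity of $\Psi$ never worsens the leading rate; both are disposed of by the homogeneity $\Psi(c\,\cdot\,)=\Psi(\,\cdot\,)$ and by $2\lambda<\lambda$, so the conceptual content is entirely the algebraic determination of $J$ by $\Omega$.
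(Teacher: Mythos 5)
Your proof is correct and follows essentially the same route as the paper: both recover $J$ pointwise from $\Omega$ by linear algebra (the paper via an explicit graph construction $U = \{u + \mu(u)\}$ with $\norm{\mu} \le C|\Omega-\Omega_0|$, following Donaldson) and then transfer the decay, and your rescaling argument for the derivative estimates supplies a step the paper leaves implicit (``we only prove $C^0$ decay''). One small imprecision: the forms that determine a complex structure are the \emph{decomposable} $n$-forms with $\Omega\wedge\bar{\Omega}\neq 0$, which is not an open subset of $\Lambda^n T^*_{\C}$ (the kernel of $\xi\mapsto\xi\wedge\Omega$ drops in dimension off the decomposable cone), so $\Psi$ should be defined on that smooth cone over the Grassmannian rather than ``on the open set of nondegenerate forms''; since $\Omega_0$ and $\Phi^*\Omega$ both lie on this locus your expansion remains valid, and the paper's operator $\mu$ is precisely the quantitative version of this restriction.
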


\begin{proof}
We again identify $M \setminus K$ and $(1,\infty) \times L$ via $\Phi$, and we tacitly allow ourselves to work away from increasingly large compact sets whenever necessary. All metric quantities and operations will be the ones associated with $g_0$. We only prove $C^0$ decay, using linear algebra as in \cite[\S2]{donaldson:96}.

Let us begin by defining the complex vector spaces $$U_0 := \wedge_{J_{0}}^{1,\,0}(T_{x}^{\C}M)^{*}, \;\; U :=  \wedge_{J}^{1,\,0}(T_{x}^{\C}M)^{*},\;\;V:=(T^{\C}_{x}M)^*,$$ for each $x \in M \setminus K$. Then $U_0, U \subset V$ with natural Hermitian structures induced by $g_0$. \medskip\

\noindent \emph{Claim:} There exists a $\C$-linear map $\mu: U_0 \to \bar{U}_0$,
$\norm{\mu} \leq C|\Omega-\Omega_{0}|$, with $U = \{u + \mu(u): u \in U_0\}$.\medskip\

Let us first see how this implies $C^0$ decay for $J$.
Any $\sigma \in U_0$ can obviously be written as
$$\sigma =[u+\mu(u)]+[v+\bar{\mu}(v)],\;\; u:=(1-\bar{\mu}\mu)^{-1}(\sigma), \;\;  v :=-\mu(u),$$
bearing in mind {the fact} that $\|\mu\| \ll 1$, so that $1-\bar{\mu}\mu$ is indeed invertible. As a consequence,
\begin{equation*}
|(J-J_{0})\sigma|= |i(u+\mu(u)) -i(v+\bar{\mu}(v)) -i\sigma| \leq C|\Omega-\Omega_0| |\sigma|,
\end{equation*}
which implies what we need to know.\medskip\

\noindent \emph{Proof of the claim.} Set $W:=\wedge^{n+1}V$ and define maps $T, T_0: V\to W$ by  $T_{(0)}\alpha=\Omega_{(0)}\wedge\alpha$.
Then
$$\ker T_0 = U_0, \;\; \ker T = U, \;\; (\ker T_{0})^{\perp}=\bar{U}_0.$$
Let $\pi_0$ denote the $g_0$-orthogonal projection from $W$ onto $\im T_0$, and define $T':= \pi_0 \circ T$. Obviously $\norm{T-T_{0}} \leq C|\Omega-\Omega_{0}| \ll 1$ and $\dim \ker T_0 = \dim \ker T$,
so that $\pi_0$ restricts to an isomorphism from $\im T$ onto $\im T_0$, and $T'$ restricts to an isomorphism from $\bar{U}_0$ onto $\im T_0$. It is now clear that $$\mu:=-(T'|_{\bar{U}_0})^{-1}\circ (T'|_{U_0})$$ presents $U$ as a graph over $U_0$. Moreover,
\begin{align*}
\norm{T'|_{U_0}} &=\norm{(T'-T_{0})|_{U_0}} \leq C|\Omega-\Omega_{0}|,\\
\norm{(T'|_{\bar{U}_0})^{-1}}
&\leq \norm{(T_{0}|_{\bar{U}_0})^{-1}} +
C|\Omega-\Omega_{0}| \leq C,
\end{align*}
and so $\|\mu\|$ does indeed decay in the desired fashion.
\end{proof}

Given Lemma \ref{complexasym} and Theorem \ref{t:existence}, we can now follow \cite[\S 4.2]{vanC}. The only difference is that we make use of the fact that $r^{2\alpha}$ is strictly plurisubharmonic for all $\alpha > 0$, not just $\alpha = 1$.

\begin{lemma}\label{1convex}
Let $(M, g, J)$ be a K\"ahler manifold and {let} $(C, g_0, J_0)$ {be} a K{\"a}hler cone with radius $r$ such that there exist $\lambda < 0$, a compact $K\subset M$, and a diffeomorphism $\Phi:\{r > 1\}\to M\setminus K$, with $|\nabla_{0}^{k}(\Phi^{*}J-J_{0})|_{{0}}=O(r^{\lambda-k})$ for $k=0,1$. Then, for all $\alpha > 0$, $M$ admits a smooth plurisubharmonic function $h_\alpha$ which is strictly plurisubharmonic and equal to $(r \circ \Phi^{-1})^{2\alpha}$ outside a compact $K_\alpha$.
\end{lemma}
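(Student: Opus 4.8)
The plan is to establish strict plurisubharmonicity of $r^{2\alpha}$ \emph{quantitatively} on the cone, then transplant it to $M$ through $\Phi$ by showing the error caused by $J\neq J_0$ is of strictly lower order than the cone's positivity, and finally to patch the resulting function across the compact core using $1$-convexity of $M$.

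\emph{Step 1 (the cone).} Working on $(C,g_0,J_0)$, I would compute $i\p\bar{\p}\,r^{2\alpha}$ directly. For radial $f=f(r)$ one has $i\p\bar{\p}f=f'\,i\p\bar{\p}r+f''\,i\p r\wedge\bar{\p}r$, and differentiating $i\p\bar{\p}r^2=2\omega_0$ gives $i\p\bar{\p}r=r^{-1}(\omega_0-i\p r\wedge\bar{\p}r)$. Substituting $f=r^{2\alpha}$ yields
$$i\p\bar{\p}\,r^{2\alpha}=2\alpha\,r^{2\alpha-2}\big(\omega_0+(2\alpha-2)\,i\p r\wedge\bar{\p}r\big).$$
Since $|dr|_{g_0}=1$ forces $i\p r\wedge\bar{\p}r$ to be a nonnegative $(1,1)$-form whose single nonzero eigenvalue equals $|\p r|_{g_0}^2=\tfrac12$, i.e.\ $0\le i\p r\wedge\bar{\p}r\le\tfrac12\omega_0$, the bracket is bounded below by $\alpha\,\omega_0$ when $0<\alpha<1$ and by $\omega_0$ when $\alpha\ge 1$. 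Hence in all cases there is $c_\alpha>0$ with $i\p\bar{\p}\,r^{2\alpha}\ge c_\alpha\,r^{2\alpha-2}\omega_0>0$. This is the only place where the range $0<\alpha<1$ needs care (the bound $\kappa=\tfrac12$ is exactly tight as $\alpha\to 0^+$), and it is precisely the extra input flagged before the lemma.

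\emph{Step 2 (transplanting to $M$).} After identifying $M\setminus K\cong\{r>1\}\times L$ via $\Phi$ and writing $E:=J-J_0$, I would use that for a real function $\psi$ the form $i\p_J\bar{\p}_J\psi$ is, up to a universal constant, $d(d\psi\circ J)$, so that
$$i\p_J\bar{\p}_J\,r^{2\alpha}-i\p_{J_0}\bar{\p}_{J_0}\,r^{2\alpha}=\mathrm{const}\cdot d\big(d(r^{2\alpha})\circ E\big).$$
By Lemma \ref{simple321} applied to the degree-$2\alpha$ homogeneous function $r^{2\alpha}$ one has $|d\,r^{2\alpha}|_{g_0}=O(r^{2\alpha-1})$ and $|\nabla_0 d\,r^{2\alpha}|_{g_0}=O(r^{2\alpha-2})$; combined with the hypotheses $|E|_{g_0}=O(r^{\lambda})$ and $|\nabla_0 E|_{g_0}=O(r^{\lambda-1})$ and the Leibniz rule $\nabla_0(d\psi\circ E)=(\nabla_0 d\psi)\circ E+d\psi\circ(\nabla_0 E)$, the right-hand side has $g_0$-norm $O(r^{2\alpha-2+\lambda})$. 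Because $\lambda<0$, this error is of strictly lower order than the bound $c_\alpha r^{2\alpha-2}$ from Step 1. Evaluating the Hermitian form $X\mapsto i\p_J\bar{\p}_J\,r^{2\alpha}(X,JX)$ and absorbing the further $O(r^{\lambda})$ discrepancy between $JX$ and $J_0X$, I conclude $i\p_J\bar{\p}_J\,r^{2\alpha}(X,JX)\ge\tfrac12 c_\alpha r^{2\alpha-2}|X|^2>0$ once $r$ is large; that is, $(r\circ\Phi^{-1})^{2\alpha}$ is strictly $J$-plurisubharmonic outside some compact set.

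\emph{Step 3 (globalization), and the main obstacle.} It remains to produce a globally defined smooth $h_\alpha$, plurisubharmonic on all of $M$, that agrees with $\psi:=(r\circ\Phi^{-1})^{2\alpha}$ and is strictly plurisubharmonic outside a compact $K_\alpha$. Since $M$ is $1$-convex, its Remmert reduction supplies a smooth plurisubharmonic exhaustion $\rho$ that is strictly plurisubharmonic off the (compact) maximal compact analytic subset $A$. Choosing levels $t_1<t_2$ with $A\subset\{\psi<t_1\}$ and $\psi$ strictly plurisubharmonic on $\{\psi\ge t_1\}$, both $\psi$ and a suitable affine normalization $a\rho+b$ are strictly plurisubharmonic on the compact annulus $\{t_1\le\psi\le t_2\}$, and a regularized maximum glues $a\rho+b$ (inside) to $\psi$ (outside) there. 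Setting $h_\alpha=\psi$ on $\{\psi\ge t_2\}$, the regularized maximum on the annulus, and $a\rho+b$ on $\{\psi\le t_1\}$, with $K_\alpha=\{\psi\le t_2\}$, gives the required function; because the gluing takes place on a \emph{compact} region, the a priori uncontrolled relative growth of $\rho$ and $\psi$ is irrelevant, and one only needs $a\rho+b>\psi$ near $\{\psi=t_1\}$ and $a\rho+b<\psi$ near $\{\psi=t_2\}$, which are arranged by taking $t_2\gg t_1$ and $a$ small. I expect the genuine content to lie in Steps 1–2 — in particular the exponent count $2\alpha-2+\lambda<2\alpha-2$ that renders the complex-structure error subleading — while Step 3 is standard if fiddly; note that $h_\alpha$ is necessarily only \emph{weakly} plurisubharmonic along $A$, as it must be since no strictly plurisubharmonic function can exist near a positive-dimensional compact analytic set.
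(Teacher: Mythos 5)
Your Steps 1 and 2 are exactly the paper's argument, only written out in more detail: the paper asserts that $i\p_0\bar{\p}_0 r^{2\alpha}>0$ ``one easily checks'', and your explicit computation $i\p\bar{\p}r^{2\alpha}=2\alpha r^{2\alpha-2}(\omega_0+(2\alpha-2)\,i\p r\wedge\bar{\p}r)$ together with the eigenvalue bound $0\leq i\p r\wedge\bar{\p}r\leq\frac{1}{2}\omega_0$ is precisely that check (and correctly isolates why $\alpha<1$ survives); the error estimate $\frac{1}{2}|d((J-J_0)dr^{2\alpha})|_0=O(r^{2\alpha-2+\lambda})$ and its absorption are verbatim the paper's. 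Where you genuinely diverge is Step 3. The paper does not glue against a second plurisubharmonic function at all: it simply sets $h_\alpha=\psi\circ r^{2\alpha}$ for a convex nondecreasing $\psi$ that is \emph{constant} below a level and equal to the identity above a higher level, so that $h_\alpha$ is literally constant on the compact core (hence trivially psh there), equals $\psi''\,i\p r^{2\alpha}\wedge\bar{\p}r^{2\alpha}+\psi'\,i\p\bar{\p}r^{2\alpha}\geq 0$ in the transition annulus, and equals $r^{2\alpha}$ at infinity. This is your regularized maximum with the inner competitor taken to be a constant, which makes the Remmert reduction, the exhaustion $\rho$, and the choice of $a,b,t_1,t_2$ unnecessary. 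Your version does work, but note a point you should make explicit: the $1$-convexity of $M$ that licenses the Remmert reduction is not a hypothesis of the lemma --- it is itself a consequence of your Steps 1--2 (a proper extension of $(r\circ\Phi^{-1})^{2\alpha}$ is strictly psh outside a compact set), and indeed this lemma is what underwrites the paper's claim that AC K\"ahler manifolds are $1$-convex. So the logic is not circular, but it is roundabout, and the constant-inner-function trick buys you the same conclusion with none of that machinery.
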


\begin{proof}
We identify $M \setminus K$ and $\{r > 1\}$ via $\Phi$. Let $\psi:\R^+\to\R^+$ be smooth with $\psi',\psi''\geq 0$ and
$$
\psi(t) = \begin{cases}
T+2 & \textrm{if}\;\,t<T + 1,\\
t & \textrm{if}\;\,t>T+3,
\end{cases}
$$
for some $T = T_\alpha > 1$ to be specified later. Then $h_\alpha :=\psi \circ r^{2\alpha}: M \to \R^+$ satisfies
$$
i\partial\bar{\partial}h_\alpha =
\begin{cases}
0 &\textrm{on}\;K \cup \{1 < r < (T+1)^{\frac{1}{2\alpha}}\},\\
\psi'' i\p r^{2\alpha}\wedge\bar{\p}r^{2\alpha}+\psi'i\p\bar{\p}r^{2\alpha} &\textrm{on}\;\{r > T^{\frac{1}{2\alpha}}\}.
\end{cases}
$$
Since $i\p u \wedge\bar{\p}u \geq 0$ with respect to $J$ for any smooth real-valued function $u$, it suffices to prove that $i\partial \bar{\partial}r^{2\alpha} > 0$ with respect to $J$ on $\{r > R\}$, provided that $R \gg 1$. Indeed, on $M \setminus K$,
\begin{equation*}
\begin{split}
|i\p\bar{\p}r^{2\alpha}-i\p_{{0}}\bar{\p}_{{0}}r^{2\alpha}|_{{0}}=
\frac{1}{2}|d((J-J_{0})dr^{2\alpha})|_{{0}} = O(r^{2\alpha - 2 + \lambda}).
\end{split}
\end{equation*}
This is of lower order compared to $i\p_{{0}}\bar{\p}_{{0}}r^{2\alpha}$, which one easily checks is positive with respect to $J_0$.
Using $J - J_0 = O(r^\lambda)$ again, this shows that $i\p\bar{\p}r^{2\alpha}$ is positive with respect to $J$ for $R \gg 1$.
\end{proof}

\begin{proof}[Proof of Theorem \ref{thm:main}]
We identify $M\setminus K$ and $(1,\infty) \times L$ via $\Phi$ and allow ourselves to work off of increasingly large compact sets if convenient.
By Lemma \ref{complexasym}, $J-J_{0}=O(r^{\lambda})$ with $g_0$-derivatives. {As a consequence}, by Lemma \ref{1convex}, $M$ admits smooth plurisubharmonic exhaustions $h_\alpha$ ($\alpha > 0$) that are strictly plurisubharmonic and equal to $r^{2\alpha}$ outside a compact set. Moreover, for all $k \in \N_0$,
\begin{equation}\label{asym}
|\nabla^k_0(i\partial\bar{\partial}r^{2}-\omega_{0})|_{g_0}=O(r^{\lambda-k}).
\end{equation}

By assumption, we have a K{\"a}hler form $\omega$ on $M$ and a smooth real $(1,1)$-form $\xi$ on $M \setminus K$ with $|\nabla_0^k\xi|_0 = O(r^{\mu-k})$ such that
$\omega -\xi$ is $d$-exact on $M \setminus K$. Thus, by Corollary \ref{c:ideldelbar}(ii), $\omega - \xi = -i\p\bar{\p}u$ on $\{r > R\}$ for some $R > 1$ and some smooth real{-valued} function $u$. Fix $\alpha \in (0,1)$. We can assume that
$h_\alpha = r^{2\alpha}$ as well as $h_1 = r^2$ on $\{r > R\}$, and that both functions are strictly plurisubharmonic on this region.
{Also}, fix a cutoff function $\zeta:M\to\R$ with
$$
\zeta(x) = \begin{cases}
0 & \textrm{if}\;\,r(x)< 2R,\\
1 & \textrm{if}\;\,r(x)>3R,
\end{cases}
$$
and define $\zeta_{S}(x) := \zeta(x/S)$ in the obvious {way} for $S > 2$.
Given $c > 0$, we now construct
$$\hat{\omega} :=\omega+i\p\bar{\p}(\zeta u) + C i\p\bar{\p} ((1-\zeta_S)h_\alpha) + ci\p\bar{\p}h_1,$$
with $C$ and $S$ to be determined.
{Note that} $\hat{\omega} = \omega+C i\p\bar{\p} h_\alpha + ci\p\bar{\p}h_1 \geq \omega > 0$ on $K \cup \{1 < r < 2R\}$ because $h_\alpha$ and $h_1$ are plurisubharmonic;
$\hat{\omega} =\xi + C i\p\bar{\p} h_\alpha + ci\p\bar{\p}h_1 > 0$ on $\{3R < r < 2SR\}$, after increasing $R$ if necessary, because $|\xi| = O(r^\mu)$; $\hat{\omega} > 0$ on $\{3SR < r\}$ for the same reason; $\hat{\omega} > 0$ on
$\{2R \leq r \leq 3R\}$ by compactness if $R$ is now fixed and $C$ is made large enough; and finally,
$\hat{\omega} > 0$ on $\{2SR \leq r \leq 3SR\}$ if $S \gg 1$ depending on all previous choices because $h_\alpha$ is of lower order compared to $h_1$.
In conclusion, $\hat{\omega}$ is a genuine K\"ahler form {on $M$} with $\hat{\omega} = \xi + ci\p\bar{\p}r^{2}$ at infinity.

Let us denote the metric corresponding to $\hat{\omega}$ by $\hat{g}$. Then, from \eqref{asym}, for all $k \in \N_0$,
$$
|\nabla_{0}^{k}(\hat{g}-cg_{0})|_{{0}}=O(r^{\nu-k}).
$$
As a result, $(M,\hat{g},J)$ is an AC K{\"a}hler manifold {of} rate $\nu$, and $\hat{g}$ has a global Ricci potential
$$
\hat{f} =\log\left(\frac{i^{n^2}\Omega\wedge\bar{\Omega}}{(\hat{\omega}/c)^{n}}\right) \in C^\infty_{\nu}(M).
$$
We may now appeal to Theorem \ref{t:existence} in order to solve the complex Monge-Amp{\`e}re equation $$(\hat{\omega} + i\partial\bar{\partial}\hat{u})^n = e^{\hat{f}}\hat{\omega}^n$$
for $\hat{u}$, bearing in mind that, by
Remark \ref{r:ricci_weights}, $\mathcal{P} \cap (0,2) = \{\nu_1,..., \nu_k\}$.
\end{proof}
\newpage
\section{Uniqueness in a given K{\"a}hler class}\label{s:uniqueness}

Let $(M^n,J)$ be an open complex manifold with a nowhere vanishing holomorphic volume form $\Omega$. Let $\omega_1,\omega_2$ be Ricci-flat K\"ahler forms in the same de Rham class on $M$, with volume form $i^{n^2}\Omega \wedge \bar{\Omega}$,
{whose} associated {K\"ahler} metrics $g_1, g_2$ are complete. 
In this generality, $g_1, g_2$ need not even have the same volume growth rate. However, if one assumes in addition that $g_1$ and $g_2$ are asymptotically isometric at infinity, then one might perhaps expect that $g_1 = \Phi^*g_2$ for some $\Phi \in {\rm Aut}(M,J,\Omega)$, or at least 
that $g_1 = \Phi^*g_2$ for \emph{some} reasonably canonical map $\Phi \in {\rm Diff}(M)$.
One difficulty here stems from the fact that $(M,J)$ may have a huge automorphism group. For example, any map of the form
$(z,w) \mapsto (z, w + f(z))$ with $f \in \mathcal{O}(\C)$ is an automorphism of $\C^2$ that preserves $dz \wedge dw$.

What one \emph{can} prove in some cases is that $\omega_1 = \omega_2$ if $\omega_1 - \omega_2$ has rapid decay. In the ALE case, this is due to Joyce \cite[Theorem 8.4.4]{Joyce} under the assumption that $\omega_1 - \omega_2 = O(r^{-n-\epsilon})$. {This rate} is good enough for applications because ALE Calabi-Yau
metrics decay like $r^{-2n}$.
Direct extensions to the AC case were proved in \cite{goto, vanC3}, but $O(r^{-n-\epsilon})$ now no longer suffices for many applications. 
We pursue an entirely different
approach here that allows us to relax $O(r^{-n-\epsilon})$ to $O(r^{-\epsilon})$.

\begin{theorem}\label{t:uniqueness}
Let $(M, J)$ be an $n$-dimensional complex manifold with {\rm AC} K{\"a}hler metrics $g_1,g_2$. If $\eta = \omega_1 - \omega_2$ is $d$-exact, $\eta \in C^\infty_{-\epsilon}(M)$ for some $\epsilon > 0$, $\omega_1^n = \omega_2^n$, and ${\rm Ric}(g_i) \geq 0$, then $\eta = 0$.
\end{theorem}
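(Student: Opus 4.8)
The plan is to exploit the new ingredient Corollary \ref{c:subquadratic} to upgrade the slow decay of $\eta$ into arbitrarily fast decay, at which point a maximum principle finishes the argument.

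First I would record two consequences of the hypotheses. Setting $\omega_t = \omega_2 + t\eta = (1-t)\omega_2 + t\omega_1$ and $\Theta := n\int_0^1 \omega_t^{n-1}\,dt$, the fundamental theorem of calculus gives $\eta\wedge\Theta = \omega_1^n - \omega_2^n = 0$. Here $\Theta$ is a smooth, positive $(n-1,n-1)$-form, which is $d$-closed since each $\omega_t$ is K\"ahler, and which is asymptotic to $n c^{n-1}\omega_0^{n-1}$ at infinity, where $c\omega_0$ is the common tangent cone of $g_1,g_2$ (common because $\eta\to 0$). Since $\eta$ is $d$-exact and $\eta\in C^\infty_{-\epsilon}(M)$, the $i\partial\bar\partial$-lemma on AC K\"ahler manifolds (Corollary \ref{c:ideldelbar}) produces a real potential with $\eta = i\partial\bar\partial\phi$; shrinking $\epsilon$ so that $2-\epsilon\notin\mathcal{P}$, I may take $\phi\in C^\infty_{2-\epsilon}(M)$, which is strictly subquadratic. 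Then the identity $\eta\wedge\Theta = 0$ reads $i\partial\bar\partial\phi\wedge\Theta = 0$; dividing by $\omega_2^n$ this is a linear, second-order, uniformly elliptic equation $L\phi = 0$ with no zeroth-order term, whose symbol is asymptotic to that of the cone Laplacian. Finally, since $\omega_1^n = \omega_2^n$ the two metrics share a volume form, hence the same Ricci form, so $\Ric(g_1) = \Ric(g_2)\geq 0$ as well; it is $\Ric(g_2)\geq 0$ that feeds into Corollary \ref{c:subquadratic}.

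The heart of the proof is an iteration on the rate of $\phi$. Expanding $\omega_1^n = \omega_2^n$ and isolating the linear term yields $\Delta_{g_2}\phi = -Q(\phi)$, where $Q(\phi)$ is a sum of terms each at least quadratic in $i\partial\bar\partial\phi$. Suppose $\phi\in C^\infty_a(M)$ for some $a < 2$, so $i\partial\bar\partial\phi = O(r^{a-2})$ and hence $\Delta_{g_2}\phi = -Q(\phi) = O(r^{2a-4})$. Choosing the weight to avoid $\mathcal{P}$, Theorem \ref{surjectivelaplacian} provides $\phi_p\in C^\infty_{2a-2}(M)$ with $\Delta_{g_2}\phi_p = -Q(\phi)$. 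Then $\phi_h := \phi - \phi_p$ is genuinely $\Delta_{g_2}$-harmonic and still lies in $C^\infty_a(M)$ with $a < 2$, so Corollary \ref{c:subquadratic} applies on $(M,g_2)$ and shows $\phi_h$ is pluriharmonic, i.e.\ $i\partial\bar\partial\phi_h = 0$. Consequently $\eta = i\partial\bar\partial\phi = i\partial\bar\partial\phi_p$, so $\phi_p$ is a new potential for $\eta$ of rate $2a-2 < a$. Iterating the map $a\mapsto 2a-2$ starting from $a_0 = 2-\epsilon$ gives $a_k = 2 - 2^k\epsilon\to -\infty$, so after finitely many steps (and finitely many weight choices dodging the discrete set $\mathcal{P}$, using that $\mathcal{P}\cap(0,2)$ has at most one element by Remark \ref{r:ricci_weights}) I obtain a potential $\phi$ for $\eta$ with $\phi\in C^\infty_a(M)$ for some $a < 0$, so that $\phi\to 0$ at infinity. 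This decaying $\phi$ still satisfies $L\phi = 0$; since $L$ has no zeroth-order term and $\phi\to 0$, the maximum principle forces $\phi\equiv 0$, whence $\eta = i\partial\bar\partial\phi = 0$.

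The main obstacle is exactly the slow decay of $\eta$. The classical route runs the integration-by-parts identity $\int_M i\partial\phi\wedge\bar\partial\phi\wedge\Theta = \int_{\partial B_R}(\cdots)$, whose boundary term decays only under rapid decay of $\phi$; this is why the older proofs needed $O(r^{-n-\epsilon})$. Corollary \ref{c:subquadratic} is precisely what circumvents this: at each stage it lets me peel off the subquadratic harmonic — hence pluriharmonic, hence $i\partial\bar\partial$-invisible — part of the potential, doubling the decay rate and converting the a priori slow rate into the fast decay that the maximum principle (or, if one prefers, the integration by parts) can digest. The remaining technical points, namely the existence and rate of the initial $i\partial\bar\partial$-potential and the avoidance of the exceptional weights $\mathcal{P}$ throughout the finitely many iterations, are routine given the weighted elliptic theory of Section \ref{s:linear_analysis}.
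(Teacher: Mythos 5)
Your overall architecture is the right one and, from the potential onwards, essentially reproduces the paper's argument: split the subquadratic potential into a harmonic part (pluriharmonic by Corollary \ref{c:subquadratic}, hence invisible to $i\partial\bar\partial$) plus a decaying part, and kill the decaying part with the maximum principle. The iteration $a \mapsto 2a-2$ is fine modulo the minor caveat that Theorem \ref{surjectivelaplacian} stalls at the Green's function weight $2-2n$, which is harmless since you only need the rate to drop below $0$. But there is a genuine gap at the very first step, and it is not a routine one: you assert that because $\eta$ is exact and lies in $C^\infty_{-\epsilon}(M)$, you ``may take'' a potential $\phi\in C^\infty_{2-\epsilon}(M)$. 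Corollary \ref{c:ideldelbar} produces a smooth potential with \emph{no growth control whatsoever}, and weighted elliptic theory alone does not upgrade it. The obvious attempt fails instructively: solving $\tfrac12\Delta\psi=\Lambda\eta$ with $\psi\in C^\infty_{2-\epsilon}(M)$ gives a candidate, but the difference $\phi-\psi$ is then merely harmonic of \emph{uncontrolled} growth, so Corollary \ref{c:subquadratic} cannot be applied to it and you cannot conclude $i\partial\bar\partial(\phi-\psi)=0$. In other words, you would need to rule out that every potential for $\eta$ grows superquadratically, and nothing in Section \ref{s:linear_analysis} does that.

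This missing step is precisely Theorem \ref{t:iddbar} of the paper, and its proof is where the second half of the ``new idea'' lives (the first half being Corollary \ref{c:subquadratic}, which you do use). One first produces a $1$-form primitive $\zeta\in C^\infty_{1-\epsilon}(M)$ of $\eta$ by separating variables on the cone and doing Hodge theory with parameters on the link, using $H^1(L)=0$ (from ${\rm Ric}_L>0$); one then solves $\tfrac12\Delta u=\bar\partial^*\zeta^{0,1}$ with $u\in C^\infty_{2-\epsilon}(M)$ and observes that $\xi=\zeta^{0,1}-\bar\partial u$ is a \emph{harmonic $1$-form} of rate $1-\epsilon<1$; finally, Theorem \ref{t:slowgrowth1form} --- the AC lift of Cheeger--Tian's Lemma \ref{l:slowgrowth1form}, which is the key external input of the whole section --- shows that such a form is $dv=\bar\partial v$ for an antiholomorphic $v$ of rate $2-\epsilon$, whence ${\rm Im}(u)$ is the desired subquadratic potential. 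So the harmonic $1$-form machinery is needed twice: once to manufacture the subquadratic potential, and once (through Corollary \ref{c:subquadratic}) to discard its harmonic part. Your proposal captures the second use but treats the first as ``routine,'' which it is not; as written, the proof does not get off the ground.
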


\begin{remark}
Pulling back the flat metric on $\C^n$ by a $1$-parameter subgroup of SL$(n,\C)$ shows that $\eta = O(r^{-\epsilon})$ cannot be relaxed to $\eta = O(1)$. Theorem \ref{t:uniqueness} is also false for volume growth $2n-1$ in place of $2n$; the Taub-NUT metric will serve as a useful counterexample throughout the proof.
\end{remark}

\begin{remark}
Carron \cite{Carron} proved a Calabi-Yau type uniqueness theorem, assuming an intermediate rate of decay, on the Hilbert scheme of $N$ points in $\C^2$ with a QALE hyper-K{\"a}hler metric. It would 
be interesting to see whether there exists a common extension of Carron's result and ours.
\end{remark}

The rest of this section is dedicated to proving Theorem \ref{t:uniqueness}. Some conventions first:

$\bullet$ A $1$-form $\alpha$ is \emph{harmonic} if $-\Delta\alpha = (dd^* + d^*d)\alpha = 0$.

$\bullet$ A tensor $T$ on a Riemannian cone is \emph{homogeneous} if $L_{r\partial_r}T = \mu T$ for some $\mu \in \R$. 

$\bullet$ The \emph{rate} of $T$ is the infimum of all $\lambda$ such that $|\nabla^k T| = O(r^{\lambda - k})$ as $r \to \infty$ for every $k$.

$\bullet$ $\epsilon_0 \in (0,1)$ will denote a small constant that only depends on the manifold in question.

Also, for context, recall the following version of the Lichnerowicz-Obata theorem:~If a harmonic function on a Riemannian cone with ${\rm Ric} \geq 0$ has positive rate, then the rate is in fact at least one, and equality is attained precisely for the linear functions on Euclidean space.

 The following lemma, due to Cheeger and Tian \cite{Cheeger}, contains the key idea. A priori there would not even be a reason to expect that $\alpha$ is closed; yet the conclusion is that $\alpha$ must be exact.

\begin{lemma}[{\cite[Lemma 7.27]{Cheeger}}]\label{l:slowgrowth1form}
Let $\alpha$ be a homogeneous harmonic $1$-form of rate $\lambda$ on a cone with ${\rm Ric} \geq 0$. If $\lambda \in [0,1)$, then $\alpha = du$ for a homogeneous harmonic function $u$ of rate $\lambda + 1$.
\end{lemma}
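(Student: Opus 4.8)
The plan is to exploit homogeneity to reduce the problem to a cone over the link $L$, and then use the representation theory of forms on $L$ together with the nonnegative Ricci curvature to pin down the structure of $\alpha$. Write $\alpha = a(r)\,dr + r\,\beta(r)$ where, by homogeneity $L_{r\partial_r}\alpha = \lambda\alpha$, the radial part $a(r) = c\, r^{\lambda}$ for a constant $c$ and $\beta(r)$ is an $r$-dependent $1$-form on the link whose scaling is likewise fixed; more precisely, I would expand $\alpha$ in the natural splitting of $1$-forms on the cone into $dr$-components and link components, and use $L_{r\partial_r}\alpha = \lambda\alpha$ to force each component to be $r^{\lambda}$ (respectively $r^{\lambda}$ after accounting for the $r^2$ in the metric) times an $r$-independent object on $L$. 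So $\alpha$ is completely determined by a function $f$ and a $1$-form $\gamma$ on $L$, modulo the homogeneity weight $\lambda$.

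Next I would feed the two conditions $d^*\alpha = 0$ and $d^*d\alpha = 0$ (equivalently $dd^*\alpha + d^*d\alpha = 0$ with $d^*\alpha$ harmonic-function data) through this ansatz. The cone Laplacian on $1$-forms separates: the radial ODE in $r$ has a characteristic exponent governed by $\lambda$, while the angular part is controlled by the Hodge Laplacian $\Delta_L$ acting on $f$ and on $\gamma$. The homogeneity fixes the radial exponent to be exactly $\lambda$, so harmonicity becomes an eigenvalue condition: $f$ and $\gamma$ must be eigendata of the relevant operators on $L$ with eigenvalues pinned by $\lambda$ via the indicial relation. This is where the hypothesis $\lambda \in [0,1)$ enters decisively. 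The Bochner formula on the cone, together with $\Ric(g_0) \geq 0$, should force the co-closed, non-exact ``pure link'' part $\gamma$ to vanish in this low range: a nontrivial divergence-free $1$-form on $L$ would correspond, by the indicial/eigenvalue computation, to a rate $\lambda \geq 1$, contradicting $\lambda < 1$. Concretely, the smallest rate at which a genuinely non-gradient harmonic $1$-form can appear is bounded below by the first nonzero eigenvalue estimate coming from Ricci nonnegativity (this is the Lichnerowicz-Obata input), which rules out the $[0,1)$ window for anything but gradients.

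Having eliminated the link $1$-form piece, what survives is exactly the part of $\alpha$ corresponding to $du$ for a radial-times-eigenfunction $u$. I would then read off that $u$ is itself homogeneous of rate $\lambda + 1$ (one power of $r$ higher because $d$ lowers the rate by one), and that $\Delta u = d^*du = 0$ follows from $d^*\alpha = 0$ combined with $d\alpha = 0$ — and here I note that closedness $d\alpha = 0$ is not assumed but \emph{derived}: once the co-closed link contribution is gone, the remaining gradient piece is automatically closed, which is the surprising conclusion flagged in the statement. The rate of $u$ lies in $[1,2)$, and by the Lichnerowicz-Obata remark quoted just before the lemma, such a harmonic function has rate at least $1$ with the stated rigidity.

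The main obstacle I anticipate is the bookkeeping in the separation-of-variables step: correctly identifying how the cone Hodge Laplacian on $1$-forms decomposes over the link, matching indicial exponents to $\lambda$, and showing that the Bochner/Ricci argument genuinely excludes a coclosed angular $1$-form in the full half-open range $[0,1)$ rather than just at the endpoints. In particular I would need the sharp eigenvalue lower bound on coexact $1$-forms on $L$ that $\Ric(g_0) \geq 0$ provides, and I would want to double-check the boundary case $\lambda = 0$ (where $\alpha$ should turn out to be parallel/trivial) separately, since the indicial roots can be degenerate there.
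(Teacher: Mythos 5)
Note first that the paper does not actually prove this lemma: it is imported verbatim from Cheeger--Tian \cite[Lemma 7.27]{Cheeger}, and the only related material in the text is Remark \ref{r:hmg1forms}, which sketches the separation of variables from \cite[(2.14), (2.15)]{Cheeger} in order to rule out $\log$ terms. So the comparison is really against the Cheeger--Tian argument that the paper points to. Your overall strategy is the same as theirs: write the homogeneous form in terms of link data, Hodge-decompose the angular component, and use ${\rm Ric}\geq 0$ to kill everything except the gradient branch. That is the right plan, and your observation that closedness of $\alpha$ is a conclusion rather than a hypothesis is exactly the point the paper emphasizes.

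However, as written the proposal has a genuine gap precisely at the step you yourself flag as the ``main obstacle'': the quantitative input that excludes the non-gradient branch on the whole interval $[0,1)$ is asserted, not established, and the tool you invoke for it is the wrong one. Lichnerowicz--Obata controls the \emph{function} branch (eigenfunctions of $\Delta_L$, hence rates of harmonic functions on the cone being $\geq 1$, with equality only in the Euclidean case); it says nothing about the branch of homogeneous harmonic $1$-forms built from \emph{coexact} eigen-$1$-forms on the link. For that branch one needs two separate ingredients: (a) the explicit indicial roots of the separated radial ODE system attached to a coexact eigenform $\gamma$ on $L$ with $\Delta_L\gamma=\nu\gamma$, and (b) the Bochner-type lower bound $\nu\geq 2(m-2)$ for coexact $1$-forms on a link with ${\rm Ric}_L\geq(m-2)g_L$, whose equality case (Killing fields on the round sphere, giving the rate-$1$ forms $x_i\,dx_j-x_j\,dx_i$ on $\R^m$) shows the threshold $\lambda=1$ is sharp. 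Only the combination of (a) and (b) shows that this branch contributes no rates in $[0,1)$. Separately, the branch coming from bona fide harmonic $1$-forms on $L$ must be eliminated by $b_1(L)=0$, which is again Bochner but a different application; your single phrase about ``divergence-free $1$-forms on $L$'' conflates these two cases, which behave differently in the indicial analysis. Finally, a small bookkeeping error: with the paper's conventions a $1$-form of rate $\lambda$ satisfies $L_{r\partial_r}\alpha=(\lambda+1)\alpha$, not $\lambda\alpha$, since $|dr|_{g_0}=1$ while link $1$-forms pick up a factor of $r$; this off-by-one must be tracked consistently or the indicial roots in (a) will come out wrong.
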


\begin{remark}\label{r:hmg1forms}
As for harmonic functions, there are only very few possibilities for harmonic $1$-forms $\alpha$ on a cone $C$ \emph{not} to be sums of homogeneous ones, or in other words, to contain any $\log$ terms at all. Working through the relevant separation of variables based on \cite[(2.14), (2.15)]{Cheeger}, one finds that the only possible sources of $\log$ terms are $\dim C = 3$
and $\alpha = r^{-(1/2)}(\log r)(\kappa\, dr + 2r\, d\kappa)$ with $\kappa$ a $\frac{3}{4}$-eigenfunction on the link $L$, or $\dim C= 4$ and $\alpha = \frac{1}{r}(\log r)\eta$ with $\eta$ the radially parallel extension of a harmonic $1$-form on $L$. Neither of these occurs if ${\rm Ric} \geq 0$. See also \cite[p.~546]{Cheeger}.
\end{remark}

By applying Lemma \ref{l:slowgrowth1form}
to $\alpha = d^cv$ with $v$ harmonic, we can draw the following simple corollary which, to our knowledge, has not been observed before.

\begin{corollary}\label{c:ph}
Any homogeneous harmonic function of rate in $[1,2)$ on a K\"ahler cone with ${\rm Ric} \geq 0$ must  already be pluriharmonic.
\end{corollary}

We now wish to lift Lemma \ref{l:slowgrowth1form} from cones to AC manifolds. This requires some preparations.

\begin{lemma}\label{l:transplant}
Let $(M^m,g)$ be an {\rm AC} Riemannian manifold $(m > 2)$ with tangent cone $(C,g_0)$.

{\rm (i)} Let $\alpha$ be a harmonic $1$-form of rate $\lambda$ on $M$.
Then $\alpha = \alpha_0 + \beta$ outside a compact set, where $\alpha_0$ is a harmonic $1$-form of rate $\lambda$ on $C$ and  $\beta$ has rate at most $\lambda-\epsilon_0$.

{\rm (ii)} Let $u_0$ be a harmonic function of rate $\lambda  > 0$ on $C$. Then there exists a harmonic function $u$ of rate $\lambda$ on $M$ such that $u = u_0 + v$ outside a compact set, with $v$ of rate at most $\lambda-\epsilon_0$.
\end{lemma}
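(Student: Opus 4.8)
The plan is to treat the two parts by the same mechanism---transport the problem to the cone, where the geometry is homogeneous and separation of variables applies---but to run it in opposite directions. Part (ii) is the more elementary of the two, so I would dispatch it first. Identify $M\setminus K'$ with $C\setminus K$ via $\Phi$ and let $\tilde u_0$ be any smooth extension to $M$ of $u_0$, so that $\tilde u_0 = u_0$ outside a compact set. Since $u_0$ is $g_0$-harmonic and $\Phi^*g - g_0 = O(r^{-\epsilon})$ with $g_0$-derivatives, the difference of the two Laplacians applied to a function of rate $\lambda$ produces an error of rate $\lambda - 2 - \epsilon$; hence $\Delta_g\tilde u_0 \in C^\infty_{\lambda-2-\epsilon}(M)$. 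Now choose $\epsilon_0 \in (0,\epsilon]$ small enough that $\lambda - \epsilon_0 \notin \mathcal{P}$ (possible since $\mathcal{P}$ is discrete), and note $\lambda - \epsilon_0 > 0 > 2 - m$ because $\lambda > 0$ and $m > 2$. By Theorem \ref{surjectivelaplacian} the scalar Laplacian maps $C^\infty_{\lambda-\epsilon_0}(M)$ onto $C^\infty_{\lambda-2-\epsilon_0}(M) \supseteq C^\infty_{\lambda-2-\epsilon}(M)$, so we may solve $\Delta_g v = -\Delta_g\tilde u_0$ with $v \in C^\infty_{\lambda-\epsilon_0}(M)$. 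Then $u := \tilde u_0 + v$ is globally $g$-harmonic, equals $u_0 + v$ outside a compact set, and has rate $\lambda$ since $v$ has rate at most $\lambda - \epsilon_0 < \lambda$.

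For Part (i) I would run the construction in reverse, which forces me to \emph{produce} the leading cone term rather than merely correct one away. Pull $\alpha$ back to $C\setminus K$; since $\alpha$ is $g$-harmonic and the two Laplacians differ by coefficients of size $O(r^{-\epsilon})$, the pulled-back form satisfies $\Delta_{g_0}(\Phi^*\alpha) = h$ with $h$ of rate $\lambda - 2 - \epsilon$. First I would kill this error: using the cone's own solvability theory (separation of variables and the radial ODE analysis underlying Theorems \ref{exceptional}--\ref{surjectivelaplacian}, applied to the Hodge Laplacian on $1$-forms), solve $\Delta_{g_0}\sigma = h$ with $\sigma$ of rate at most $\lambda - \epsilon_0$, after shrinking $\epsilon_0$ so that $\lambda - \epsilon_0$ is non-exceptional. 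Then $\Phi^*\alpha - \sigma$ is genuinely $g_0$-harmonic outside a compact set and of rate at most $\lambda$.

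The crux is then to expand this honest cone-harmonic $1$-form into homogeneous pieces. By the Lockhart--McOwen/Marshall expansion theory for Laplace-type operators, a $\Delta_{g_0}$-harmonic $1$-form of rate $\le \lambda$ is, outside a compact set, an asymptotic sum of homogeneous harmonic $1$-forms whose rates are the exceptional weights $\le \lambda$ (possibly with $\log$ terms). I would let $\alpha_0$ collect the top cluster, namely all components of rate exactly $\lambda$; this is a harmonic $1$-form of rate $\lambda$ on $C$, and after shrinking $\epsilon_0$ to the gap down to the next exceptional weight (so that $(\lambda-\epsilon_0,\lambda)\cap\mathcal{P}=\emptyset$), everything else has rate at most $\lambda - \epsilon_0$. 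Setting $\beta := \Phi^*\alpha - \alpha_0 = \sigma + (\textrm{lower homogeneous terms})$ and transporting back by $\Phi$ yields $\alpha = \alpha_0 + \beta$ with the asserted rates.

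I expect the expansion step of Part (i) to be the main obstacle, for two reasons. First, the clean Fredholm and surjectivity statements available to me (Theorems \ref{exceptional} and \ref{surjectivelaplacian}) are stated for the scalar Laplacian, so I must invoke the corresponding weighted theory for the Hodge Laplacian on $1$-forms; this is covered by the same Lockhart--McOwen/Marshall framework but should be cited carefully. Second, the asymptotic expansion can a priori contain $\log$ terms, which would spoil the clean homogeneity of the leading part; these must be controlled via the separation-of-variables computation recorded in Remark \ref{r:hmg1forms} (and do not arise at all under ${\rm Ric}\geq 0$). The bookkeeping that makes a single $\epsilon_0 \in (0,1)$ work is then routine, once one observes that the relevant exceptional weights lie in a bounded range and $\mathcal{P}$ is discrete.
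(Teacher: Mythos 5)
Your argument is correct and takes essentially the same route as the paper: part (ii) is verbatim the extension-plus-Theorem~\ref{surjectivelaplacian} argument, and part (i) is the same correction of $\alpha$ by solving the $1$-form Poisson equation on the exact cone via separation of variables (the paper's formula \eqref{e:sov}). The only difference is that you additionally expand the resulting $g_0$-harmonic form into homogeneous pieces; the lemma as stated does not require this, and the paper defers that step (via Remark~\ref{r:hmg1forms}) to the proof of Theorem~\ref{t:slowgrowth1form}.
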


\begin{proof}
(i) It suffices to solve the equation $\Delta_{g_0}\beta = -\Delta_{g_0}\alpha = O(r^{\lambda-2-\epsilon_0})$ for $\beta$ of rate at most $\lambda - \epsilon_0$, defined outside a large compact set.
There are no obstructions to doing this: Using (2.14), (2.15) of \cite{Cheeger} and the spectral decomposition of the $1$-form Laplacian on $L$, one can split up $\Delta_{g_0}\beta = -\Delta_{g_0}\alpha$ as an infinite sequence of Euler-Cauchy type ODE's.
These are easy to solve individually, with the correct behavior at infinity.
Summing the solutions is not difficult either because $-\Delta_{g_0}\alpha$ is smooth, 
so that its  Fourier coefficients on each slice decay rapidly in terms of the spectral parameter.

(ii) Extend $u_0$ to a function $\bar{u}_0$ on $M$. Then $\Delta_g \bar{u}_0 \in C^\infty_{\lambda-2-\epsilon_0}(M)$. Since we have $\lambda - 2 - \epsilon_0 > -m$, Theorem \ref{surjectivelaplacian} tells us that there exists $v$ of rate at most $\lambda-\epsilon_0$ such that $\Delta_g v = -\Delta_g\bar{u}_0$. (This type of argument is quite common; see e.g.~\cite{cz, donn} for very similar results and applications.)
\end{proof}

The following is then the promised extension of Lemma \ref{l:slowgrowth1form} from cones to AC manifolds. Notice that the Bochner formula already tells us that $M$ does not admit any nonzero harmonic $1$-forms of negative rate; this is in fact an important ingredient for the proof of the theorem.

\begin{theorem}\label{t:slowgrowth1form}
Let $M$ be an {\rm AC} Riemannian manifold with ${\rm Ric} \geq 0$. Let $\alpha$ be a harmonic $1$-form of rate $\lambda$ on $M$.
If $\lambda \in [0,1)$, then $\alpha = du$ for some harmonic function $u$ of rate $\lambda + 1$.
\end{theorem}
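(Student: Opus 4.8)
The plan is to transplant $\alpha$ to the tangent cone $(C,g_0)$, recognize its leading term as exact via the Cheeger--Tian Lemma~\ref{l:slowgrowth1form}, lift the resulting potential back to $M$ as a harmonic function, subtract its differential, and iterate until the residual harmonic $1$-form is killed by the Bochner obstruction.

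Concretely, I would first invoke Lemma~\ref{l:transplant}(i) to write $\alpha = \alpha_0 + \beta$ outside a compact set, with $\alpha_0$ a $g_0$-harmonic $1$-form of rate $\lambda$ on $C$ and $\beta$ of rate at most $\lambda - \epsilon_0$. Since ${\rm Ric}(g_0) \geq 0$, Remark~\ref{r:hmg1forms} rules out $\log$ terms, so the leading asymptotics of $\alpha_0$ are given by a homogeneous harmonic $1$-form $\alpha_0^{\mathrm{top}}$ of rate $\lambda$, with $\alpha_0 - \alpha_0^{\mathrm{top}}$ of strictly smaller rate. As $\lambda \in [0,1)$, Lemma~\ref{l:slowgrowth1form} gives $\alpha_0^{\mathrm{top}} = du_0$ for a homogeneous harmonic function $u_0$ of rate $\lambda+1$ on $C$. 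Lemma~\ref{l:transplant}(ii) then produces a harmonic function $u_1$ of rate $\lambda + 1$ on $M$ with $u_1 = u_0 + v$, where $v$ has rate at most $\lambda + 1 - \epsilon_0$. Because $u_1$ is harmonic, $du_1$ is a harmonic $1$-form, so $\alpha^{(2)} := \alpha - du_1$ is harmonic; and since the top layer cancels, $\alpha^{(2)} = (\alpha_0 - \alpha_0^{\mathrm{top}}) + \beta - dv$ has rate strictly below $\lambda$.

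I would then iterate this construction, obtaining harmonic functions $u_2, u_3, \dots$ on $M$ and harmonic $1$-forms $\alpha^{(j+1)} = \alpha^{(j)} - du_j$ of strictly decreasing rate $\lambda^{(j)}$. Two facts force termination. First, the Bochner formula forbids nonzero harmonic $1$-forms of negative rate on $M$, so every nonzero $\alpha^{(j)}$ has rate in $[0,\lambda] \subset [0,1)$; in particular Lemma~\ref{l:slowgrowth1form} remains applicable at each stage. Second, transplanting $\alpha^{(j)}$ by Lemma~\ref{l:transplant}(i) shows that $\lambda^{(j)}$ equals the rate of some $g_0$-harmonic $1$-form, hence is one of the discretely many exceptional rates cut out by the spectrum on $L$. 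A strictly decreasing sequence inside the finite set of exceptional rates in $[0,\lambda]$ cannot be infinite, so $\alpha^{(N)} = 0$ for some $N$. Summing telescopically, $\alpha = d(u_1 + \dots + u_{N-1}) = du$ with $u$ harmonic and of rate $\lambda + 1$, its top layer being $u_1$.

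The main obstacle is organizing this induction so that it genuinely closes: one must lift $u_0$ to an exactly harmonic (not merely asymptotically harmonic) function on $M$ so that each subtracted $du_j$ stays within the class of harmonic $1$-forms, verify that the residual rate really drops to a strictly smaller exceptional value at every step rather than stalling, and confirm that the combination of Bochner vanishing with the discreteness of exceptional rates in $[0,1)$ terminates the process. Licensing the repeated use of the homogeneous statement in Lemma~\ref{l:slowgrowth1form} is exactly what requires the exclusion of $\log$ terms via Remark~\ref{r:hmg1forms}.
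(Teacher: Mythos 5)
Your proposal is correct and follows essentially the same route as the paper: transplant via Lemma~\ref{l:transplant}(i), reduce to the homogeneous case by Remark~\ref{r:hmg1forms}, integrate the leading term with Lemma~\ref{l:slowgrowth1form}, lift the potential with Lemma~\ref{l:transplant}(ii), and iterate until the Bochner formula and the strong maximum principle kill the residual. The only cosmetic difference is that the paper terminates the iteration by noting the rate drops by a universal $\epsilon_0$ at each stage (rather than appealing to discreteness of the exceptional rates), which sidesteps the minor issue of whether the rate of each residual is exactly an exceptional value.
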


\begin{proof}
By Lemma \ref{l:transplant}(i), $\alpha = \alpha_0 + \beta$ outside a compact set, where $\alpha_0$ is harmonic of rate $\lambda$ on the cone and $\beta$ has rate at most $\lambda - \epsilon_0$. Since harmonic $1$-forms on $C$ are sums of homogeneous ones by Remark \ref{r:hmg1forms}, we can assume that $\alpha_0$ is in fact homogeneous. Then, by Lemma \ref{l:slowgrowth1form}, 
${\alpha}_0 = du_0$ for a harmonic function $u_0$ of rate $\lambda + 1$ on $C$. By Lemma \ref{l:transplant}(ii), there exists a harmonic function $u$ of rate $\lambda + 1$
on $M$ such that $u = u_0 + v$ away from a compact set, with $v$ of rate at most $\lambda + 1 - \epsilon_0$. Thus,
$\alpha = du + \hat{\alpha}$ on $M$, where $\hat{\alpha}$ has rate at most $\lambda - \epsilon_0$ and is necessarily harmonic. 

We now repeat the argument with $\hat{\alpha}$ in place of $\alpha$ for as long as the rate of the remainder stays nonnegative. Since the rate of the remainder drops by at least some universal $\epsilon_0$ at each stage, we can assume that $\alpha = du + \hat{\alpha}$ after finitely many iterations, where 
$u$ is harmonic of rate $\lambda + 1$ and the harmonic $1$-form $\hat{\alpha}$ has a negative rate.
On the other hand, since ${\rm Ric} \geq 0$, the Bochner formula tells us that $\Delta|\hat{\alpha}|^2 \geq 0$ globally on $M$. Thus, $\hat{\alpha} = 0$ by the strong maximum principle. 
\end{proof}

We again have a direct consequence, which we need for the proof of Theorem \ref{t:uniqueness}.

\begin{corollary}\label{c:subquadratic}
Any harmonic function of rate strictly less than $2$ on an {\rm AC} K{\"a}hler manifold with ${\rm Ric} \geq 0$ must already be pluriharmonic.
\end{corollary}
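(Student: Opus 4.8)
The plan is to imitate the proof of Corollary \ref{c:ph}, simply replacing the cone-level Lemma \ref{l:slowgrowth1form} by its AC upgrade, Theorem \ref{t:slowgrowth1form}. Let $v$ be harmonic of rate $\lambda < 2$ on the AC K\"ahler manifold $(M,g,J)$, and consider the $1$-form $\alpha := d^{c}v = J\,dv$. The first task is to verify that $\alpha$ is a harmonic $1$-form. Since $v$ is harmonic, $d^{*}\,dv = 0$, so on any Riemannian manifold $dv$ is itself a harmonic $1$-form: $(dd^{*}+d^{*}d)(dv) = d(d^{*}\,dv) + d^{*}(d\,dv) = 0$. On a K\"ahler manifold $J$ is parallel and the Ricci tensor is $J$-invariant (so that $\Ric$ commutes with $J$ as an endomorphism), whence both the rough Laplacian $\nabla^{*}\nabla$ and the Weitzenb\"ock curvature term commute with $J$ on $1$-forms. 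Therefore the Hodge Laplacian commutes with $J$, and $\alpha = J\,dv$ is harmonic as well.

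Next comes the rate bookkeeping. Because $J$ is $g$-parallel and an isometry, $|\nabla^{k}\alpha| = |\nabla^{k+1}v|$ pointwise, so $\alpha$ has rate $\lambda - 1 < 1$. I would then argue that any harmonic $1$-form of rate strictly less than $1$ on an AC manifold with $\Ric \geq 0$ is automatically closed, splitting into two regimes. If the rate of $\alpha$ lies in $[0,1)$, i.e.\ if $\lambda \in [1,2)$, then Theorem \ref{t:slowgrowth1form} produces a harmonic function $w$ with $\alpha = dw$, so that $d\alpha = 0$. If instead the rate of $\alpha$ is negative, i.e.\ if $\lambda < 1$, then the Bochner consideration recorded just before Theorem \ref{t:slowgrowth1form} (namely that $M$ carries no nonzero harmonic $1$-form of negative rate) forces $\alpha = 0$, and again $d\alpha = 0$.

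In either case $d\alpha = 0$. Since $d\alpha = d(d^{c}v) = 2i\partial\bar\partial v$ is a fixed nonzero multiple of $i\partial\bar\partial v$, the closedness of $\alpha$ is precisely the vanishing of $i\partial\bar\partial v$; that is, $v$ is pluriharmonic, as desired. The genuinely load-bearing input is Theorem \ref{t:slowgrowth1form}, already established; all this argument must supply on top of it are the two elementary K\"ahler-geometric facts that $d^{c}v$ is harmonic and that its rate is exactly one less than that of $v$. The only point requiring a little care—and the closest thing to an obstacle—is checking that the two regimes genuinely cover every $\lambda < 2$, with the threshold $\lambda = 1$ (rate $0$ for $\alpha$) falling safely inside the range $[0,1)$ handled by Theorem \ref{t:slowgrowth1form} rather than slipping between the cases.
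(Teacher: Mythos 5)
Your proposal is correct and follows exactly the route the paper intends: apply Theorem \ref{t:slowgrowth1form} (or, for negative rate, the Bochner vanishing noted just before it) to the harmonic $1$-form $\alpha = d^c v$, exactly as Corollary \ref{c:ph} applies Lemma \ref{l:slowgrowth1form} on the cone. The supporting checks (harmonicity of $d^c v$ via $J$-invariance of the Weitzenb\"ock formula, the rate drop by one, and the case split covering all rates below $1$) are all sound.
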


\begin{remark}\label{r:liremark}
Corollary 5 in Li \cite{li} states that $o(r^2)$ harmonic functions on \emph{every} complete K{\"a}hler manifold 
with ${\rm Ric} \geq 0$ are pluriharmonic. 
This is false:~Consider the Taub-NUT manifold, which is complete hyper-K{\"a}hler of real dimension $4$ with cubic volume growth, with a triholomorphic Killing field $X$ that rotates the circles at infinity \cite{mintaubnut}. Fix a parallel complex structure $J$ and define $u$ by $du$ $=$ $X\,\llcorner\,\omega$. Then $u$ is harmonic of linear growth (asymptotic to the linear function on $\R^3$ determined  by $J \in S^2 \subset \R^3$), but not $J$-pluriharmonic because $i\partial\bar{\partial} u = \nabla X$ is a nonzero $L^2$-harmonic $2$-form.

The proof of Li's theorem relies on an incorrect Bochner formula \cite[(14)]{li}, whose correct version \cite[Lemma 4.1]{liwang} involves the full curvature tensor. As a consequence, Li's theorem does hold under the assumption of  nonnegative bisectional rather than Ricci curvature.
\end{remark}

A second consequence of Theorem \ref{t:slowgrowth1form} which is important for us is the following $i\partial\bar{\partial}$-lemma.

\begin{theorem}\label{t:iddbar}
Let $M$ be an {\rm AC} K{\"a}hler manifold with ${\rm Ric} \geq 0$. Let $\eta$ be a $d$-exact real $(1,1)$-form on $M$ with $\eta \in C^\infty_{-\epsilon}(M)$ for some $ \epsilon \in (0,\epsilon_0)$. Then $\eta = i\partial\bar{\partial}u$ with $u \in C^\infty_{2-\epsilon}(M)$.
\end{theorem}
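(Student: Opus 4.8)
The plan is to peel off the trace part of $\eta$ by a scalar Poisson solve, recognize the remainder as a harmonic $2$-form, and then reduce the vanishing of that remainder to the $1$-form statement in Theorem~\ref{t:slowgrowth1form}. First I would solve for the trace. Since $\Lambda\,i\partial\bar\partial=\tfrac12\Delta$ on functions (cf.\ Remark~\ref{r:existence}(i), where $\Lambda$ is the trace against $\omega$) and $\eta\in C^\infty_{-\epsilon}(M)$ gives $\Lambda\eta\in C^\infty_{-\epsilon}(M)$, I would solve $\Delta u = 2\Lambda\eta$. After shrinking $\epsilon_0$ so that $(2-\epsilon_0,2)\cap\mathcal{P}=\emptyset$ --- possible because $\mathcal{P}$ is discrete and, by Remark~\ref{r:ricci_weights}, meets $(1,2)$ in at most the single point $\nu_0$ --- the weight $2-\epsilon$ is non-exceptional and exceeds $2-m$, so Theorem~\ref{surjectivelaplacian} produces $u\in C^\infty_{2-\epsilon}(M)$. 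Set $\theta:=\eta-i\partial\bar\partial u$.

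Next I would check that $\theta$ is a harmonic $2$-form. It is a real $(1,1)$-form of rate at most $-\epsilon$, it is $d$-closed (both $\eta$ and $i\partial\bar\partial u$ are), and it is trace-free by the choice of $u$. Closedness gives $\partial\theta=\bar\partial\theta=0$, so the K\"ahler identities $[\Lambda,\bar\partial]=-i\partial^*$ and $[\Lambda,\partial]=i\bar\partial^*$ yield $\partial^*\theta=\bar\partial^*\theta=0$ once $\Lambda\theta=0$ is used; hence $d^*\theta=0$ and $\theta$ is harmonic. Moreover $\theta$ is $d$-exact, since $\eta$ is $d$-exact and $i\partial\bar\partial u=\tfrac12\,d\,d^{c}u$. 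It remains to prove $\theta=0$.

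The heart of the matter is to realize $\theta$ as $d\sigma$ for a harmonic $1$-form $\sigma$ of rate $1-\epsilon\in(0,1)$. Granting a \emph{global} primitive $\zeta$ of rate $1-\epsilon$ with $d\zeta=\theta$, the rest is clean: solving $\Delta\psi=d^*\zeta$ (with $d^*\zeta\in C^\infty_{-\epsilon}(M)$) for $\psi\in C^\infty_{2-\epsilon}(M)$ via Theorem~\ref{surjectivelaplacian}, and setting $\sigma:=\zeta-d\psi$, gives $d\sigma=\theta$ and $d^*\sigma=d^*\zeta-\Delta\psi=0$ (on functions $\Delta=d^*d$). Since $d^*\theta=0$, this forces $\Delta\sigma=d^*d\sigma=d^*\theta=0$, so $\sigma$ is harmonic of rate at most $1-\epsilon$. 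Theorem~\ref{t:slowgrowth1form} (or, if the rate is negative, the Bochner vanishing for harmonic $1$-forms of negative rate noted just before it) then gives $\sigma=dw$, whence $\theta=d\sigma=0$ and $\eta=i\partial\bar\partial u$ with $u\in C^\infty_{2-\epsilon}(M)$, as required.

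The main obstacle is producing the primitive $\zeta$ at the correct sub-unit rate. Inverting the Hodge Laplacian on $2$-forms is circular here, as it reintroduces precisely a harmonic $2$-form error of negative rate --- the very object we cannot yet control. I would instead build $\zeta$ directly on the conical end by separation of variables, exactly as in the proof of Lemma~\ref{l:transplant}(i): expand $\theta$ into cone eigenmodes and integrate the resulting ODEs, the point being that the only mode admitting no primitive of rate $1-\epsilon$ represents a class in $H^2(L)$, which is absent because $\theta$ is globally exact. One then extends this end-primitive over $M$ by a cutoff and absorbs the resulting compactly supported exact discrepancy. The genuinely essential geometric input throughout is Theorem~\ref{t:slowgrowth1form}, that is, the Cheeger--Tian lemma combined with $\Ric\ge 0$.
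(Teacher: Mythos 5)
Your proposal is correct, and it rests on the same two pillars as the paper's proof --- the construction of a primitive of sub-linear rate on the conical end (using $H^1(L)=0$ and Hodge theory with parameters on the slices), a scalar Poisson solve via Theorem \ref{surjectivelaplacian}, and an application of Theorem \ref{t:slowgrowth1form} to a harmonic $1$-form of rate in $[0,1)$ --- but the reduction in the middle is genuinely different. The paper first builds the decaying primitive $\zeta$ of $\eta$ itself and then works with its $(0,1)$-part: it solves $\frac{1}{2}\Delta u = \bar\partial^*\zeta^{0,1}$, observes that $\xi=\zeta^{0,1}-\bar\partial u$ is $\bar\partial$- and $\bar\partial^*$-closed hence harmonic, identifies $\xi=\bar\partial v$ with $v$ antiholomorphic via Theorem \ref{t:slowgrowth1form}, and reads off $2\,{\rm Im}(u)$ as the potential. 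You instead produce the potential at the outset by solving $\Delta u=2\Lambda\eta$, recognize the error $\theta=\eta-i\partial\bar\partial u$ as a trace-free closed --- hence harmonic and exact --- real $(1,1)$-form, and kill it by exhibiting a co-closed (hence harmonic) primitive $1$-form $\sigma$ of rate $1-\epsilon$ and applying Theorem \ref{t:slowgrowth1form} to see that $\sigma$ is exact. Your route stays within real Hodge theory and makes the role of Theorem \ref{t:slowgrowth1form} (annihilating a decaying harmonic $2$-form) more transparent; the paper's route avoids the trace/trace-free decomposition and delivers the potential together with its rate in one stroke. One small point deserves care: when you extend the end-primitive of $\theta$ over $M$ by a cutoff, the compactly supported exact discrepancy has \emph{some} global primitive, but that primitive need not decay; since the discrepancy vanishes on the end and $H^1(L)=0$, its primitive is $df$ there and can be corrected to a compactly supported one by subtracting $d(\chi f)$. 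This is precisely why the paper cuts off the given global primitive $\nu$ of $\eta$ \emph{before} running the separation of variables, so that the data vanishes identically for small $r$ and no patching is required. (Also, with the paper's convention $\Delta=-d^*d$ on functions your equation for $\psi$ should read $\Delta\psi=-d^*\zeta$, which is immaterial.)
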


\begin{proof} (1) We first wish to prove that $\eta = d\zeta$, where $\zeta \in C^\infty_{1-\epsilon}(M)$. By assumption,
$\eta = d\nu$ for some $1$-form $\nu$ on $M$. Let $K$ be a large compact set, let $\chi$ be a function with $\chi \equiv 0$ on $K$ and $\chi \equiv 1$ on the complement of some larger compact set, and put $\tilde{\nu} = \chi\nu$ and $\tilde{\eta} = d\tilde{\nu}$. It suffices to construct a $1$-form $\tilde{\zeta} \in C^\infty_{1-\epsilon}(M)$ such that $\tilde{\zeta} \equiv 0$ on $K$ and $d\tilde{\zeta} =\tilde{\eta}$ because then we can set
$\zeta =\nu - \tilde{\nu} + \tilde{\zeta}$.

\text
We write the tangent cone of $M$ as $(0,\infty) \times L$ and indicate the degree of a form by a subscript. Then $\tilde{\eta} = \tilde{\eta}_1 \wedge dr + \tilde{\eta}_2$, where
$\tilde{\eta}_i$ is a $1$-parameter family of $i$-forms on $L$ that vanish identically for 
all $r$ less than some large constant. Similarly,
$\tilde{\nu} = \tilde{\nu}_0 \,dr + \tilde{\nu}_1$, and our assumption that $d\tilde{\nu} = \tilde{\eta}$ can be rewritten as $d_L \tilde{\nu}_0 + \partial_r\tilde{\nu}_1= \tilde{\eta}_1 $ and $d_L \tilde{\nu}_1 = \tilde{\eta}_2$. Consequently, we need to find new solutions $\tilde{\zeta}_0, \tilde{\zeta}_1$ to these equations that still vanish unless $r \gg 1$, but
with growth control at infinity.

The key is to observe that $H^1(L) = 0$ because ${\rm Ric}_L > 0$. Rescaling and applying Hodge theory 
with parameters thus yields a unique 
solution $\tilde{\zeta}_1$ to the second equation, with $d_L^*\tilde{\zeta}_1 = 0$, $\tilde{\zeta}_1 \equiv 0$ unless $r \gg 1$, and $\tilde{\zeta}_1 = O(r^{1-\epsilon})$. 
The first equation is now similar because there must exist $\tilde{\xi}_0$ with $\tilde{\nu}_1 = \tilde{\zeta}_1 + d_L \tilde{\xi}_0$, so that 
 $\tilde{\eta}_1 - \partial_r\tilde{\zeta}_1 = \tilde{\eta}_1 - \partial_r\tilde{\nu}_1  + \partial_r d_L \tilde{\xi}_0 = d_L(\tilde{\nu}_0 + \partial_r\tilde{\xi}_0)$ is indeed $d_L$-exact.

(2) We have $\eta = d\zeta$ with $\zeta \in C^\infty_{1-\epsilon}(M)$ by (1). By Theorem \ref{surjectivelaplacian}, the equation $\frac{1}{2}\Delta u = \bar{\partial}^*\zeta^{0,1}$ 
has a $\C$-valued solution $u \in C^\infty_{2-\epsilon}(M)$. (This is unique up to $\C$-valued pluriharmonics by Corollary \ref{c:subquadratic}, but we don't need to {make use of this fact here}.) Consider the $(0,1)$-form $\xi = \zeta^{0,1} - \bar{\partial} u \in C^\infty_{1-\epsilon}(M)$.
This satisfies $\bar{\partial}\xi = \bar{\partial}^*\xi = 0$. Thus, in particular, $\Delta\xi = 0$.
Theorem \ref{t:slowgrowth1form} now tells us that $\xi = dv = \bar{\partial}v$,
where $v \in C^\infty_{2-\epsilon}(M)$ is $\C$-valued, harmonic, and (obviously) antiholomorphic. This then proves that 
$2{\rm Im}(u + v)$, and in fact $2{\rm Im}(u)$ already, is a potential for $\eta$ with the right asymptotics.
\end{proof}

\begin{remark}
The weighted $i\partial\bar{\partial}$-lemma in Theorem \ref{t:iddbar} can perhaps be viewed as an effective or metric version of the purely complex analytic $i\partial\bar{\partial}$-lemma from Corollary \ref{c:ideldelbar}(i).
\end{remark}

We are now in good shape to prove the main theorem of this section.

\begin{proof}[Proof of Theorem \ref{t:uniqueness}]
We have $\eta = i\partial\bar{\partial}u$ from Theorem \ref{t:iddbar}, where $u \in C^\infty_{2-\epsilon}(M)$. By assumption, 
$(\omega_2 + i\partial\bar{\partial}u)^n = \omega_2^n$. Thus,
$(\Delta_{g_2}u)\omega_2^n = (i\partial\bar{\partial}u)^2 \wedge \psi$, where $\psi$, as well as its derivatives, are uniformly bounded with respect to $g_2$. Theorem \ref{surjectivelaplacian}, together with an obvious iteration argument, now tells us that $u = u' + u''$, where $u'$ is \emph{harmonic} on $M$ of rate at most $2-\epsilon$ and $u''$ has rate at most $2-2n$. By Corollary \ref{c:subquadratic}, $u'$ is in fact pluriharmonic already, so that $(\omega_2 + i\partial\bar{\partial}u'')^n = \omega_2^n$. Since $u''$ goes to zero at infinity, we can now use the strong maximum principle to deduce that $u'' = 0$.
\end{proof}

\begin{remark} 
The following shows how the above proof fails in a slightly modified situation where the statement of Theorem \ref{t:uniqueness} is in fact false. Consider Taub-NUT with a particular choice of complex structure $J$ as in Remark \ref{r:liremark}, and let $u'$ denote the associated harmonic function of linear growth. Then the complex Monge-Amp{\`e}re equation $(\omega + i\partial\bar{\partial}u)^2 =  \omega^2$ has 
a nontrivial solution $u = u' + u''$ with $u'' = O(r^{-1})$ because $i\partial\bar{\partial}u' \neq 0$, but $\omega \wedge i\partial\bar{\partial}u' = 0$. The metric
$\omega + i\partial\bar{\partial}u = \omega + O(r^{-2})$ differs from $\omega$ by flowing along the $J$-holomorphic vector field $\nabla u'$.
\end{remark}

We close this section with a first application of Theorem \ref{t:uniqueness}; see also Remark \ref{r:crem}.

\begin{corollary}\label{c:whatisc}
Consider the $1$-parameter family $\omega_c$ of Ricci-flat metrics in a given K{\"a}hler class $\mathfrak{k}$ constructed in Theorem \ref{thm:main}.

{\rm (i)} If $\mathfrak{k} = 0$, then $\omega_{c_2} =\frac{c_2}{c_1}\omega_{c_1}$ for all $c_1,c_2 > 0$. In this case, $M$ is necessarily Stein.

{\rm (ii)}  At the other extreme, if $M$ is a crepant resolution of $C$, then $\omega_{c_2} =  \exp((\log \frac{c_2}{c_1}) X)^*\omega_{c_1}$ for all $c_1,c_2 > 0$, where $X$ denotes the lift of the holomorphic vector field $r\partial_r$ from $C$ to $M$.
\end{corollary}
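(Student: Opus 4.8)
The plan is to deduce both statements from the uniqueness result of Theorem~\ref{t:uniqueness}, the common mechanism being: exhibit a Ricci-flat K\"ahler form that lies in the same de Rham class as $\omega_{c_2}$, carries the same volume form, and agrees with $\omega_{c_2}$ to leading order at infinity, so that Theorem~\ref{t:uniqueness} forces it to equal $\omega_{c_2}$. I will use two facts from the proof of Theorem~\ref{thm:main}: each member of the family satisfies $\omega_c^n = c^n\, i^{n^2}\Omega\wedge\bar{\Omega}$, and $\Phi^*\omega_c - c\omega_0 = O(r^{\max\{-2n,\,\nu\}})$ with $g_0$-derivatives. In particular all the $\omega_c$ share the cohomology class $\mathfrak{k}$ and have nonnegative (indeed vanishing) Ricci curvature, so the hypotheses of Theorem~\ref{t:uniqueness} on the reference metric $g_2 = g_{c_2}$ are automatic.

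For Part~(i), I would test the scaled form $\tilde\omega := \tfrac{c_2}{c_1}\omega_{c_1}$. Multiplying a Ricci-flat K\"ahler form by a positive constant keeps it Ricci-flat, and $\tilde\omega^n = (c_2/c_1)^n\,\omega_{c_1}^n = c_2^n\, i^{n^2}\Omega\wedge\bar{\Omega} = \omega_{c_2}^n$. Since $\mathfrak{k}=0$, both $\omega_{c_1}$ and $\omega_{c_2}$ are $d$-exact, hence so is $\eta := \tilde\omega - \omega_{c_2}$; and as the leading terms $\tfrac{c_2}{c_1}\cdot c_1\omega_0 = c_2\omega_0$ of $\tilde\omega$ and $c_2\omega_0$ of $\omega_{c_2}$ cancel, $\eta \in C^\infty_{-\epsilon}(M)$ for some $\epsilon>0$. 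Theorem~\ref{t:uniqueness} then gives $\eta = 0$, i.e.\ $\omega_{c_2} = \tfrac{c_2}{c_1}\omega_{c_1}$. For the Stein assertion I would use that an AC K\"ahler manifold is $1$-convex, hence Stein as soon as it contains no positive-dimensional compact analytic subset. But $\mathfrak{k}=0$ writes $\omega_{c_1} = d\gamma$, so for any compact analytic $Y \subset M$ with $\dim_{\C} Y = k \geq 1$ we would have $\int_Y \omega_{c_1}^k = \int_Y d(\gamma\wedge\omega_{c_1}^{k-1}) = 0$ by Stokes, contradicting $\int_Y \omega_{c_1}^k > 0$; hence no such $Y$ exists and $M$ is Stein.

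For Part~(ii), I would test $F^*\omega_{c_1}$, where $F := \exp((\log\tfrac{c_2}{c_1})X)$. Since $M$ is a crepant resolution $\pi\colon M \to C$, the map $\pi$ is a biholomorphism outside a compact set, the dilation action lifts to a complete holomorphic flow on $M$ generated by $X$, and crepancy yields $\pi^*\Omega_0 = \Omega$ (the two differ by a holomorphic function tending to $1$ at infinity, hence $\equiv 1$ by the maximum principle). As $F$ is isotopic to $\id$ through its own flow, $F^*$ acts trivially on $H^2(M)$, so $[F^*\omega_{c_1}] = \mathfrak{k} = [\omega_{c_2}]$ and $F^*\omega_{c_1} - \omega_{c_2}$ is $d$-exact; and $F^*\omega_{c_1}$ is again Ricci-flat K\"ahler. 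Through $\pi$, $F$ corresponds at infinity to the cone homothety $\nu_t$ with $t = (c_2/c_1)^{1/2}$, under which (using the cone homogeneities $\mathcal{L}_{r\partial_r}\omega_0 = 2\omega_0$, $\mathcal{L}_{r\partial_r}\Omega_0 = n\Omega_0$, and the normalization of $X$) one has $\omega_0 \mapsto (c_2/c_1)\,\omega_0$ and $\Omega_0 \mapsto (c_2/c_1)^{n/2}\,\Omega_0$. Hence $F^*\Omega = (c_2/c_1)^{n/2}\Omega$, so $(F^*\omega_{c_1})^n = F^*(c_1^n\, i^{n^2}\Omega\wedge\bar{\Omega}) = c_2^n\, i^{n^2}\Omega\wedge\bar{\Omega} = \omega_{c_2}^n$, while the leading term of $F^*\omega_{c_1}$ is $c_1\cdot(c_2/c_1)\,\omega_0 = c_2\omega_0$, matching $\omega_{c_2}$. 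Theorem~\ref{t:uniqueness} then gives $F^*\omega_{c_1} = \omega_{c_2}$.

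The routine parts are the exactness and volume-form bookkeeping. The one genuinely delicate step is in Part~(ii): verifying that the lifted field $X$ is complete, so that $F$ and the isotopy to $\id$ are globally defined, and that its flow is asymptotic through $\pi$ to the cone homothety, so that the transformation laws for $\omega_0$ and $\Omega_0$ hold on $M$ rather than merely on $C$. Granting this, the decay $F^*\omega_{c_1} - \omega_{c_2} \in C^\infty_{-\epsilon}(M)$ follows because $\nu_t$ preserves the rate of the subleading $O(r^{\max\{-2n,\,\nu\}})$ part of $\omega_{c_1}$, so that once the leading terms cancel the difference still has negative rate.
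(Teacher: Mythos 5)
Your proof is correct and is exactly the intended argument: the paper presents this corollary as a direct application of Theorem \ref{t:uniqueness} and gives no proof beyond noting that the Stein property in (i) follows from the Remmert reduction theorem, which is precisely your Stokes-plus-Remmert argument, while both parts (i) and (ii) are meant to be settled by exhibiting the candidate form ($\frac{c_2}{c_1}\omega_{c_1}$, resp.\ the pullback under the flow), matching classes, volume forms and leading asymptotics, and invoking uniqueness. The one point to be explicit about in (ii) is the normalization you quietly assume: for the leading terms and the volume forms to match simultaneously, $\exp((\log\frac{c_2}{c_1})X)$ must act near infinity as the homothety $\nu_t$ with $t=(c_2/c_1)^{1/2}$ (equivalently, the real part of the holomorphic field $X$ is $\frac{1}{2}r\partial_r$), which is the reading you adopt and the only one under which the stated formula is consistent, so it is worth recording that convention rather than leaving it to ``the normalization of $X$.''
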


\begin{proof} 
The Stein property in (i) follows from the Remmert reduction theorem; see Appendix \ref{s:ddbar}. For (ii), note that the flow of $r\partial_r$ on $C$ lifts to a flow of biholomorphisms of $M$ by the Riemann removable singularities theorem; the pullback action of any such flow preserves every K\"ahler class.
\end{proof}

Neither of the above mechanisms for creating a family of Calabi-Yau metrics in $\mathfrak{k}$ is available in general. There always exists a unique vector field $X_c$ which is $g_c$-\emph{harmonic} and asymptotic to $r \partial_r$; possibly the metrics $g_c$ just differ by scaling and flowing along $X_c$ viewed as a time-dependent vector field. 
We do not know whether or not this is the right picture in general, but see Remark \ref{r:ehrem}.
\newpage
\section{Crepant resolutions}\label{s:crepres}

Let $(C, g_0, J_0,\Omega_0)$ be an $n$-dimensional Calabi-Yau cone. Recall from Theorem \ref{t:affine} that the metric completion $C \cup \{0\}$ can be naturally endowed with the structure of a normal variety $V$ that admits affine embeddings quasihomogeneous with respect to some $\C^*$-action with positive weights.

In many interesting cases, the variety $V$ will admit \emph{crepant resolutions}. By definition, these are surjective holomorphic maps $\pi: M \to V$, where $M$ is a smooth complex manifold with a nowhere vanishing holomorphic volume form $\Omega$ such that $\pi^*\Omega_0 = \Omega$, and $\pi$ is an isomorphism onto its image away from $E = {\rm Exc}(\pi)$. Crepant resolutions are the most obvious class of spaces to try and apply results such as Theorem \ref{thm:main} to in order to construct complete AC Calabi-Yau metrics.

This problem has been studied extensively; we review the main points of the existing theory in Sections \ref{s:classical}--\ref{s:goto} and make some clarifying remarks. Section \ref{s:flag-ex's} presents some new small resolutions associated with flag varieties of compact Lie groups. Our results from Sections \ref{s:existence}--\ref{s:uniqueness} allow for a quick and uniform treatment, though we suspect that an ODE-based approach must exist as well.

\subsection{Classical examples}\label{s:classical}
Every K{\"a}hler cone is naturally the total space of a negative line bundle over some compact K{\"a}hler orbifold with the zero section removed (or blown down). As such, there is always a canonical way of at least partially resolving the cone by pasting the zero section back in, though, in general, this only produces an orbifold. Based on this idea, Calabi \cite{Cal1} constructed 
many examples of AC Ricci-flat K{\"a}hler manifolds which are in fact almost explicit.

\begin{example}[Calabi]\label{ex:calabi1} Let $D$ be a K{\"a}hler-Einstein Fano manifold of complex dimension $n - 1$, $H$ the maximal root of the canonical bundle of $D$ (so that $H^\iota = K_D$ with $\iota$ the Fano index of $D$), and $L \subset H$ the total space of the corresponding U$(1)$-bundle. Then $\pi_1(L) = 0$, and there exist obvious cyclic quotients $L_k = L/\Z_k$ for all $k \in \N$ such that the K{\"a}hler cones $C_k = C(L_k)$ are resolved by the total spaces $M_k$ of the line bundles $H^k$. The cone $C_k$ (equivalently, the manifold $M_k$) admits a global holomorphic volume form if and only if $k\,|\,\iota$, and this then vanishes to order $\frac{\iota}{k}-1$ along the zero section $E_k \cong D$ in $M_k$. In particular, $M_k \to V_k$ is a \emph{crepant} resolution if and only if $k = \iota$.

Calabi first lifts the K{\"a}hler-Einstein metric from $D$ to a Sasaki-Einstein metric on $L$. Solving an
ODE, he then constructs Ricci-flat K{\"a}hler metrics on $M_k$ for all $k \in \N$ that are AC at infinity, but {that have} a cone angle of $2\pi \frac{\iota}{k}$ along the exceptional divisor $E_k$. In particular, his metric is smooth precisely when $k = \iota$. 
Taking $D$ to be projective space, one obtains an ALE space with tangent cone $\C^n/\Z_n$,
recovering Eguchi-Hanson for $n = 2$. Another interesting example is $D = \P^1 \times \P^1$ ($\iota = 2$), in which case $C_1$ is the ordinary double point, or \emph{conifold}, $z_1^2 + z_2^2 + z_3^2 + z_4^2 = 0$ in $\C^4$.

It is worth noting that Calabi's metrics can be universally written as $i\partial\bar{\partial}\sum_{k = 0}^\infty c_{n,k} r^{2-2nk}$ away from a compact set; the $i\partial\bar{\partial} \log r$ term in \cite[(4.14)]{Cal1} cancels with the $\Phi \circ \pi$ term in \cite[(3.1)]{Cal1}.
\end{example}

Let us now consider the ALE case more generally. For $\Gamma$ a {finite} subgroup of SU$(n)$ acting freely on $S^{2n-1}$, $\C^n/\Gamma$ inherits a parallel {holomorphic} volume form from $\C^n$. However, except for $\Gamma = \Z_n$ as above, Calabi's method would produce an AC Ricci-flat \emph{orbifold} that resolves $\C^n/\Gamma$ only partially.
On the other hand, there often exist more complicated resolutions that are actually smooth.

\begin{example}[Kronheimer] The singularities $\C^2/\Gamma$ with $\Gamma < {\rm SU}(2)$ admit crepant resolutions with exceptional set a Dynkin graph of rational curves. 
Kronheimer \cite{Kronheimer} constructed ALE hyper-K{\"a}hler metrics on these resolutions using the McKay correspondence and symplectic reduction.
\end{example}

If we wish to apply Theorem \ref{thm:main} to a crepant resolution $M \to V$, we can take $\lambda = -\infty$, but then the question arises as to which K{\"a}hler classes $\mathfrak{k}$ are in fact $\mu$-almost compactly supported for some $\mu < 0$. If $\mathfrak{k} \in H^2_c(M)$, then $\mu = -\infty$ works and we obtain AC Calabi-Yau metrics with leading
 term $i\partial\bar{\partial}r^{2-2n}$.
This is van Coevering's main result in \cite{vanC3, vanC2} and contains Joyce's foundational work on the ALE case \cite{Joyce}.
Note that the classes represented by Calabi's metrics are compactly supported, and $H^2(M) = H^2_c(M)$ anyway if $M$ is any resolution of $\C^n/\Gamma$. We refer to \cite{Joyce, Sparks2, vanC2, vanC4} for many new examples beyond these, including crepant resolutions of irregular Calabi-Yau cones.

\subsection{Goto's theorem}\label{s:goto} What Section \ref{s:classical} leaves open is whether an \emph{arbitrary} class $\mathfrak{k} \in H^2(M)$ on a
crepant resolution $M \to V$ is $\mu$-almost compactly supported, assuming that $\mathfrak{k}$ 
contains any positive $(1,1)$-forms at all. We will now explain Goto's answer to this question \cite[Theorem 5.1]{goto}.

We begin with the following exact sequence, which holds on every AC K{\"a}hler manifold:
\begin{equation}\label{e:basic_LES}H^1(L) \to H^2_c(M) \to H^2(M) \to H^2(L) \to H^3_c(M).\end{equation}
Here $L$ is the link of $C$, and $H^1(L) = 0$ because ${\rm Ric}_L > 0$. More importantly, the Bochner formula also tells us that $H^2(L) = H^{1,1}_{{\rm pr}, {\rm b}}(L)$, the primitive basic $(1,1)$-cohomology group associated with the Sasaki structure on $L$; {for $L$} the canonical U$(1)$-bundle over a K{\"a}hler-Einstein Fano manifold $D$, this is nothing else but $H^{1,1}_{{\rm pr}}(D)$. {It then follows from Poincar{\'e} duality that}
\begin{equation}\label{e:resol_sequence}
0 \to H^2_c(M) \to H^2(M) \to H^{1,1}_{{\rm pr},{\rm b}}(L) \to H_{2n-3}(E),
\end{equation}
where again $E = {\rm Exc}(\pi)$. 
Thus, for every closed $2$-form $\omega$ on $M$, there exists a compact set $K \subset M$ such that 
$\omega = p^*\xi + d\eta$ on $M \setminus K$, where $p: C \to L$ denotes the radial projection, $\eta$ is a smooth real-valued $1$-form on $M \setminus K$, and $\xi$ is some closed primitive basic $(1, 1)$-form on $L$.

We now observe that $p^*\xi$ is a $(1,1)$-form on $M$ because $M$ and $C$ are biholomorphic at infinity. Since $p^*\xi = O(r^{-2})$, this allows us to invoke Theorem \ref{thm:main} to construct AC Calabi-Yau metrics of rate $-2+\epsilon$ for every $\epsilon > 0$.
Goto did not have the $\beta \in (-2,0)$ case of Theorem \ref{t:existence},
so he needed another observation, which actually yields a better result:
If $\xi$ is a closed primitive basic
$(1,1)$-form on $L$ ($\xi$ is then automatically harmonic), then $p^*\xi$ defines an infinitesimal Ricci-flat deformation of the cone metric $\omega_0$, i.e.~$(\omega_0 + p^*\xi)^n = e^f \omega_0^n$ with $f = O(r^{-4})$ as opposed to merely $O(r^{-2})$.

\begin{theorem}[Goto]\label{crepant}
Let $(C, g_0, J_0,\Omega_0)$ be a Calabi-Yau cone of complex dimension $n > 2$ and let $L$ be the link, $p: C \to L$ the radial projection, and $V$ the
normal affine variety associated {to} $C$. Let $\pi: M \to V$ be a crepant resolution and $\mathfrak{k} \in H^2(M)$ a class that contains positive $(1,1)$-forms. Then for every $c > 0$, there exists a complete Calabi-Yau metric $g_c$ on $M$ such that $\omega_c \in \mathfrak{k}$ and
\begin{equation}\label{e:goto}
\omega_c - \pi^*(c \omega_0) = p^*\xi + O(r^{-4}),
\end{equation} 
where $\xi$ is the primitive basic harmonic $(1,1)$-form on $L$ that represents the restriction of $\mathfrak{k}$ to $L$. Notice that $p^*\xi = O(r^{-2})$. If $\xi = 0$, or equivalently, if $\mathfrak{k} \in H^2_c(M)$, then we even have 
\begin{equation}\label{e:goto2}
\omega_c - \pi^*(c\omega_0) = const \cdot i\partial\bar{\partial} r^{2-2n} + O(r^{-2n-1-\epsilon})
\end{equation}
for some $\epsilon > 0$. This is the special case covered by the earlier works \cite{Joyce, vanC3, vanC2}.
\end{theorem}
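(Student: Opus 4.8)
The plan is to adapt the proof of Theorem \ref{thm:main} rather than to quote it verbatim, since on a crepant resolution the naive exponent $\nu = \max\{\lambda,\mu\}$ equals $-2$ (one has $\lambda = -\infty$ because $\pi^*\Omega_0 = \Omega$, and $\mu = -2$ because $p^*\xi = O(r^{-2})$), and $-2$ is precisely an inadmissible weight there. The way around this is to observe that the Ricci potential of the model metric decays faster than $\nu$ would suggest, thanks to primitivity of $\xi$.

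First I would record the geometry at infinity. Since $\mathfrak{k}$ contains positive $(1,1)$-forms, fix a K\"ahler form $\omega \in \mathfrak{k}$. The exact sequence (\ref{e:resol_sequence}), together with $H^1(L) = 0$ and Hodge theory on the link, yields a compact $K \subset M$, a smooth real $1$-form $\eta$ on $M \setminus K$, and the primitive basic harmonic $(1,1)$-form $\xi$ representing $\mathfrak{k}|_L$, such that $\omega = p^*\xi + d\eta$ on $M \setminus K$. As $\pi$ is a biholomorphism at infinity, $p^*\xi$ is a genuine $(1,1)$-form there of rate $-2$, so that $\mathfrak{k}$ is $(-2)$-almost compactly supported in the sense of Definition \ref{d:acsupp}, with $p^*\xi$ playing the role of $\xi$.

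Next I would run the cutoff construction from the proof of Theorem \ref{thm:main} unchanged, feeding it this form $p^*\xi$ and the rate $\lambda = -\infty$. This produces a genuine K\"ahler form $\hat\omega$ on $M$, lying in $\mathfrak{k}$, with $\hat\omega = c\omega_0 + p^*\xi$ outside a compact set. The decisive point is the rate of the Ricci potential $\hat f = \log\frac{i^{n^2}\Omega \wedge \bar{\Omega}}{(\hat\omega/c)^n}$. At infinity $\hat\omega/c = \omega_0 + c^{-1}p^*\xi$ and $i^{n^2}\Omega\wedge\bar{\Omega} = \omega_0^n$; since $p^*\xi$ is primitive the linear term $p^*\xi \wedge \omega_0^{n-1}$ vanishes, so by the infinitesimal Ricci-flat deformation property noted just before the statement one gets $(\omega_0 + c^{-1}p^*\xi)^n = e^{f'}\omega_0^n$ with $f' = O(r^{-4})$. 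Hence $\hat f \in C^\infty_{-4}(M)$: its rate is $-4$, not the generic $-2$.

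Because $n > 2$ forces $-4 \in (-2n,-2)$, Part (ii) of Theorem \ref{t:existence} now applies with $\beta = -4$ and furnishes a unique $\hat u \in C^\infty_{-2}(M)$ solving $(\hat\omega + i\partial\bar{\partial}\hat u)^n = e^{\hat f}\hat\omega^n$. Then $\omega_c := \hat\omega + i\partial\bar{\partial}\hat u$ is Ricci-flat and lies in $\mathfrak{k}$, and since $i\partial\bar{\partial}\hat u$ has rate $-4$ (as $i\partial\bar{\partial}$ lowers the rate by two) while $\hat\omega - (c\omega_0 + p^*\xi)$ vanishes at infinity, one reads off $\omega_c - c\pi^*\omega_0 = p^*\xi + O(r^{-4})$, which is (\ref{e:goto}). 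For the final assertion, exactness of (\ref{e:resol_sequence}) shows $\xi = 0$ iff $\mathfrak{k} \in H^2_c(M)$; then $\mu = -\infty$, so $\nu = -\infty < -2n$ is admissible, Theorem \ref{thm:main} applies directly, and its last clause (built on Part (i) of Theorem \ref{t:existence}) gives the sharper expansion (\ref{e:goto2}). The \emph{main obstacle} is conceptual rather than computational: recognizing that $\nu = -2$ is forbidden and that primitivity of the harmonic representative $\xi$ is exactly the mechanism that upgrades the Ricci potential to rate $-4$, thereby pushing the Monge-Amp\`ere solve into the good range of Part (ii).
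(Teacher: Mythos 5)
Your proposal is correct and follows essentially the same route as the paper: the exact sequence \eqref{e:resol_sequence} produces the decomposition $\omega = p^*\xi + d\eta$ at infinity with $\xi$ primitive basic (hence harmonic), and the decisive point -- that primitivity kills the linear term so the Ricci potential of the model $c\omega_0 + p^*\xi$ lies in $C^\infty_{-4}(M)$, placing the Monge-Amp\`ere solve in the range of Part (ii) of Theorem \ref{t:existence} -- is exactly the ``Goto observation'' the paper invokes, with the $\xi = 0$ case handled by the last clause of Theorem \ref{thm:main} just as you describe. No substantive differences.
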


\begin{example}[Candelas \& de la Ossa]\label{ex:cdo} One of the very few explicitly known metrics that behave as in (\ref{e:goto}) with $\xi \neq 0$ lives on the so-called \emph{small resolution of the conifold} \cite{delaossa}, that is, on the total space of the rank-$2$ vector bundle $\mathcal{O}_{\P^1}(-1)^{\oplus 2}$.
Contracting the zero section maps this space to the cone $C_1$ over $\P^1 \times \P^1$ from Example \ref{ex:calabi1}. In turn, we have two inequivalent birational maps 
from the total space of the square root of $K_{\P^1 \times \P^1}$ on{to} the small resolution, related by a \textquotedblleft flop\textquotedblright.

Theorem \ref{crepant} abstractly proves the existence of a $1$-parameter family
of AC Calabi-Yau metrics of rate $-2$ on this manifold that are all asymptotic to one fixed Calabi-Yau cone metric $\omega_0$ on $C_1$ via $\pi$. This is because $H^2_c(M) = H_{2n-2}(E) = 0$
and $h^{1,1}_{\rm pr}(\P^1 \times \P^1) = 1$,\footnote{This argument also shows that complex cones over rank-$1$ Fano manifolds have no K{\"a}hler small resolutions.} and because we can combine the scaling action of $\R^+$ on our metrics and the diffeomorphism parameter $c$ into one single parameter that scales the exceptional $\P^1$, while leaving the metric unchanged at infinity. 

{By Theorem \ref{t:uniqueness}, this family must coincide with the explicit one from \cite{delaossa}}.
\end{example} 

\begin{example}[Goto]\label{ex:calabi2}
Calabi's metric $\omega$ on the total space of $K_D$ has rate $-2n$ and is $i\partial\bar{\partial}$-exact at infinity. 
Now $H^2(M) = H^2_c(M) \oplus H^{1,1}_{\rm pr}(D)$ by (\ref{e:resol_sequence}), and $H^2_c(M) = H_{2n-2}(E) = \R[\omega]$. 
Including one ``scale'' as above, $\omega$ thus moves in a $b_2(D)$-parameter family of AC Calabi-Yau metrics of rate $-2$.
\end{example}

\begin{remark}

(i) By \cite[Theorem 7.92]{Cheeger}, the $p^*\xi$ with $\xi \in \mathcal{H}^{1,1}_{{\rm pr},{\rm b}}(L)$ exhaust the set of leading terms of infinitesimal Ricci-flat deformations of $(C,g_0)$ with the complex structure held fixed.

(ii) If there exist any compactly supported K{\"a}hler classes at all, then $E$ must contain a divisor.
From \cite[\S 3.4]{vanC3}, the constant in front of $i\partial\bar{\partial}r^{2-2n}$ in (\ref{e:goto2}) is positive and proportional to $\langle \mathfrak{k}^n, [M]\rangle$. 
\end{remark}

\subsection{Small resolutions associated with flag manifolds}\label{s:flag-ex's}

The theme of this section is to obtain examples of small (and therefore crepant) resolutions of Calabi-Yau cones as total spaces of vector bundles. The following lemma gives us a general tool for making constructions of this kind.

\begin{lemma}\label{l:smallres}
Let $B$ be a compact complex manifold. Let $E \to B$ be a vector bundle of rank $r \geq 2$ and
let $p: \P(E) \to B$ denote its projectivization. Then, for any $k \in \N$, the following are equivalent:
\begin{enumerate}
\item $c_1(\P(E))$ is divisible by $k$.
\item Both $r$ and $c_1(E) + c_1(B)$ are divisible by $k$. 
\end{enumerate}
In this case, assume that $\P(E)$ is Fano and let $C$ denote the $k$-th root of $K_{\P(E)}$ with its zero section blown down. Then $C$ admits a small resolution by the total space of $E \otimes L$ for some $L \in {\rm Pic}(B)$ if and only if $k = r$. In this case, $L$ is unique, and is in fact given by $L^{-r} = \det E \otimes \det T_B$. 
\end{lemma}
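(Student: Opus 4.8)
The plan is to separate the purely topological equivalence (i)$\Leftrightarrow$(ii) from the geometric ``in this case'' statement, both of which rest on the Picard group of a projective bundle. Throughout I adopt the \emph{subspace} convention, in which $\mathcal{O}_{\P(E)}(-1) \subset p^*E$ is the tautological line subbundle and $\xi := c_1(\mathcal{O}_{\P(E)}(1))$; this is the convention under which the canonical class picks up the sign giving $c_1(E)+c_1(B)$ rather than $c_1(E)-c_1(B)$.

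First I would record the two standard facts about $\P(E)\xrightarrow{p} B$: the direct-sum decomposition ${\rm Pic}(\P(E)) = p^*{\rm Pic}(B)\oplus\Z\xi$ (the projective bundle theorem), under which $p^*$ is injective; and the relative canonical formula, which in the subspace convention reads $K_{\P(E)} = \mathcal{O}_{\P(E)}(-r)\otimes p^*(\det E\otimes\det T_B)^{-1}$, so that $c_1(\P(E)) = r\xi + p^*(c_1(E)+c_1(B))$. Given these, (i)$\Leftrightarrow$(ii) is immediate: an element $a\xi + p^*\beta$ is divisible by $k$ in ${\rm Pic}(\P(E))$ iff $k\mid a$ and $\beta$ is divisible by $k$ in ${\rm Pic}(B)$, whence $k\mid c_1(\P(E))$ iff $k\mid r$ and $k\mid(c_1(E)+c_1(B))$.

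For the second part I would identify $C$ as an affine cone in two ways and match the polarizations. By construction $C$ is the blowdown of the zero section of the $k$-th root $R$ of $K_{\P(E)}$, i.e.\ the affine cone over $\P(E)$ polarized by $R^{-1}$, which is ample because $(R^{-1})^k = -K_{\P(E)}$ is ample ($\P(E)$ being Fano). On the other hand, writing $F:=E\otimes L$ and using $\P(F)=\P(E)$, the contraction of the zero section $B$ (of codimension $r\geq 2$) of the total space ${\rm tot}(F)$ realizes the affine cone over $\P(F)$ polarized by $\mathcal{O}_{\P(F)}(1)$; this contraction is a \emph{small}, hence crepant, resolution whenever it exists. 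The key computation is the twisting identity $\mathcal{O}_{\P(F)}(1) = \mathcal{O}_{\P(E)}(1)\otimes p^*L^{-1}$, which holds because the line $\ell\subset E_b$ corresponds to $\ell\otimes L_b\subset F_b$. Thus ${\rm tot}(E\otimes L)$ resolves the given $C$ precisely when the two polarizations agree, i.e.\ when $R^{-1} = \mathcal{O}_{\P(E)}(1)\otimes p^*L^{-1}$.

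Finally I would analyze solvability of this matching equation. Writing $R^{-1} = \mathcal{O}_{\P(E)}(a)\otimes p^*N$ and raising to the $k$-th power gives $\mathcal{O}_{\P(E)}(ka)\otimes p^*N^k = R^{-k} = \mathcal{O}_{\P(E)}(r)\otimes p^*(\det E\otimes\det T_B)$, so the $\xi$-coefficient forces $ka=r$; the matching equation forces $a=1$, hence $k=r$, and conversely $k=r$ makes $N:=R^{-1}\otimes\mathcal{O}_{\P(E)}(-1)$ lie in $p^*{\rm Pic}(B)$, so $L:=N^{-1}$ exists and resolves $C$. Comparing $p^*{\rm Pic}(B)$-components then yields $N^r=\det E\otimes\det T_B$, i.e.\ $L^{-r}=\det E\otimes\det T_B$; uniqueness of $L$ is automatic from injectivity of $p^*$. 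I expect the main obstacle to be the careful identification of the two cone structures in the third paragraph --- verifying that the zero-section contraction exists and produces \emph{exactly} the $C$ of the statement rather than a merely analytically equivalent germ, which is where matching $R^{-1}=\mathcal{O}_{\P(F)}(1)$ as honest line bundles (not just up to scale) becomes essential.
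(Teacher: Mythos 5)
Your proposal is correct and follows essentially the same route as the paper: both arguments rest on the decomposition ${\rm Pic}(\P(E))=\Z\xi\oplus p^*{\rm Pic}(B)$, the identity $c_1(\P(E))=r\xi+p^*(c_1(E)+c_1(B))$, the canonical identification $\P(E\otimes L)=\P(E)$ with the twist $\mathcal{O}_{E\otimes L}(1)=\mathcal{O}_E(1)\otimes p^*L^{-1}$, and the reduction of the resolution condition to the line-bundle equation $K_{\P(E)}=\mathcal{O}_{E\otimes L}(-k)$. The only cosmetic difference is that the paper derives the $c_1$ formula from Chern characters of the relative Euler sequence and phrases the final computation as $(r-k)\xi_{E\otimes L}+p^*(c_1(E)+c_1(B)+rc_1(L))=0$, whereas you quote the relative canonical formula and match polarizations of the two affine cones directly.
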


\begin{proof}
The equivalence of (i) and (ii) is immediate from the identity 
\begin{equation}\label{e:c1projbdle}c_1(\P(E)) = r \xi_E + p^*(c_1(E) + c_1(B)),\end{equation}
where $\xi_E = c_1(\mathcal{O}_E(1))$. One can prove this by taking Chern characters in the exact sequences
\begin{align*}
0 \to T_{\P(E)/B} \to T_{\P(E)} \to p^*T_B \to 0, \\
0 \to \mathcal{O}_{\P(E)} \to \mathcal{O}_E(1) \otimes p^*E \to T_{\P(E)/B} \to 0,
\end{align*}
the second of which is simply a family version of the usual Euler sequence for $\P^{r-1}$.

By construction, for any vector bundle $F \to B$, the total space of $F$ is gotten by blowing down the fibers of $\P(F) \to B$ in the zero section of $\mathcal{O}_{F}(-1) \to \P(F)$. Recalling that 
$\P(E \otimes L) = \P(E)$ in a canonical fashion for all line bundles $L \to B$, we see that the total space of $E \otimes L$ resolves $C$ if and only if $K_{\P(E)} = \mathcal{O}_{E \otimes L}(-k)$. Since $\P(E)$ is Fano, this is equivalent to $c_1(\P(E)) = k \xi_{E \otimes L}$. 
Recall that $c_1(E \otimes L) = c_1(E) + r c_1(L)$. Together with (\ref{e:c1projbdle}), this {allows} us {to} rewrite our condition as $$(r -k)\xi_{E \otimes L} + p^*(c_1(E) + c_1(B) + r c_1(L)) = 0.$$
This is solvable if and only if $k = r$. Moreover, in this case, $L$ as in the claim is the unique solution because $p^*$ injects ${\rm Pic}(B)$ into ${\rm Pic}(\P(E))$, so that ${\rm Pic}(B)$ must be discrete as well. 
\end{proof} 

If $\P(E)$ is in addition K{\"a}hler-Einstein, then Theorem \ref{crepant} immediately gives us a $\rho_B$-dimensional family of AC Calabi-Yau metrics of rate $-2$ on the small resolution $E \otimes L$. Here, $\rho_B$ denotes the Picard rank of $B$, and we have used the fact that ${\rm Pic}(\P(E)) = \Z[\mathcal{O}_E(1)] \oplus p^*{\rm Pic}(B)$. 

On the other hand, taking branched covers {of the metrics} in Example \ref{ex:calabi2} with $r$-fold branching along the zero section of $K_{\P(E)}$, we also get a $(\rho_B + 1)$-parameter family of singular AC Calabi-Yau metrics on the total space of $\frac{1}{r}K_{\P(E)}$ with a cone angle of $2\pi r$ along the zero section.

\begin{conj}\label{conj:contr}
The singular {\rm AC} metrics on the $r$-th root of $K_{\P(E)}$ contract along one-parameter families to the smooth {\rm AC} metrics on $E \otimes L$ by shrinking the fibers of $p$ in the zero section.
\end{conj}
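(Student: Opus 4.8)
The plan is to read the contraction off the complex geometry, to model it as an anisotropic collapse of the exceptional divisor inside a fixed asymptotic and cohomological framework, and then to identify the limit by invoking the uniqueness result of Theorem \ref{t:uniqueness}.

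First I would make the birational picture explicit. With $F = E \otimes L$ and $K_{\P(E)} = \mathcal{O}_F(-r)$ as in Lemma \ref{l:smallres}, the $r$-th root $\frac{1}{r}K_{\P(E)} = \mathcal{O}_F(-1)$ has total space equal to the blow-up of the total space of $E\otimes L$ along its zero section $B$, with exceptional divisor the $\P^{r-1}$-bundle $p:\P(E) \to B$. The singular metrics of cone angle $2\pi r$ produced from Example \ref{ex:calabi2} live on this blow-up and are singular along $\P(E)$, whereas the smooth metrics of Theorem \ref{crepant} live on the blow-down $E\otimes L$, where $B$ has codimension $r \geq 2$ and no divisor survives. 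The contraction in the statement is exactly the blow-down, realized on K\"ahler classes by letting the coefficient $t$ in $\pi^*\mathfrak{k} - t[\P(E)]$ tend to $0$: since $[\P(E)]$ is supported near $B$, this collapses the $p$-fibers while leaving both $B$ and the asymptotic cone scale untouched. This is the relevant \emph{anisotropic} degeneration, to be distinguished from an overall shrinking of $\P(E)$, which would instead converge to the singular cone $C$.

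The next step is to run this as a convergence problem and close it with uniqueness. Fixing the $\rho_B$ ``shape'' parameters (equivalently a smooth class $\mathfrak{k}$ on $E\otimes L$), the family $\omega_t$ of Ricci-flat representatives of $\pi^*\mathfrak{k} - t[\P(E)]$ is AC with $t$-independent asymptotics $c\omega_0$, and each $\omega_t$ shares the holomorphic volume form $\Omega$. If one can show that $\omega_t$ subconverges, as $t \to 0$, in $C^\infty_{\mathrm{loc}}$ on $(E\otimes L)\setminus B$ to an AC Ricci-flat K\"ahler form in the class $\mathfrak{k}$, then the difference between this limit and the Theorem \ref{crepant} metric in $\mathfrak{k}$ is $d$-exact, decays, and preserves the volume form, so Theorem \ref{t:uniqueness} forces the two to coincide. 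Globally, Cheeger--Colding theory provides Gromov--Hausdorff subconvergence of the Ricci-flat spaces, and the content is to prove that the limit is the smooth AC space $E\otimes L$ rather than $C$ or a further degeneration.

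The hard part will be the uniform a priori estimates along $t \to 0$. This is a collapsing Calabi--Yau problem in which the $\P^{r-1}$ fibers of the exceptional divisor shrink to zero volume while a conical singularity of fixed angle $2\pi r$ is supported on the very divisor that is collapsing. One needs a $t$-uniform $C^0$ bound on the relevant potentials together with interior higher-order estimates that are uniform on compact subsets of $(E\otimes L)\setminus B$; the $t$-independent Sobolev inequality (\ref{sobolev}) and a weighted Moser iteration away from the collapsing region are the natural starting points, but carrying them across the conical locus and showing that the collapse is genuinely anisotropic (so that the limit is $E\otimes L$ and not the cone) is where the real difficulty lies. For the cohomogeneity-one examples coming from flag manifolds, however, the entire construction reduces to a single radial ODE on both models, and I expect the contraction can be verified directly by matching the two one-parameter families of ODE solutions in the limit $t\to 0$; this explicit case should both confirm the conjecture in the symmetric setting and serve as a guide for the general uniform estimate.
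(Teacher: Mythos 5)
You should first note that the paper does not prove this statement at all: it is posed as a conjecture, and the only supporting evidence the authors give is external and example-specific (the explicit constructions of \cite{bm, r} for $T^*\P^2$ and of \cite{mal} for $T^*\P^n$, together with the authors' own suspicion that ``an ODE-based approach must exist''). So there is no argument in the paper to compare yours against; the only question is whether your sketch settles the conjecture, and — as you candidly signal — it does not.

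Your geometric framing is correct and is surely the intended one: the total space of $\frac{1}{r}K_{\P(E)} = \mathcal{O}_{E\otimes L}(-1)$ is the blowup of the total space of $E\otimes L$ along the zero section $B$, the conical locus of the singular metrics is the exceptional divisor $\P(E)$, and degenerating the classes $\pi^*\mathfrak{k} - t[\P(E)]$ as $t\to 0$ collapses precisely the $\P^{r-1}$-fibers of $p$, since $-[\P(E)]|_{\P(E)} = \xi_{E\otimes L}$ is relatively ample over $B$. The reduction to ``uniform estimates plus Theorem \ref{t:uniqueness}'' is also the natural endgame. But every analytically substantive step is left open. (a) The family $\omega_t$ is not even defined by the machinery of Section \ref{s:existence}: the singular metrics arise from a branched-cover trick applied to Example \ref{ex:calabi2}, so one must first set up a conical Monge--Amp\`ere problem in which the $t$-family exists, with cone angle $2\pi r > 2\pi$ along the very divisor being collapsed. (b) A $t$-uniform $C^0$ bound on potentials and interior estimates that survive the passage across the degenerating conical locus are asserted as ``the hard part'' but not supplied. (c) Any $C^\infty_{\mathrm{loc}}$ limit on $(E\otimes L)\setminus B$ must be extended smoothly across $B$ (real codimension $2r\geq 4$); this removable-singularity step needs an argument, e.g.\ bounded local potentials plus regularity. (d) Theorem \ref{t:uniqueness} requires the limit to be AC asymptotic to $c\omega_0$ with the \emph{same} constant $c$ and with a $d$-exact difference in $C^\infty_{-\epsilon}$; $C^\infty_{\mathrm{loc}}$ convergence alone does not control the asymptotics uniformly in $t$, so a uniform weighted estimate at infinity is also needed. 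Your closing remark, that the cohomogeneity-one cases reduce to matching two families of ODE solutions, is exactly how the known cases were verified in \cite{bm, r, mal}, but that is a check in examples rather than a proof. In short: a sensible and correctly set up plan, consistent with how the authors frame the problem, but the conjecture remains open after your write-up.
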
 

We now discuss a new class of examples where a compact Lie group $G$ acts with cohomogeneity
one on the cone by holomorphic isometries, so that Theorem \ref{t:uniqueness} immediately tells us that the AC Calabi-Yau metrics on $E \otimes L$ have cohomogeneity one under $G$ as well. It would then be interesting to see whether the ODE that of necessity governs these metrics is explicitly solvable.

\begin{obs}
We can construct ${\rm AC}$ Calabi-Yau metrics of rate $-2$ and cohomogeneity one on a small resolution proceeding from any compact, simply connected, semisimple Lie group $G$, together with a pair of parabolic subgroups $P_1 \subsetneq P_2$ such that $P_2/P_1 = \P^{k-1}$, {where} $k$ divides $c_1(G/P_1)$.
\end{obs}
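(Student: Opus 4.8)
The plan is to recognize the data $(G,P_1\subsetneq P_2)$ as a special instance of Lemma~\ref{l:smallres} and then to run it through Theorem~\ref{crepant} and Theorem~\ref{t:uniqueness}. First I would put $B:=G/P_2$ and use the homogeneous fibration $G/P_1\to G/P_2$ with fibre $P_2/P_1=\P^{k-1}$ to present $G/P_1=\P(E)$, where $E\to B$ is the $G$-homogeneous rank-$k$ bundle associated to the $k$-dimensional $P_2$-representation whose projectivization gives the fibre (here $k\ge 2$ because $P_1\subsetneq P_2$). The manifold $G/P_1$ is a generalized flag manifold, hence Fano, and being compact homogeneous K\"ahler it carries a $G$-invariant K\"ahler--Einstein metric. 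The hypothesis that $k$ divides $c_1(G/P_1)=c_1(\P(E))$ is exactly condition~(i) of Lemma~\ref{l:smallres}, and since $\operatorname{rank}E=k$ we are automatically in its equality case $k=r$. The lemma then produces a small resolution $\pi:M\to C$, where $M$ is the total space of $E\otimes L$ with $L^{-k}=\det E\otimes\det T_B$, and $C$ is the $k$-th root of $K_{\P(E)}=K_{G/P_1}$ with its zero section blown down. By the Calabi ansatz of Section~\ref{s:calans}, applied to the K\"ahler--Einstein Fano $D=G/P_1$ with $k\mid c_1(D)$, this $C$ is a genuine Calabi--Yau cone with $G$-invariant Sasaki--Einstein link.

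Next I would record the group action and produce the metrics. The group $G$ acts on $C$ by holomorphic isometries: it preserves the K\"ahler--Einstein metric on $D$, hence the induced Hermitian metric on $\tfrac1k K_D$, and so it fixes the radius function $r$. Since $\P(E)=G/P_1$ is K\"ahler--Einstein, the discussion following Lemma~\ref{l:smallres} permits me to invoke Theorem~\ref{crepant} (applicable once $\dim_{\mathbb C}C>2$), which for every $c>0$ yields an AC Calabi--Yau metric $\omega_c$ of rate $-2$ on $M=E\otimes L$, asymptotic to $c\omega_0$ on $C$. The step I expect to require the most care is the claim that this $G$-action has \emph{cohomogeneity one}, its principal orbits being the links $\{r=\text{const}\}$: $G$ is transitive on $D=G/P_1$, and the isotropy subgroup $G\cap P_1$ acts on the fibre line of $\tfrac1k K_D$ through a character whose restriction to a maximal torus is the (nonzero, because $D$ is Fano) defining weight, hence surjects onto $U(1)$; thus $G$ sweeps out each circle fibre and acts transitively on every link.

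Finally I would upgrade the evident $G$-equivariance of the construction to $G$-invariance of $\omega_c$ by appealing to uniqueness. Because $E\otimes L$ is $G$-homogeneous, $G$ acts holomorphically on $M$ covering its action on $B$ and compatibly with $\pi$; as $G$ is semisimple, every character of $G^{\mathbb C}$ is trivial, so $G$ preserves $\Omega$ exactly, and being connected it acts trivially on $H^2(M)$. Hence for each $g\in G$ the form $g^*\omega_c$ is again a Ricci-flat K\"ahler form lying in the class of $\omega_c$, with the same volume form $i^{n^2}\Omega\wedge\bar\Omega$ and the same rate-$(-2)$ asymptotics towards $c\omega_0$ (which $G$ fixes). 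Thus $\eta:=g^*\omega_c-\omega_c$ meets every hypothesis of Theorem~\ref{t:uniqueness}, forcing $g^*\omega_c=\omega_c$. The metric $g_c$ is therefore $G$-invariant, and since $G$ acts with cohomogeneity one on the asymptotic cone its orbits on $M$ are generically of codimension one; hence $g_c$ has cohomogeneity one under $G$, as asserted.
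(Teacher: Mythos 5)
Your proposal is correct and follows essentially the same route as the paper: present the fibration $G/P_1 \to G/P_2$ as a projective bundle $\P(E)$, feed this into Lemma \ref{l:smallres} and the Calabi ansatz to obtain the small resolution $E\otimes L$ of a Calabi--Yau cone over the K\"ahler--Einstein Fano $G/P_1$, apply Theorem \ref{crepant} for existence at rate $-2$, and play the cohomogeneity-one isometric $G$-action on the cone off against Theorem \ref{t:uniqueness} to force $G$-invariance of each $\omega_c$. The one step where the paper is more careful is the existence of the vector bundle $E$ itself: rather than positing a linear $k$-dimensional $P_2$-representation whose projectivization is the fibre (which tacitly requires lifting a projective representation of $P_2$ to a linear one), the paper deduces that $G/P_1 = \P(E)$ for some $E \to G/P_2$ from Borel's vanishing $H^{\mathrm{odd}}(G/P,\Z) = 0$ combined with the Griffiths--Harris criterion for a projective bundle to be a projectivization.
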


The {main point here is} that all flag manifolds $G/P$ are Fano with a $G$-invariant K{\"a}hler-Einstein metric \cite[\S 8]{besse}, and
$H^{{\rm odd}}(G/P,\Z) = 0$ \cite{borel}, so that the bundle $G/P_1 \to G/P_2$ can be written as $\P(E)$ for some vector bundle $E \to G/P_2$ by \cite[p.~515]{GriffithsHarris}. The condition on $c_1$ is combinatorially checkable, although in concrete examples $E$ may already be given to us as {a ``tautological''} vector bundle, in which case condition (ii) {of} Lemma \ref{l:smallres} {may} be more practical to check.

\begin{example}
The small resolution of the conifold from Example \ref{ex:cdo} is recovered by taking $G$ to be ${\rm SU}(2) \times {\rm SU}(2)$. {This group, being of rank $2$}, has two flag manifolds: $\P^1 \times \P^1$ and $\P^1$.
\end{example}

\begin{example}\label{ex:su3}
${\rm SU}(3)$ also has two flag manifolds:~the maximal one, SU$(3)/T^2 = \P(T^*\P^2)$ (of Fano index $2$, as for all groups $G$), and $\P^2$. 
The resulting $1$-parameter family of AC Calabi-Yau metrics on $T^*\P^2$ is originally due to Calabi \cite{Cal1}; these metrics are in fact hyper-K{\"a}hler. Conjecture \ref{conj:contr} was 
motivated 
by the fact that such contractions were recently constructed in this case \cite{bm, r}.
\end{example}

The example of SU$(3)$ can be generalized to higher dimensions in two different ways. We refer to \cite[8.111]{besse} for computations {involving} SU$(n+1)$ flags. The basic tool here is to label the space of all flags of the form $0 \subset V^{n_1} \subset V^{n_1 + n_2} \subset ... \subset \C^{n+1}$ by the ordered partition $(n_1,n_2,...)$ of $n + 1$.

\begin{example} 
The maximal flag variety $(1,...,1)$ of SU$(n+1)$ fibers over every flag variety of the form $(1,...,1,2,1,...,1)$ with $\P^1$ fibers. {Our construction applies here} because $c_1(G/T)$ is the sum of the positive roots for every group $G$, which is famously divisible by $2$ in the weight lattice.
\end{example}

\begin{example} The Grassmannian $G(k,n+1)$ of $k$-planes in $\C^{n+1}$ is given by $(k,n+1-k)$ and has at most four projective bundles sitting over it that are {also} flag manifolds: $(1,k-1,n+1-k)$, $(k-1,1,n+1-k)$, $(k, 1, n-k)$, $(k,n-k,1)$, with fibers $\P^{k-1}$, $\P^{k-1}$, $\P^{n-k}$, $\P^{n-k}$ {respectively}.

It is an instructive exercise to compute the divisibility of the first Chern classes of the projective bundles 
using roots. The positive roots are $\lambda_i - \lambda_j$ for $i < j$ with $\sum_{i=1}^{n+1} \lambda_i = 0$. To get $c_1(G/P)$, we need to sum those roots that do not vanish on the face of the Weyl chamber associated {to} $G/P$. {If} we think of the ordered partition in terms of \textquotedblleft grouping boxes\textquotedblright,
$$n+1 = (\underbrace{\Box \Box \Box \Box\Box\Box}_{n_1})(\underbrace{\Box\Box\Box\Box}_{n_2})(\underbrace{\Box\Box\Box\Box\Box}_{n_3})(...),$$
{this just} means that we need to sum all $\lambda_i - \lambda_j$ such that the $i$-th and $j$-th boxes belong to different groups. For example, we can now compute that $c_1(1,k-1,n+1-k) = (n + k)\lambda_1 + n(\lambda_2 + ... + \lambda_k)$, which is divisible by $k$ in the weight lattice if and only if $k|n$.

However, we can give a simpler and more complete discussion via Lemma \ref{l:smallres}(ii). The projective
bundles in question are clearly of the form $\P(E)$ for $E = T$, $T^*$, $Q$, $Q^*$, where $T$ denotes the rank $k$ tautological bundle and $Q = \underline{\C}^{n+1}/T$. Now, ${\rm Pic}(G(k,n+1)) = \Z[\det T]$ and $K_{G(k,n+1)} = (\det T)^{n+1}$, so Lemma \ref{l:smallres} immediately tells us that $T$ works iff $k|n$ (as we already know), 
$T^*$ works iff $k|n+2$, $Q$ works iff $n+1-k|n+2$, and $Q^*$ works iff $n+1-k|n$. Moreover, we learn that the line bundle $L$ that we need to twist $E$ by is $\det T$ to the power $\frac{n}{k}, \frac{n+2}{k}, \frac{n+2}{n+1-k}, \frac{n}{n+1-k}$, {respectively}. Finally, 
it is then also clear that we only get a $1$-parameter (scaling) family of Ricci-flat metrics.

As a special case, we can consider $G(1,n+1) = \P^n$. This gives us Ricci-flat metrics on the total space of $Q^* \otimes \mathcal{O}_{\P^n}(-1)$ as well as on the total space of $Q \otimes \mathcal{O}_{\P^2}(-2)$. Now, the fiber of $Q^* \otimes \mathcal{O}_{\P^n}(-1)$ over $\ell \in \P^n$ is naturally identified with ${\rm Hom}(\C^{n+1}/\ell, \C) \otimes \ell = {\rm Hom}(
\ell, \C^{n+1}/\ell)^*$, the cotangent space to $\P^n$ at $\ell$. If $n = 2$, this is in turn the same as the fiber of $Q \otimes \mathcal{O}_{\P^2}(-2)$, but not quite canonically {so} because one needs to choose an
element of $\wedge^{3,0}\C^3$. In any {case},
{the result is} that we recover Calabi's 
hyper-K{\"a}hler metrics on $T^*\P^n$ for every $n \geq 2$. Conjecture \ref{conj:contr} is in fact known {to hold true} in this case as well \cite{mal},
generalizing the work for $n = 2$ mentioned in Example \ref{ex:su3} {above}.
\end{example}

\begin{remark}
Flag manifolds and their characteristic classes are a classical topic in topology. For example, Borel-Hirzebruch \cite[p.~340]{BH} noticed that the Chern numbers $c_1^5$ of the SU$(4)$ flag manifolds $\P(T^*\P^3)$ and $\P(T\P^3)$ are different. {Thus, the complex structures of $\P(T^*\P^3)$ and $\P(T\P^3)$ cannot be homotopic, even though their underlying smooth manifolds are trivially diffeomorphic.}
\end{remark}
\newpage
\section{{Affine smoothings}}\label{smoothing}

\subsection{Overview}\label{s:smoothingov} We now study certain examples of AC Calabi-Yau manifolds which {are}, in some sense, the extreme opposites of crepant resolutions. The idea {here} is to realize a Calabi-Yau cone as an affine variety and to {``smooth out'' its} singularity by adding on
{terms of lower degree} to some of its defining equations.
The cone $C$ and its {associated ``smoothing''} $M$ are then diffeomorphic away from compact subsets, but {they are} no longer
canonically diffeomorphic or even biholomorphic.

We will only be considering \emph{regular} Calabi-Yau cones. Thus, the underlying variety $C = (\frac{1}{k}K_D)^\times$ for some $(n-1)$-dimensional Fano manifold $D$ and {some} $k|c_1(D)$, and the Ricci-flat cone metric on $C$ is lifted from a K{\"a}hler-Einstein metric on $D$ via the Calabi ansatz; compare Section \ref{s:calans}. By the arguments {in \cite{vanC4} used to prove Theorem \ref{t:affine}, we can assume that $C$ is realized as an algebraic variety in $\C^N$ for some $N$, in such a way that the canonical $\C^*$-action on $C$ (whose space of orbits is $D$) is the restriction of a diagonal $\C^*$-action on $\C^N$ with positive weights $w_1, ..., w_N > 0$.

It will be enough here to work with an intuitive notion of smoothing. The {most} basic example we have in mind is the $1$-parameter deformation $z_1^2 + ... + z_{N}^2 = t$ of the ordinary double point in $\C^{N}$, {with} $N = n+1$. Here, as in all {of} the other examples we {shall} be considering, $w_1 = ... = w_{N} = 1$ because $D$ is already projectively embedded by $\frac{1}{k}K_D$. In general, {in a smoothing,} the terms {that are} added on to the equations of $C$ {need} to be of sufficiently low degree relative to $w_1, ..., w_N$.

If a cone is rigid as a variety, then no such smoothings exist. For example, $K_D^\times$ is rigid if  $D = \P^{2}$ or ${\rm Bl}_1\P^2$ or any toric Fano manifold of dimension $n - 1 > 2$; compare \cite[(6.3), (9.1)]{Altmann}.
\begin{prop}\label{p:smoothing}
{\rm (i)} An $n$-dimensional smooth affine variety $M$ with trivial canonical bundle is a smoothing of
$C = (\frac{1}{k}K_D)^\times$ as above if and only if $M = X \setminus D$ for {some} $n$-dimensional Fano manifold $X$ of index at least $2$ containing $D$ as an anticanonical divisor such that $-K_X = (k + 1)[D]$. 

{\rm (ii)} $H^2_c(M) = H^2(M)$ if $n = 2$, and $H^2_c(M) = 0$ if $n > 2$. In both cases, $b^2(M) = b^2(X) - 1$, and the image of the K{\"a}hler cone of $X$ under restriction to $M$ is the whole space $H^2(M)$.
\end{prop}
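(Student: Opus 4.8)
The plan is to prove the proposition by \emph{compactifying} $M$: one adds a single divisor $D$ at infinity and shows that the resulting space $X$ is a smooth Fano manifold with $-K_X=(k+1)[D]$, and conversely. Part (ii) is then a cohomological computation on the pair $(X,D)$. For the forward direction of Part (i), I would argue as follows. Since $M$ is an affine smoothing, it agrees with the cone $C=(\tfrac1k K_D)^\times$ outside a compact set, and $C$ has a standard fibrewise compactification $\bar C=\P(L\oplus\mathcal{O}_D)$, $L:=\tfrac1k K_D$, a $\P^1$-bundle over $D$ whose section at infinity $D_\infty$ is a copy of $D$. As the smoothing only alters a neighbourhood of the apex, gluing $\bar C$ to $M$ along their common end produces a smooth compactification $X\supset D:=D_\infty$ with $M=X\setminus D$. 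The key quantitative step is to compute the order of the pole of the holomorphic volume form $\Omega$ along $D$. This is a cone computation: from $\omega_0^n=i^{n^2}\Omega_0\wedge\bar{\Omega}_0$ and $\nu_t^*\omega_0=t^2\omega_0$ one gets $\nu_t^*\Omega_0=t^n\Omega_0$, and combining this with the Calabi-ansatz relation $r^n=\norm{\cdot}_k^k$ of Section \ref{s:calans} (which identifies the scaling $\nu_t$ with the fibre scaling $\zeta\mapsto t^{n/k}\zeta$ on $L$) forces $\Omega_0=g\,\zeta^{k-1}\,d\zeta\wedge dz$ in a local trivialisation, where $dz$ abbreviates the pullback of a volume form on $D$. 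Passing to the coordinate $w=1/\zeta$ at infinity then shows that $\Omega_0$, and hence $\Omega$ (which is unchanged near $D$), has a pole of order exactly $k+1$ along $D$. Therefore $-K_X=(k+1)[D]$. Finally, $M=X\setminus D$ is affine, so $[D]$ is a positive multiple of an ample class; hence $-K_X$ is ample, $X$ is Fano, and its index is divisible by $k+1\ge 2$.

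For the converse I would proceed dually: given $X$ Fano with $-K_X=(k+1)[D]$ and $D$ ample, the $(k+1)$-st power of a section cutting out $D$ trivialises $-K_X$ off $D$, so $K_M$ is trivial, with $\Omega$ the dual section having a pole of order $k+1$ along $D$; ampleness of $D$ makes $M=X\setminus D$ affine. To see that $M$ is a smoothing of $C$ I would match the ends: adjunction $K_D=(K_X+D)|_D=(-k[D])|_D$ gives $N_{D/X}=-\tfrac1k K_D$, which equals the normal bundle $L^{-1}$ of $D_\infty$ in $\bar C$, so a punctured tubular neighbourhood of $D$ in $X$ is modelled on the end of $C$. \emph{This identification is the main obstacle:} normal-bundle data alone does not in general pin down an analytic neighbourhood, and upgrading ``$X$ and $\bar C$ have the same end'' to ``$M$ is a deformation of the affine variety $V=C\cup\{0\}$ that differs from it only over a compact set'' is precisely the soft point where one invokes the intuitive notion of smoothing adopted in Section \ref{smoothing}. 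I would spell out the identification of ends via the common compactification and argue that everything away from the apex is untouched.

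For Part (ii) I would run the Gysin sequence of $D\subset X$ together with the long exact sequence of the pair $(X,D)$ (recalling $H^*_c(M)\cong H^*(X,D)$), using that $D$ is ample of dimension $n-1$ and that $X$ is Fano, so $H^1(X)=H^3(X)=0$. The Gysin sequence $H^0(D)\xrightarrow{\cup[D]}H^2(X)\xrightarrow{j^*}H^2(M)\xrightarrow{\partial}H^1(D)$ has injective first map since $[D]\ne 0$, so once $H^1(D)=0$ it yields $H^2(M)=H^2(X)/\R[D]$ and $b^2(M)=b^2(X)-1$. The vanishing $H^1(D)=0$ holds for $n\ge 3$ by the Lefschetz hyperplane theorem together with $H^1(X)=0$, and for $n=2$ because adjunction gives $2g(D)-2=-k\,(D\cdot D)<0$, forcing $D=\P^1$. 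For $H^2_c$ I would feed the same inputs into the pair sequence $H^1(D)\to H^2(X,D)\to H^2(X)\to H^2(D)$: for $n\ge 3$ Lefschetz makes $H^2(X)\to H^2(D)$ injective and $H^1(D)=0$, whence $H^2(X,D)=0$, i.e.\ $H^2_c(M)=0$; for $n=2$ a short diagram chase identifies $H^2(X,D)$ with $\ker(H^2(X)\to H^2(D))$ and, composing with $j^*$ (noting $[D]\notin\ker$, as $D\cdot D\ne 0$), shows the natural map $H^2_c(M)\to H^2(M)$ is an isomorphism.

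Finally, for the Kähler-cone statement, I would use that $j^*\colon H^2(X)\to H^2(M)$ is surjective with kernel $\R[D]$, that $[D]$ restricts to $0$ on $M$, and that $[D]$ is ample. Given any class in $H^2(M)$, lift it to some $\kappa\in H^2(X)$; then $\kappa+t[D]$ is Kähler for $t\gg 0$ and restricts to the same class, so the image of the Kähler cone of $X$ under $j^*$ is all of $H^2(M)$. Overall, the analytic heart is the pole-order computation and the smoothing identification in Part (i); Part (ii) is a bookkeeping exercise in the Gysin and pair sequences once the ampleness of $D$ and the Fano property of $X$ are in hand.
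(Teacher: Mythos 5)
Your Part (ii) is essentially sound and close to the paper's argument (the paper obtains the vanishing of $H^2_c(M)$ for $n>2$ from Poincar\'e duality plus the Andreotti--Frankel theorem rather than from the Lefschetz theorem fed into the pair sequence, but both routes work, and the K\"ahler-cone argument is the same). The problems are in Part (i). Your forward direction starts from the premise that a smoothing $M$ ``agrees with the cone $C$ outside a compact set'', and you then glue the fibrewise compactification $\bar C$ of $C$ onto $M$ along a ``common end''. That premise is false, and the paper stresses the point in Section \ref{s:smoothingov}: a smoothing is obtained by perturbing the defining equations of $C\subset\C^N$ by lower-degree terms, which changes the variety everywhere; $M$ and $C$ are only \emph{diffeomorphic} near infinity, not biholomorphic (e.g.\ $\{\sum z_i^2=1\}$ versus $\{\sum z_i^2=0\}$), so there is no common holomorphic end along which to glue, and no reason why $\Omega$ should be ``unchanged near $D$''. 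The paper instead compactifies $M$ by taking its closure in the total space of $\mathcal{O}(1)$ over the weighted projective space $\P^{N-1}_{w_1,\dots,w_N}$; since only the top-degree parts of the equations survive at infinity, the divisor added is the same $D$ as for $C$, and $-K_X=(k+1)[D]$ then follows from adjunction together with $N_{D/X}=\frac{1}{k}(-K_D)$, with no pole-order computation needed. Also, your inference ``$M$ affine, hence $[D]$ is a positive multiple of an ample class'' is not valid in general; the paper gets ampleness of $D$ from the Nakai--Moishezon criterion, using both that $N_{D/X}$ is positive and that $M$ contains no compact analytic subsets (citing Goodman).

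In the converse direction you correctly identify the main obstacle --- upgrading the normal-bundle identification $N_{D/X}\cong L^{-1}$ to the statement that $X\setminus D$ is genuinely a smoothing (a fiber of a flat degeneration) of $C$ --- but you do not close it, and ``matching ends via the common compactification'' runs into the same biholomorphism problem as above. The paper closes this gap with the deformation to the normal cone: for a defining section $s$ of $D$ one forms the hypersurfaces $X_t=\{v\in L:\, tv=s(p(v))\}$ in the total space of $L=[D]$, which give $X$ for $t\neq 0$ and the total space of $L|_D$ for $t=0$, and which pairwise intersect exactly along $D$; compactifying $L$ by a point at infinity, removing the zero section, and blowing down yields a family over $\C$ inside $\C\times(L^*)^\times$ whose fibers over $\C^*$ are copies of $M$ and whose central fiber is $C$. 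Without this (or an equivalent) construction, the converse of Part (i) is not proved.
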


If $n = 2$, then $X = \P^2$ {or} $\P^1 \times \P^1$. All Fano $3$-folds $X$ of index equal to $2$ are listed in \cite[\S 12.1]{AG5}. Up to deformation, every del Pezzo surface $D$ of degree at most $7$ occurs as an anticanonical divisor in {at least} one of these. Since {the cases} $D = \P^{2}$, ${\rm Bl}_1\P^2$ do not occur, we recover the fact mentioned earlier that $K_D^\times$ is not smoothable {for such $D$}. On the other hand, $D = {\rm Bl}_3\P^2$ occurs twice (in a hyperplane section of Segre($\P^2 \times \P^2$), and in $\P^1 \times \P^1 \times \P^1$), and the versal deformation space
of the singularity $K_D^\times$ is known to consist of two irreducible components \cite[(8.2), (9.1)]{Altmann}.

\begin{proof}[Proof of Proposition \ref{p:smoothing}]
(i) Given $M$, we can construct $X$ by passing to the completion of $M$ in the total space of $\mathcal{O}(1)$ over the weighted projective space $\P^{N-1}_{w_1,...,w_N}$, which naturally contains $\C^N\setminus 0$ as a dense open subset. Then $N_{D/X}$ is positive and $M = X \setminus D$ contains no compact analytic sets, so that {$D$ must be ample} {by} the Nakai-Moishezon criterion 
(see \cite[Proposition 10]{Goodman} for some more details on this type of argument). The relation $-K_X = (k + 1)[D]$ is clear by adjunction.

For the converse, we need to show that $X \setminus D$ really is a smoothing of $C = (N_{D/X}^*)^\times $. This follows from a more general construction. Given a compact complex manifold $X$ and a smooth divisor $D$, let $p:L \to X$ denote the {line bundle associated to $D$} and $s$ a defining section for $D$. Construct a hypersurface $X_t = \{v \in L: tv = s(p(v))\} $ for each $t \in \C$. This is isomorphic to $X$ for $t \neq 0$ and to the total space of $L|_D$ for $t = 0$. Notice that any two hypersurfaces in this family intersect precisely along $D$ in the zero section. If $L$ is positive, we can first compactify the total space of $L$ by adding a single point at infinity, 
and then remove the zero section, to construct an affine variety $V = (L^*)^\times$. 
Our family $X_t$ parametrized by $t \in \C$ then induces a subfamily of the product family $\C \times V$ such that all fibers over $\C^*$ are copies of $M$ whereas the central fiber is isomorphic to $(L^*|_D)^\times = C$. 

(ii) We have $H^2_c(M) = H^{2n-2}(M)$ by Poincar{\'e} duality, and for $n > 2$, {this vanishes} because $M$ is Stein; see \cite[Theorem 1]{AF}. It follows from a version of the Gysin sequence (the long exact sequence of the pair $(X,X\setminus D)$, combined with the Thom isomorphism for $N_{D/X}$) that
$$H^0(D) \to H^2(X) \to H^2(X \setminus D) \to H^1(D).$$
Thus, $H^2(X) \to H^2(M)$ is onto with a $1$-dimensional kernel. In particular, the image of the K{\"a}hler cone of $X$ in $H^2(M)$ is an open convex cone; but this cone also contains the origin because $[D]$ is a positive line bundle and the curvature forms of Hermitian metrics on $[D]$ are exact off $D$.
\end{proof}

We now focus on smoothings of \emph{complete intersection} Calabi-Yau cones, where, by the Lefschetz hyperplane theorem \cite[\S 13.2.3]{voisin2}, $b^2(M) = 0$ if $n > 2$. Our examples will in fact be diffeomorphic to the Milnor fiber of $C$, hence homotopy equivalent to a bouquet of $n$-spheres \cite[Satz 1.7(iv)]{Hamm}.

In Sections \ref{s:cubic}--\ref{s:ODP}, we explain, {through} a few examples, how one can construct a diffeomorphism $\Phi$ from $C$ to $M$ away from compact sets and estimate the rate of convergence $\lambda$ of the holomorphic volume forms on $M$ and $C$ with respect to $\Phi$. 
Theorem \ref{thm:main} then yields a $1$-parameter scaling family of $i\partial\bar{\partial}$-exact AC Calabi-Yau metrics of rate $\max \{-2n,\lambda\}$ on $M$, which, according to Theorem \ref{t:uniqueness}, contains all Calabi-Yau metrics on $M$ that are AC with respect to $\Phi$.

In the remaining sections, we then discuss an obvious gauge fixing issue: The rates of convergence to the cone that we compute depend upon the \emph{choice} of a diffeomorphism $\Phi$ between $M$ and $C$, and may therefore not be optimal.
Section \ref{gross3fold} collects some general thoughts and examples {regarding this matter}. In Section \ref{s:optimalrate}, we then prove that the rate $-2\frac{n}{n-1}$ that we obtain for the usual smoothing of the ordinary double point in $\C^{n+1}$ in Section \ref{s:ODP} is in fact optimal in a precise sense.}

\subsection{Example 1: Smoothings of a cubic cone in $\C^{4}$}\label{s:cubic}

We first consider the cubic cone 
$$C =\left\{z\in\C^{4}:\sum_{i=1}^{4}z_{i}^{3}=0\right\}.$$
Since this variety is simply the blowdown of the zero section of $K_{{\rm Bl}_6\P^2}$, using the Calabi ansatz, we can endow it with a Calabi-Yau cone structure $(g_{0}, \Omega_{0})$. By construction, the K\"ahler potential of $g_{0}$ is $\norm{\cdot}^{2/3}$, where $\norm{\cdot}$ is the norm induced on $K_{{\rm Bl}_6\P^2}$ by the K\"ahler-Einstein metric on ${\rm Bl}_6\P^2$. As for $\Omega_{0}$, it is the restriction to $C$ of the $(3,0)$-form $\Omega_0$ on $\C^4$ defined implicitly by the equation
\begin{equation}\label{hotholo}
dz_{1}\wedge ... \wedge dz_{4}=\Omega_{0}\wedge d\left(\sum_{i=1}^{4}z_{i}^{3}\right).
\end{equation}
We will consider one particular smoothing of $C$, {namely} the smooth affine variety $M$ defined by $$M =\left\{z\in\C^{4}:\sum_{i=1}^{4}z_{i}^{3}=1\right\}.$$
{Then $M$ carries a holomorphic volume form $\Omega$, defined by \eqref{hotholo} with $\Omega$ in place of $\Omega_{0}$. Estimating the rate of convergence of $\Omega$ and $\Omega_0$ with respect to $g_0$ and a diffeomorphism $\Phi$ we are yet to construct, and invoking Theorems \ref{thm:main} and \ref{t:uniqueness}, {allows us to} prove the following proposition.

\begin{prop}\label{cor:1}
For each $c>0$, $M$ admits an $i\p\bar{\p}$-exact {\rm AC} Calabi-Yau metric $\omega_c$ such that
\begin{equation}\label{e:cor1}
\Phi^*\omega_c - c\omega_0 =  const \cdot i\partial\bar{\partial}r^{-4} + O(r^{-7-\epsilon})
\end{equation}
for some $\epsilon>0$, {with} $\omega_{c_2} = \frac{c_2}{c_1}\omega_{c_1}$ for all $c_1, c_2 > 0$. Moreover, these are the only Calabi-Yau metrics on $M$ that are {\rm AC} with respect to the chosen diffeomorphism $\Phi$.
\end{prop}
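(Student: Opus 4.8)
The plan is to reduce everything to Theorems \ref{thm:main} and \ref{t:uniqueness}; the only genuinely new input is a diffeomorphism $\Phi$ between $C$ and $M$ near infinity together with a sharp estimate of the rate $\lambda$ at which $\Phi^*\Omega$ approaches $\Omega_0$, after which the rest is bookkeeping. Since $M$ is a smooth affine hypersurface of dimension $n = 3 > 2$, the Lefschetz hyperplane theorem gives $b^2(M) = 0$, so $H^2(M) = 0$ and the only available K\"ahler class is $\mathfrak{k} = 0$. Thus $\omega$ may be taken $i\partial\bar{\partial}$-exact globally, the form $\xi$ of Definition \ref{d:acsupp} may be taken to be $0$ (so that effectively $\mu = -\infty$), and consequently $\nu = \max\{\lambda,\mu\} = \lambda$ in Theorem \ref{thm:main}.

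To build $\Phi$, write $F(z) = \sum_{i=1}^4 z_i^3$, so that $C = \{F = 0\}$ and $M = \{F = 1\}$, and note that $\partial F = (3z_1^2,\dots,3z_4^2)$ vanishes only at the apex. On $C \setminus \{0\}$ I would flow off $C$ along the transverse holomorphic direction $V = \overline{\partial F}/|\partial F|^2$, which satisfies $dF(V) \equiv 1$, until $F$ reaches the value $1$; by the implicit function theorem this defines a diffeomorphism $\Phi$ from $\{r > R\} \subset C$ onto a neighborhood of infinity in $M$ for $R \gg 1$. Since $F$ is homogeneous of degree $3$ under $z \mapsto \lambda z$, the field $V$ is homogeneous of degree $-2$, so $\Phi$ is asymptotically equivariant with respect to Euclidean scaling. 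Here one keeps in mind that, with all weights equal to $1$ and Fano index $k = 1$, the relation $r^n = \norm{\cdot}_k^k$ forces the cone radius to obey $|z| \sim r^{n/k} = r^3$.

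The rate estimate is the heart of the matter, and I would obtain it by a scaling reduction to unit size. Fix $z \in M$ with $|z| \sim \lambda$ large and set $\tilde z = z/\lambda$; then $F(\tilde z) = s := \lambda^{-3}$, so at unit scale comparing $M$ near $z$ with $C$ near the corresponding point amounts to comparing the smooth level sets $\{F = s\}$ and $\{F = 0\}$ near $|\tilde z| \sim 1$, where they meet transversally and depend real-analytically on the small parameter $s$. The Gelfand--Leray description (\ref{hotholo}) identifies $\Omega$ and $\Omega_0$ with the residues of $dz_1 \wedge \cdots \wedge dz_4 / F$ on the respective fibers, so at unit scale their difference is $O(s)$ with all derivatives. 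Both residue forms scale in the same way under $z \mapsto \lambda z$, while $\Omega_0$ is $g_0$-parallel, so the pointwise quantity $|\Phi^*\Omega - \Omega_0|_{g_0}$ is scale-invariant; scaling back up gives $|\Phi^*\Omega - \Omega_0|_{g_0} = O(s) = O(\lambda^{-3}) = O(r^{-9})$, and likewise for the $g_0$-derivatives. Hence $\lambda = -9$. As a reassuring check, the same computation applied to the ordinary double point $\sum z_i^2 = 1$, where the defining polynomial has degree $2$ and $k = n-1$, reproduces the rate $-2\tfrac{n}{n-1}$ of Section \ref{s:ODP}. The main obstacle is exactly this estimate, and the delicate point is that the scaling reduction must control all $g_0$-derivatives uniformly, which is why the real-analytic dependence of the residue forms on $s$ away from the singular fiber is essential.

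With $\nu = \lambda = -9 < -2n = -6$, the hypothesis $\nu \notin \{-2n,-2,\nu_0-2\}$ of Theorem \ref{thm:main} holds automatically (recall $\nu_0 \geq 1$, so $\nu_0 - 2 \geq -1$), and its ``Moreover'' clause produces, for each $c > 0$, an $i\partial\bar{\partial}$-exact AC Calabi--Yau metric $\omega_c \in \mathfrak{k} = 0$ with $\Phi^*\omega_c - c\omega_0 = const \cdot i\partial\bar{\partial}r^{2-2n} + O(r^{-2n-1-\epsilon}) = const \cdot i\partial\bar{\partial}r^{-4} + O(r^{-7-\epsilon})$, which is (\ref{e:cor1}). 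The scaling relation $\omega_{c_2} = \tfrac{c_2}{c_1}\omega_{c_1}$ is then immediate from Corollary \ref{c:whatisc}(i), since $\mathfrak{k} = 0$ and $M$ is Stein. Finally, any Calabi--Yau metric $\omega'$ that is AC with respect to $\Phi$ is asymptotic to $c'\omega_0$ for some $c' > 0$; then $\eta := \omega' - \omega_{c'}$ is $d$-exact (as $H^2(M) = 0$), lies in $C^\infty_{-\epsilon}(M)$ for some $\epsilon > 0$ because both terms decay to $c'\omega_0$, satisfies $(\omega')^n = \omega_{c'}^n = i^{n^2}\Omega \wedge \bar{\Omega}$, and has $\Ric(g_{c'}) = 0 \geq 0$, so Theorem \ref{t:uniqueness} forces $\eta = 0$, i.e.\ $\omega' = \omega_{c'}$. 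This establishes that the $\omega_c$ are the only Calabi--Yau metrics on $M$ that are AC with respect to $\Phi$.
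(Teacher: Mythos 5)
Your proposal is correct and reaches the conclusion by the same overall reduction to Theorems \ref{thm:main} and \ref{t:uniqueness}, but both technical ingredients are handled by a genuinely different route than in the paper. For the diffeomorphism, the paper uses a one-step normal projection $\Phi(z) = z + \alpha(z)\bar{z}^2$, with $\alpha(z) \sim |z|^{-4}$ produced by an implicit function theorem on $S^7 \times [0,\infty) \times \C$ (Lemma \ref{l:normproj}), whereas you flow along $\overline{\partial F}/|\partial F|^2$ until $F=1$; these move in the same transverse direction $\overline{\partial F}$ and are interchangeable. The real divergence is in the rate estimate: the paper derives $\alpha = O(r^{-12})$ with $g_0$-derivatives and then expands the residue representation of $\Phi^*\Omega$ explicitly on the charts $A_i = \{|z_i| > \frac{1}{3}|z|\}$, reading off $\Phi^*\Omega - \Omega_0 = O(r^{-9})$ term by term. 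You instead rescale to unit size, observe that the rescaled picture is the comparison of $\{F=s\}$ with $\{F=0\}$ at parameter $s = |z|^{-3} = r^{-9}$ on a fixed compact annulus away from the apex, where the residue forms and the projection maps depend smoothly on $s$ and agree at $s=0$, and conclude $O(s) = O(r^{-9})$ from the scale invariance of the pointwise $g_0$-norm of an $(n,0)$-form. Your route is more conceptual, controls all derivatives at once by compactness, and explains the shape of the answer (your ODP cross-check recovering $-2\frac{n}{n-1}$ is exactly the right sanity test); the paper's route is more explicit and produces the intermediate estimates on $\alpha$ and $dz_i$ that it reuses in Sections \ref{s:quadrics}--\ref{s:ODP}. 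Two small points to tidy, neither of which affects the argument: the normalization in the uniqueness step should read $(\omega')^n = (c')^n\, i^{n^2}\Omega\wedge\bar{\Omega}$ rather than $i^{n^2}\Omega\wedge\bar{\Omega}$, and deducing that the Ricci potential of an AC Ricci-flat $\omega'$ is actually constant (not merely pluriharmonic) requires the short Liouville observation that a pluriharmonic function tending to a constant at infinity is constant.
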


\begin{remark}\label{r:cor1}
Even modulo biholomorphism, $C$ admits many more affine smoothings than just $M$:
Every smoothing of $C$ in our sense is in particular a deformation of $C$. According to 
\cite{Kas},
\begin{equation}\label{e:cubic_versal}
t z_{1}z_{2}z_{3}z_{4}+\sum_{i=1}^{4}z_{i}^{3}+\sum_{1\leq i<j<k\leq 4}t_{ijk}z_{i}z_{j}z_{k}
+\sum_{1\leq i<j\leq 4}t_{ij}z_{i}z_{j}+\sum_{i=1}^{4}t_{i}z_{i}=\epsilon
\end{equation}
cuts out a versal deformation of $C$ in $\C^{16}\times\C^{4}$. We are only interested in the smooth fibers of the subfamily {given} by $t = 0$, $t_{ijk} = 0$, which are homotopy equivalent to $\vee ^{16} S^3$ \cite[Korollar 3.10]{GH} and pairwise diffeomorphic. Proposition \ref{cor:1} holds for each of these except for the {statement of the} rates in (\ref{e:cor1}). {As it turns out}, $\Omega$ converges to $\Omega_0$ at rate $-6$ if all $t_{ij} = 0$, and at rate $-3$ in general.
\end{remark}

\begin{remark}\label{r:higher_dim_cubic}
By \cite[Proposition 3.1]{Arezzo}, the cubic hypersurface $\sum_{i=1}^{n+1}z_{i}^{3}=0$ in $\P^{n}$ admits a K\"ahler-Einstein metric for any $n\geq 3$, so that, via the Calabi ansatz, the cubic cone $\sum_{i=1}^{n+1}z_{i}^{3}=0$ in $\C^{n+1}$ is seen to be Calabi-Yau. For any smoothing of this cone involving constant, linear, or quadratic terms as in (\ref{e:cubic_versal}), one can compute that the associated holomorphic volume forms converge at rate $-3\frac{n}{n-2}$, 
$-2\frac{n}{n-2}$, or $-\frac{n}{n-2}$ respectively. Thus, if $n > 3$, then Proposition \ref{cor:1} holds verbatim with these rates. For $n > 4$ we get rates slower than $-2$, and so the full strength of Theorem \ref{thm:main} is needed.
\end{remark}

\subsubsection{Construction of a diffeomorphism} The idea here is to project $C$ orthogonally onto $M$ in $\C^4$ to {construct} a diffeomorphism $\Phi$ from the complement of a compact subset of $C$ onto the complement of a compact subset of $M$. Using Lemma \ref{l:normproj} below, this mapping takes the form
$$\Phi(z_{1},...,z_{4}) =(z_{1} + \alpha(z)\bar{z}^{2}_{1},...,z_{4}+\alpha(z)\bar{z}^{2}_{4})$$
away from some {sufficiently large closed ball $\bar{B}_R\subset\C^4$ of radius $R$ centered at the origin}. 

\begin{lemma}\label{l:normproj}
There exists $R>0$ and a smooth function
$\alpha:\C^{4}\setminus
\bar{B}_{R}\to\C$ with
$\alpha(z) \sim |z|^{-4}$ such that 
$\sum_{i=1}^{4}(z_{i}+\alpha(z)\bar{z}_{i}^{2})^{3}=1$
for every $z\in C\setminus
\bar{B}_{R}$.
\end{lemma}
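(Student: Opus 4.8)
The plan is to reduce the lemma to solving a single scalar cubic equation for $\alpha$ and then to invoke the implicit function theorem. Geometrically, $\Phi$ is meant to be the orthogonal projection of $C$ onto $M$ inside $\C^4$. Since the real normal space to a level set of the holomorphic function $F(z) = \sum_{i=1}^{4}z_i^3$ is the complex line spanned by $\overline{\nabla F} = 3(\bar{z}_1^{2},\dots,\bar{z}_4^{2})$, the displacement is forced to have the form $z_i \mapsto z_i + \alpha\bar{z}_i^{2}$ for a single complex scalar $\alpha = \alpha(z)$. This is precisely the ansatz in the statement, so the only unknown is the scalar $\alpha$.

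First I would substitute the ansatz into the defining equation of $M$ and expand by the binomial theorem. Writing $A(z) = \sum_i |z_i|^4$, $B(z) = \sum_i |z_i|^2\bar{z}_i^{3}$, and $D(z) = \sum_i \bar{z}_i^{6}$, a direct computation gives
\[
\sum_{i=1}^{4}\bigl(z_i + \alpha\bar{z}_i^{2}\bigr)^3 = F(z) + 3A\alpha + 3B\alpha^2 + D\alpha^3 .
\]
Because $F \equiv 0$ on $C$, it suffices to define $\alpha$ on all of $\C^4\setminus\bar{B}_R$ as a solution of the reduced cubic $3A\alpha + 3B\alpha^2 + D\alpha^3 = 1$; the full equation then holds automatically on $C\setminus\bar{B}_R$. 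Note that this choice defines $\alpha$ on the whole punctured region, as the lemma requires, even though off the cone it is no longer the literal orthogonal projection.

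The essential nondegeneracy input is that the linear coefficient dominates. By the power-mean inequality one has $A = \sum_i|z_i|^4 \geq \tfrac14\bigl(\sum_i|z_i|^2\bigr)^2 = \tfrac14|z|^4$, while also $A \leq |z|^4$, and crudely $|B| \leq |z|^5$, $|D| \leq |z|^6$. Rescaling $\alpha = \beta/(3A)$ therefore converts the cubic into the fixed-point equation
\[
\beta = 1 - \frac{B}{3A^2}\beta^2 - \frac{D}{27A^3}\beta^3 ,
\]
whose nonlinear coefficients are $O(|z|^{-3})$ and $O(|z|^{-6})$. Hence for $R$ large the right-hand side maps the disc $\{|\beta - 1|\leq \tfrac12\}$ into itself as a uniform contraction in $z$, yielding a unique solution $\beta(z) = 1 + O(|z|^{-3})$. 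Equivalently, since $\partial_\alpha(3A\alpha + 3B\alpha^2 + D\alpha^3) = 3A\bigl(1 + O(|z|^{-3})\bigr)\neq 0$ near $\alpha_0 = 1/(3A)$, one may apply the implicit function theorem directly. Either way $\alpha(z) = \beta(z)/(3A(z))$ is smooth (the coefficients $A,B,D$ being polynomials in $z,\bar{z}$), and it is bounded above and below by constant multiples of $|z|^{-4}$, i.e.\ $\alpha(z)\sim|z|^{-4}$, as claimed.

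The only genuinely quantitative step is the uniform contraction, which rests on the lower bound $A\geq\tfrac14|z|^4$; everything else is bookkeeping in a perturbation of the trivial equation $3A\alpha = 1$. I do not expect a serious obstacle: the $C^0$ asymptotics are upgraded to the stated smoothness by differentiating the defining cubic and reusing the same lower bound on $A$ to control the resulting linear equations for the derivatives of $\alpha$.
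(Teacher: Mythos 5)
Your proof is correct and follows essentially the same route as the paper: expand the cube on $C$ to reduce the problem to the scalar cubic $3A\alpha+3B\alpha^2+D\alpha^3=1$ and solve it perturbatively off the dominant linear term, the key nondegeneracy being $A\geq\tfrac14|z|^4$ (equivalently, $\sum_i|w_i|^4>0$ on $S^7$, which is what the paper uses). The only difference is implementation: the paper substitutes $y=\alpha|z|$, $r=1/|z|$ and applies the implicit function theorem on $S^7\times[0,\epsilon)$ at $r=0$, while you run a uniform contraction argument directly in $\alpha$ (or the rescaled $\beta$); both yield the same smooth solution with $\alpha\sim 1/(3A)\sim|z|^{-4}$.
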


\begin{proof}
Taking complex coordinates $z=(z_{1},...,z_{4})$ on $S^7 \subset \C^{4}$, define a function $f$ by
\begin{equation*}
f:S^{7}\times[0,\infty)\times\C\to\C,
\;\, f(z,r,y)=
3y\left(\sum_{i=1}^{4}|z_{i}|^{4}\right)+3y^{2}\left(\sum_{i=1}^{4}|z_{i}|^{2}\bar{z}^{3}_{i}\right)+
y^{3}
\left(\sum_{i=1}^{4}\bar{z}_{i}^{6}\right)-r^{3},
\end{equation*}
and fix any point $p=(p_{1},...,p_{4})\in S^{7}$. Since $$f(p,0,0)=0,\;\,
\frac{\p f}{\p y}(p,0,0)=3\sum_{i=1}^{4}|p_{i}|^{4}\neq 0,$$ the implicit function theorem asserts the existence of a unique
smooth function $s_{p}$, defined in some open neighborhood
$U_{p} \times [0,\epsilon_p)$ of $(p,0),$ such that
$s_{p}(p,0)=0$ and $f(z,r,s_{p}(z,r))=0$.
An obvious covering argument on $S^{7}$ then yields a smooth function $s: S^7 \times [0,\epsilon) \to \C$ satisfying
$$s(z,0)=0, \quad f(z,r,s(z,r))=0.$$ 
We now set $R=\epsilon^{-1}$ and define $\alpha:\C^{4}\setminus \bar{B}_{R}\to\C$ by
$$\alpha(z) = \frac{1}{|z|}s\left(\frac{z}{|z|},\frac{1}{|z|}\right).$$
The fact that $\alpha$ satisfies $\sum_{i=1}^{4}(z_{i}+\alpha(z)\bar{z}_{i}^{2})^{3}=1$
for each $z\in C\setminus
\bar{B}_{R}$ is straightforward to verify. In order to complete the proof of the lemma, we need only show that $\alpha(z) \sim |z|^{-4}$.

To see this, observe that $\alpha(z)|z|^{4}P(z)=1$ on $\C^{4}\setminus\bar{B}_R$, where
\begin{equation*}
P(z)=3\left(\sum_{i=1}^{4}\frac{|z_{i}|^{4}}{|z|^{4}}\right)+3(\alpha(z)|z|)\left(\sum_{i=1}^{4}\frac{|z_{i}|^{2}\bar{z}^{3}_{i}}{|z|^{5}}\right)+(\alpha(z)|z|)^{2}
\left(\sum_{i=1}^{4}\frac{\bar{z}_{i}^{6}}{|z|^{6}}\right).
\end{equation*}
It then suffices to note that $\alpha(z)|z| = s(\frac{z}{|z|},\frac{1}{|z|}) \to 0$ uniformly as   $|z| \to \infty$.
\end{proof}

\subsubsection{Some preliminary estimates}
We next take conical coordinates $(r,x)$ on $C = \R^+ \times L$, where $r(z)=\norm{z}^{1/3}$ is the $g_0$-distance from $z$ to the apex of $C$, and define diffeomorphisms
$$
\nu_{t}: L\times[1,2]\ni(x,r)\mapsto
(x,tr)\in L\times[t,2t].
$$
We wish to write $\nu_t$ in terms of Cartesian coordinates on $\C^{4}$.

The basic observation is that $\nu_{t}(z)=t^\mu z$ for some $\mu > 0$ because $C$ is homogeneously embedded, or in other words because $-K_{{\rm Bl}_6\P^2}$ is very ample.
But then we must have $\mu = 3$ because
\begin{equation*}
t\norm{z}^{\frac{1}{3}}=tr(z)=r(\nu_{t}(z))=\norm{\nu_{t}(z)}^{\frac{1}{3}}=\norm{t^\mu z}^{\frac{1}{3}}
=t^{\frac{\mu}{3}}\norm{z}^{\frac{1}{3}}.
\end{equation*}
It now follows immediately that $|z_i| \sim r^3$ for all $i$, though the precise proportionality depends upon the K{\"a}hler-Einstein metric on ${\rm Bl}_6\P^2$. {In addition,} $z_i = O(r^3)$ with $g_0$-derivatives by Lemma \ref{simple321}.

Observe that $\alpha \sim r^{-12}$. We also require estimates on $d\alpha$ and {on} its covariant derivatives. Since $\alpha(z)|z|^4P(z) = 1$, we find that $|d\alpha|_{g_0} \leq C r^{-12}(r^{-1} + |dP|_{g_0})$. {Now}, {using the definition of $P$,} 
$$dP = Q_{-1} + (P_{-12}Q_{2} + P_3 d\alpha) + (P_{-24}Q_5 + P_{-6}d\alpha),$$
where $P_k$ (respectively $Q_k$) denotes a function (respectively $1$-form) that we already know is $O(r^k)$. {Hence}, $|d\alpha|_{g_{0}}=O(r^{-13})$.
More generally, {we have that} $\alpha=O(r^{-12})$ with $g_{0}$-derivatives.

\subsubsection{Computation of the asymptotics of the holomorphic volume forms} 
Using these preliminary estimates, we are now able to compute the rate of convergence of the holomorphic volume forms $\Omega$ on $M$ and $\Omega_0$ on $C$ with respect to our diffeomorphism $\Phi$. 
First observe that we may write
$$
\Omega_{0}=\left.(-1)^{i-4}\frac{dz_{1}\wedge ...\wedge\widehat{dz_{i}} \wedge ...\wedge dz_{4}}{3z_{i}^{2}}\right|_{C},
$$
and likewise for $\Omega$,
on the open subsets $C \cap \{z_{i}\neq0\}$ and $M \cap \{z_{i}\neq0\}$ respectively.

Next, consider the set $A_{4}=\{z\in C:|z_{4}|>\frac{1}{3}|z|>R\}$, where $R\gg 1$ is chosen sufficiently large so that $\frac{1}{3}|z| > |\alpha(z)||z|^{2}$ for all $z\in C$ with $|z|>R$. {Observe that} $z \in A_4$ implies that $z_{4}+\alpha(z)\bar{z}_{4}^{2}\neq 0$. 
As a consequence, we find that in a neighborhood of any point $z\in A_{4}$,
\begin{equation*}
\Phi^{*}\Omega=\left.\frac{d(z_{1}+\alpha(z)\bar{z}_{1}^{2}) \wedge
d(z_{2}+\alpha(z)\bar{z}_{2}^{2})\wedge
d(z_{3}+\alpha(z)\bar{z}_{3}^{2})}{3(z_{4}+\alpha(z)\bar{z}_{4}^{2})^{2}}\right|_{C},
\end{equation*}
so that, using our preliminary estimates,
\begin{equation*}
\Phi^{*}\Omega=\left.\frac{(dz_{1}+O(r^{-7}))\wedge
(dz_{2}+O(r^{-7}))\wedge(dz_{3}+O(r^{-7}))}{3z^{2}_{4}
(1+O(r^{-9})
)^{2}
}
\right|_{C}
=\Omega_{0}+O(r^{-9}).
\end{equation*}
This estimate works equally well on $A_{i}=\{z\in C:|z_{i}|>\frac{1}{3}|z|>R\}$ for all $i \neq 4$. In light of the fact that the $A_i$ cover $C \setminus \bar{B}_{3R}$, we deduce that $\Phi^{*}\Omega-\Omega_{0}=O(r^{-9})$ on $C\setminus\bar{B}_{3R}$ with $g_{0}$-derivatives.

\subsection{Example 2: Smoothings of the intersection of two quadric cones in $\C^{5}$}\label{s:quadrics} Our second
example is based on the complete intersection cone
\begin{equation*}
C =\left\{z\in\C^{5}:f_{1}(z)=\sum_{i=1}^{5}z_{i}^{2}=0,\;\, 
f_{2}(z)=\sum_{i=1}^{5}\lambda_{i}z^{2}_{i}=0\right\},
\end{equation*}
where $\lambda_{i}\neq\lambda_{j}$ for $i\neq j$. Let $Q_{1}, Q_{2}$ be the 
projectivizations in $\P^{4}$ of the quadrics defining $C$.  Then $Q_{1}\cap Q_{2} = {\rm Bl}_5\P^2$ is an anticanonically embedded del Pezzo surface, so that $C$ realizes the blowdown of the zero section of $K_{Q_{1}\cap Q_{2}}$. The Calabi ansatz provides us with a Calabi-Yau cone metric $g_{0}$ {on $C$} with K{\"a}hler potential $\norm{\cdot}^{2/3}$, where $\norm{\cdot}$ is the norm induced on $K_{Q_{1}\cap Q_{2}}$ by the K\"ahler-Einstein metric on $Q_{1}\cap Q_{2}$. The cone $C$ also admits a holomorphic volume form $\Omega_{0}$, defined by $$dz_{1}\wedge ... \wedge dz_{5}=\Omega_{0}\wedge df_{1}\wedge df_{2}$$ (that is, by the Poincar\'e residue formula), whose length with respect to $g_{0}$ is constant.

We take the smoothing 
$M =\{z\in\C^{5}:f_{1}(z)=f_{2}(z)=1\}$ of $C$, which also admits a holomorphic volume form $\Omega$ satisfying $dz_{1}\wedge ... \wedge dz_{5}=\Omega\wedge df_{1}\wedge df_{2}$.
Using arguments as in Section \ref{s:cubic}, we then construct an orthogonal projection diffeomorphism $\Phi$ of the form $\Phi(z)_i = z_i + (\alpha(z) + \bar{\lambda}_i \beta(z))\bar{z}_i$. It turns out that this yields an estimate of $-6$ for the rate of convergence of  $\Omega$ towards $\Omega_0$.

\begin{prop}\label{cor:2}
For each $c>0$, $M$ admits an $i\p\bar{\p}$-exact {\rm AC} Calabi-Yau metric $\omega_c$ such that
\begin{equation}\label{e:cor:2}
\Phi^*\omega_c - c\omega_0 = O_\delta(r^{-6+\delta})
\end{equation}
with $g_0$-derivatives for every $\delta > 0$, and such that $\omega_{c_2} = \frac{c_2}{c_1}\omega_{c_1}$ for all $c_1, c_2 > 0$. {These} are again the only Calabi-Yau metrics on $M$ that are {\rm AC} with respect to the chosen diffeomorphism $\Phi$.
\end{prop}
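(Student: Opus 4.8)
The plan is to follow Section~\ref{s:cubic} almost verbatim, the one new feature being that $C$ is now cut out by two equations, so that the orthogonal projection must move along the two conjugate normals $\overline{\nabla f_1} = 2\bar z$ and $\overline{\nabla f_2} = 2\bar\lambda \bar z$ at once. First I would construct $\Phi$. Writing $\Phi(z)_i = z_i + (\alpha(z) + \bar\lambda_i\beta(z))\bar z_i$ and imposing $f_1(\Phi(z)) = f_2(\Phi(z)) = 1$ on $C$, where $\sum z_i^2 = \sum\lambda_i z_i^2 = 0$, the leading-order equations for the pair $(\alpha,\beta)$ form the linear system
\begin{equation*}
\begin{pmatrix} \sum_i |z_i|^2 & \sum_i \bar\lambda_i |z_i|^2 \\ \sum_i \lambda_i|z_i|^2 & \sum_i |\lambda_i|^2 |z_i|^2\end{pmatrix}\begin{pmatrix}\alpha\\\beta\end{pmatrix} = \tfrac{1}{2}\begin{pmatrix}1\\1\end{pmatrix} + (\text{lower order}).
\end{equation*}
After scaling out the homogeneity its determinant is bounded below: by Cauchy--Schwarz it is strictly positive precisely because the $\lambda_i$ are distinct and at least two coordinates are nonvanishing on $C\setminus\{0\}$. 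An implicit function theorem argument on $S^9 \subset \C^5$, exactly as in Lemma~\ref{l:normproj}, then produces $\alpha,\beta$ off a large ball and shows $\alpha,\beta = O(r^{-6})$ with $g_0$-derivatives.

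Next I would record the preliminary estimates. As in Section~\ref{s:cubic}, the cone is homogeneously embedded, so $\nu_t(z) = t^3 z$ and hence $|z_i| \sim r^3$, with $z_i = O(r^3)$ with $g_0$-derivatives by Lemma~\ref{simple321}. Together with $\alpha,\beta = O(r^{-6})$, this makes each perturbation $(\alpha + \bar\lambda_i\beta)\bar z_i$ of size $O(r^{-3})$, i.e.\ of relative size $O(r^{-6})$ against $z_i$. The heart of the matter is then the rate of $\Phi^*\Omega - \Omega_0$. Writing both volume forms through the Poincar\'e residue, e.g.
\begin{equation*}
\Omega_0 = \left.\frac{dz_1\wedge dz_2\wedge dz_3}{4(\lambda_5 - \lambda_4)\,z_4 z_5}\right|_C
\end{equation*}
on the chart $\{z_4, z_5 \neq 0\}$ (the distinctness of the $\lambda_i$ again guaranteeing a nonvanishing $2\times 2$ Jacobian), I would substitute $\Phi$ and track errors. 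Differentiating the coordinate perturbation costs one power of $r$, so both the numerator $d\Phi_1\wedge d\Phi_2\wedge d\Phi_3$ and the denominator $z_4 z_5$ acquire only relative errors of order $r^{-6}$; since $|\Omega_0|_{g_0}$ is constant (the form being $\nu_t$-homogeneous of the correct weight), this yields $\Phi^*\Omega - \Omega_0 = O(r^{-6})$ with $g_0$-derivatives after covering $C$ near infinity by finitely many such charts. Thus $\lambda = -6$.

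Finally I would assemble the conclusion. Since $M$ is an affine complete-intersection Calabi--Yau threefold, it is Stein with $b^2(M) = 0$ by the Lefschetz hyperplane theorem, so the only K\"ahler class is $\mathfrak{k} = 0$ and $\nu = \max\{\lambda,\mu\} = -6 = -2n$. This is the one value excluded in Theorem~\ref{thm:main}, which is exactly why the clean expansion of Proposition~\ref{cor:1} is unavailable here: applying Theorem~\ref{thm:main} with $\lambda$ weakened to $-6 + \delta$ (legitimate since the true rate is $-6$) gives, for every $\delta > 0$, an AC Calabi--Yau metric $\omega_c$ with $\Phi^*\omega_c - c\omega_0 = O(r^{-6+\delta})$, and uniqueness across the $\delta$'s identifies all of these as one and the same metric. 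The relation $\omega_{c_2} = \frac{c_2}{c_1}\omega_{c_1}$ is Corollary~\ref{c:whatisc}(i), valid because $\mathfrak{k} = 0$. For the last assertion, any Calabi--Yau metric $\omega$ on $M$ that is AC with respect to $\Phi$ is asymptotic to $c'\omega_0$ for some $c' > 0$; rescaling $\omega$ by a positive constant so that its tangent cone is $c\omega_0$, the difference $\omega - \omega_c$ is $d$-exact (as $H^2(M) = 0$), lies in $C^\infty_{-\epsilon}(M)$ for some $\epsilon > 0$, satisfies $\omega^n = \omega_c^n = i^{n^2}\Omega\wedge\bar\Omega$, and has $\Ric(\omega_c) = 0 \geq 0$, so Theorem~\ref{t:uniqueness} forces $\omega = \omega_c$. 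The main obstacle throughout is the residue computation of the volume-form rate and the recognition that it lands exactly on the exceptional weight $-2n$, so that only the $\delta$-weakened statement survives.
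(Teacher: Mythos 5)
Your proposal is correct and follows essentially the same route as the paper, which itself only sketches this case by referring back to the cubic example: the same orthogonal-projection map $\Phi(z)_i = z_i + (\alpha(z)+\bar{\lambda}_i\beta(z))\bar{z}_i$, the same residue computation giving $\Phi^*\Omega - \Omega_0 = O(r^{-6})$, and the same appeal to Theorems \ref{thm:main} and \ref{t:uniqueness}. Your explicit identification of $\lambda = -6 = -2n$ as the excluded weight, forcing the $\delta$-weakened statement \eqref{e:cor:2}, is exactly the point left implicit in the paper.
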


\begin{remark}\label{r:cor2}
In analogy with Remark \ref{r:cor1}, one can use \cite{Kas} to compute that the following equations cut out a
versal deformation of the cone $C$ in $\C^9 \times \C^5$:

\begin{equation}\label{e:versal_quadrics}
\alpha z_{4}^{2}
+\beta z_{5}^{2}+\sum_{i=1}^{5}z_{i}^{2}+\sum_{i=1}^{5}t_{i}z_{i}=\epsilon_{1}, \;\,\sum_{i=1}^{5}\lambda_{i}z^{2}_{i}=\epsilon_{2}.
\end{equation}
The smoothings in our sense are exactly the smooth fibers of the subfamily $\alpha = \beta = 0$. They are all diffeomorphic to each other, and homotopy
equivalent to $\vee^9S^3$. Proposition \ref{cor:2} holds true verbatim, except
that the rate $-6 + \delta$ in (\ref{e:cor:2}) {must} be replaced by $-3$ if at least one of the $t_i \neq 0$.
\end{remark}

\begin{remark}\label{r:higher_dim_quadrics}
By \cite[Corollary 3.1]{Arezzo}, the intersection of the quadrics $\sum_{i=1}^{n+2}z_{i}^{2}=0$,  $\sum_{i=1}^{n+2}\lambda_{i}z_{i}^{2}=0$ (with $\lambda_{i}\neq\lambda_{j}$ for $i\neq j$) in $\P^{n+1}$ admits a K\"ahler-Einstein metric for all $n \geq 3$, so the corresponding affine cone in $\C^{n+2}$ is Calabi-Yau. For smoothings of this cone involving constant or linear terms as in (\ref{e:versal_quadrics}), the associated holomorphic volume forms converge at rate $-2\frac{n}{n-2}$ or $-\frac{n}{n-2}$ respectively; if $n > 3$, then Proposition \ref{cor:2} holds verbatim with these rates. As in Remark \ref{r:higher_dim_cubic}, the rates are slower than $-2$ in most cases, but again always strictly faster than $-1$.
\end{remark}

\subsection{Example 3: The smoothing of the ordinary double point in $\C^{n+1}$}\label{s:ODP}
Our final example is easier, and the metrics we construct are in fact not new. The cone will simply be given by
$$C=\left\{z\in\C^{n+1}:f(z)=\sum_{i=1}^{n+1}z_{i}^{2}=0\right\},$$
the ordinary double point in $\C^{n+1}$. This can be viewed as a regular Calabi-Yau cone $(\frac{1}{k}K_D)^\times$ over the hyperquadric $D \subset \P^n$ with its unique SO$(n+1)$-invariant K{\"a}hler-Einstein metric. Notice that $D$ has index $n-1$ if $n > 2$ and is then embedded by the maximal root of $-K_D$, so that $k = n-1$. We will be working with the particular smoothing $M =\{z\in\C^{n+1}:f(z)=1\} \cong T^*S^n$.

This configuration is quite special compared to the examples in Sections \ref{s:cubic}--\ref{s:quadrics} for two reasons. First, the deformation space of $C$ is only $1$-dimensional now, with all smooth fibers biholomorphic to $M$. Bearing in mind {the fact} that $b^2(M) = 0$ if $n > 2$, this means that the AC Calabi-Yau metric we will construct is \emph{rigid} in the strongest possible sense. Second, the canonical SO$(n+1)$-action
on $\C^{n+1}$ preserves {both} $M$ and $C$, as well as the cone metric on $C$. {So} by Theorem \ref{t:uniqueness}, our {AC} metric {on $M$ has} to be SO$(n+1)$-\emph{invariant}. The relevant normal projection map $\Phi$ from $C$ to $M$ that we use to estimate {the} rates is given by the SO$(n+1)$-equivariant function $\Phi(z) = z + \frac{\bar{z}}{2|z|^2}$.

\begin{prop}\label{cor:3}
{If $n > 2$, then $M$ admits $i\p\bar{\p}$-exact {\rm AC} Calabi-Yau metrics $\omega_c$ such that
\begin{equation}\label{e:cor:3}
\Phi^*\omega_c - c\omega_0 = O(r^{-2\frac{n}{n-1}})
\end{equation}
and $\omega_{c_2} = \frac{c_2}{c_1}\omega_{c_1}$ for all $c_{1}, c_{2} > 0$. The metrics} $\omega_c$ are invariant under the obvious ${\rm SO}(n+1)$-action on $\C^{n+1}$, and are the only Calabi-Yau metrics on $M$ that are {\rm AC} with respect to $\Phi$.
\end{prop}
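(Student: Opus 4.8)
The plan is to follow the same scheme as in Propositions \ref{cor:1} and \ref{cor:2}: estimate the rate $\lambda$ at which $\Phi$ pulls $\Omega$ back to $\Omega_0$, then feed this into Theorems \ref{thm:main} and \ref{t:uniqueness}. One pleasant simplification here is that the given map $\Phi(z) = z + \frac{\bar{z}}{2|z|^2}$ already carries $C$ \emph{exactly} into $M$ at infinity: since $f$ is homogeneous of degree $2$, a direct computation gives $f(\Phi(z)) = f(z) + \frac{\sum_i z_i\bar{z}_i}{|z|^2} + \frac{\overline{f(z)}}{4|z|^4} = f(z) + 1 + \frac{\overline{f(z)}}{4|z|^4}$, which equals $1$ whenever $f(z) = 0$. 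Thus no implicit-function correction (as in Lemma \ref{l:normproj}) is needed, and $\Phi$ is injective at infinity because its correction term is of strictly lower order than $z$.

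First I would pin down the homogeneity, which is where the unusual exponent originates. Because $D$ has index $n-1$ and is embedded by the maximal root of $-K_D$, we have $k = n-1$, so by Section \ref{s:calans} the radius function satisfies $r^n = \norm{\cdot}_k^{k} = \norm{\cdot}_{n-1}^{n-1}$. Since $D$ is ${\rm SO}(n+1)$-homogeneous, the Hermitian metric induced on $\frac{1}{k}K_D$ by the K\"ahler--Einstein metric is invariant, so $\norm{\cdot}_{n-1}$ is comparable to the Euclidean norm $|z|$ up to a constant, whence $r^n \sim |z|^{n-1}$ and $|z| \sim r^{\frac{n}{n-1}}$. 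Comparing $r(\nu_t(z)) = t\, r(z)$ with the Cartesian scaling then forces $\nu_t(z) = t^{\frac{n}{n-1}} z$, so each $z_i$ is homogeneous of degree $\frac{n}{n-1}$ and $\psi_i := \frac{\bar{z}_i}{2|z|^2}$ is homogeneous of degree $-\frac{n}{n-1}$. By Lemma \ref{simple321} this yields $z_i = O(r^{\frac{n}{n-1}})$ and $\psi_i = O(r^{-\frac{n}{n-1}})$, both with $g_0$-derivatives.

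Next I would run the residue computation exactly as in Section \ref{s:cubic}. On the patch $A_i = \{|z_i| > \frac{1}{n+1}|z| > R\}$ one has $\Omega_0 = \pm\frac{1}{2z_i}\bigwedge_{j\neq i} dz_j$ and $\Phi^*\Omega = \pm\frac{1}{2(z_i+\psi_i)}\bigwedge_{j\neq i} d(z_j+\psi_j)$. Using $|dz_j|_{g_0} = O(r^{\frac{n}{n-1}-1})$, $|d\psi_j|_{g_0} = O(r^{-\frac{n}{n-1}-1})$, and $\psi_i/z_i = O(r^{-\frac{2n}{n-1}})$, one checks that replacing a single $dz_j$ by $d\psi_j$ in the numerator changes it at relative order $r^{-\frac{2n}{n-1}}$, and that the denominator contributes a correction of the same relative order. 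Since $|\Omega_0|_{g_0}$ is constant, this gives $\Phi^*\Omega - \Omega_0 = O(r^{-\frac{2n}{n-1}})$ with $g_0$-derivatives on all of $C \setminus \bar{B}_{(n+1)R}$, so that $\lambda = -\frac{2n}{n-1}$.

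Finally, since $b^2(M) = 0$ for $n > 2$, the only K\"ahler class is $\mathfrak{k} = 0$, so we may take $\mu = -\infty$ and $\nu = \lambda = -\frac{2n}{n-1}$. For $n > 2$ we have $\nu \in [-3,-2)$, hence $\nu \neq -2$, $\nu > -2n$ (so $\max\{-2n,\nu\} = \nu$ and no $i\partial\bar\partial r^{2-2n}$ refinement occurs), and $\nu < -2 < -1 \leq \nu_0 - 2$; thus $\nu \notin \{-2n,-2,\nu_0-2\}$. Theorem \ref{thm:main} then produces the metrics $\omega_c$ with $\Phi^*\omega_c - c\omega_0 = O(r^{-\frac{2n}{n-1}})$, the scaling relation $\omega_{c_2} = \frac{c_2}{c_1}\omega_{c_1}$ is Corollary \ref{c:whatisc}(i), and uniqueness among $\Phi$-AC Calabi--Yau metrics is Theorem \ref{t:uniqueness}. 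The ${\rm SO}(n+1)$-invariance follows from uniqueness: each $A \in {\rm SO}(n+1)$ fixes $f$, $\Omega$ and $g_0$ and commutes with $\Phi$, so $A^*\omega_c$ is again a $\Phi$-AC Calabi--Yau metric with the same asymptotics, forcing $A^*\omega_c = \omega_c$. The only genuinely delicate point is the homogeneity bookkeeping that produces the exponent $\frac{n}{n-1}$; once the scaling $\nu_t(z) = t^{\frac{n}{n-1}}z$ is fixed, the residue estimate is routine.
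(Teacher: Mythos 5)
Your proposal is correct and follows exactly the scheme the paper uses for this family of examples (Sections \ref{s:cubic}--\ref{s:ODP}): determine the homogeneity degree $\frac{n}{n-1}$ of the Cartesian coordinates from the Calabi ansatz with $k=n-1$, verify that the normal projection $\Phi(z)=z+\frac{\bar z}{2|z|^2}$ carries $C$ exactly into $M$ (which is precisely why the paper calls this example ``easier''), run the residue computation to get $\Phi^*\Omega-\Omega_0=O(r^{-2\frac{n}{n-1}})$, and then invoke Theorems \ref{thm:main} and \ref{t:uniqueness} together with $b^2(M)=0$, with ${\rm SO}(n+1)$-invariance following from uniqueness since the group preserves $f$, $\Omega$, $g_0$ and commutes with $\Phi$. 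All the numerology checks out ($\nu=-2\frac{n}{n-1}\in[-3,-2)$ avoids the excluded set $\{-2n,-2,\nu_0-2\}$), so there is nothing to add.
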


According to \cite[Lemma 5]{Stenzel}, this means that we have reproduced Stenzel's metric on $T^*S^n$. We will revisit this point in Section \ref{s:optimalrate} and use Stenzel's explicit formula to prove that the rate in (\ref{e:cor:3}) cannot be improved {upon} by deforming our chosen diffeomorphism $\Phi$.

\begin{remark}\label{r:ehrem} The case $n = 2$ is somewhat exceptional. We have $H^2_c(M) = H^2(M) = \R$, and every class is K{\"a}hler. Our theory then formally implies that each class in $H^2(M)$ contains a $1$-parameter family of AC Calabi-Yau metrics of rate $-4 + \delta$ for every $\delta > 0$. These are of course all isometric to some rescaling of Eguchi-Hanson. In particular, the rate is really $-4$. However, our family contains one more degree of freedom besides scaling: Identifying $M$ with $T^*\P^1$ by a hyper-K{\"a}hler rotation,
one sees that the harmonic vector fields $X_c$ defined after Corollary \ref{c:whatisc} are all equal to the scaling field $X = r\partial_r$ along the fibers of $T ^*\P^1$. The flow of $X$ accounts for the extra parameter.
\end{remark}

\subsection{Discussion of the gauge fixing issue}\label{gross3fold} 
The estimates on the rate of convergence of $g$ to $g_0$ in our examples {required the choice} of a diffeomorphism $\Phi$ between $C$ and $M$ near infinity. We
made a rather natural choice for $\Phi$, but conceivably the rates {we obtain} could be improved upon by applying a suitable 
gauge fixing. In this section, we collect {together} some simple remarks on this issue.

(1) Consider the ordinary double point $C$ from Section \ref{s:ODP}. If one smooths $C$ by adding on both constant and \emph{linear} terms to its defining equation and applies our theory, then one finds that the rate of convergence of the resulting AC Calabi-Yau metric to its asymptotic cone is at least $-\frac{n}{n-1}$. But every such smoothing is biholomorphic to the standard one considered
in Section \ref{s:ODP}, so these metrics are still isometric to Stenzel, {where} we know that the rate is in fact at least $-2\frac{n}{n-1}$.

(2) If $n = 3$, there exists a connection (though not an entirely rigorous one) between the values of the rates and certain results in algebraic geometry: Consider a $3$-dimensional \emph{compact} Calabi-Yau variety $Y$ with isolated singularities. It is reasonably well understood under what conditions $Y$ can or cannot be deformed to a smooth Calabi-Yau $3$-fold; see e.g. 
~\cite{Friedman, Gross}. On the other hand, if all the singularities of $Y$ are locally isomorphic to Calabi-Yau cones $C_i$, then $Y$ is widely expected to admit Calabi-Yau \emph{metrics} with conical singularities modelled on the $C_i$ \cite[Definition 4.6]{Chan}.
Assuming the existence of such metrics, Chan \cite{Chan} used gluing arguments to prove that $Y$ is indeed smoothable if there exist AC Calabi-Yau manifolds $M_i$ asymptotic to $C_i$ at rate \emph{strictly faster} than $-3$.

(2a) Our computations yield the critical rate $-3$ for AC smoothings of the ordinary double point, as was already pointed out by Chan. On the other hand, there do exist obstructions to smoothing
Calabi-Yau varieties $Y$ with ordinary double points \cite{Friedman}. This indicates that $-3$ is optimal.

(2b) In Sections \ref{s:cubic}--\ref{s:quadrics}, we found AC smoothings of rate $-3$ as well as $-6$ for certain complete intersection $3$-fold cones. By \cite[Theorem 3.8]{Gross}, compact Calabi-Yau varieties $Y$ with singularities locally isomorphic to any of these cones are in fact always smoothable, for algebraic reasons. 

(3) Everything we {have} said so far is consistent with conjecturing that, for a complete intersection {Calabi-Yau cone} $C$, the optimal rate for our AC Calabi-Yau metric on an affine smoothing $M$ of $C$ can be {computed} by realizing $M$ as a member of the particular versal deformation of $C$ constructed in \cite[p.~24]{Kas}; cf.~Remarks \ref{r:cor1}--\ref{r:higher_dim_cubic} and \ref{r:cor2}--\ref{r:higher_dim_quadrics}. But we have no evidence for this {conjecture} except {that it holds true for the ordinary double point, as we will see in Section \ref{s:optimalrate} below.}

\subsection{The optimal rate for smoothing the ordinary double point}\label{s:optimalrate}
We close by computing the leading term in the asymptotic expansion of Stenzel's Ricci-flat metric $g$ on $\sum_{i=1}^{n+1} z_i^2 = 1$ relative 
to its tangent cone metric $g_0$ on $\sum_{i=1}^{n+1} z_i^2 = 0$. We use the same diffeomorphism $\Phi(z) =$ $z + \frac{\bar{z}}{2|z|^2}$ as in
Section \ref{s:ODP} to identify Stenzel and its tangent cone away from compact subsets of each.

\begin{prop}\label{p:stenzelrate}
If $n > 2$, then $\Phi^*g - g_0$ is equal to the tracefree symmetric bilinear form \eqref{e:stenzel} to leading order. This has rate $-2\frac{n}{n-1}$ 
and satisfies the Bianchi gauge condition relative to $g_0$.
\end{prop}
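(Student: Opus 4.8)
The plan is to exploit the fact that Stenzel's metric is given by an explicit rotationally invariant K\"ahler potential and to expand everything in the ambient coordinates $z \in \C^{n+1}$. First I would record Stenzel's solution: on $M = \{\sum z_i^2 = 1\}$ the K\"ahler form is $i\p\bar\p\Psi(t)$ with $t = |z|^2$, where $\Psi$ solves the ordinary differential equation equivalent to the Monge-Amp\`ere equation $\omega^n = c\, i^{n^2}\Omega\wedge\bar\Omega$. Its leading asymptotics as $t \to \infty$ reproduce the cone potential $\Psi_0(t) \propto t^{(n-1)/n}$ (equivalently $\tfrac12 r^2$, using $r^n \sim |z|^{n-1}$ from the Calabi ansatz of Section \ref{s:calans} with $k = n-1$, so that $|z| \sim r^{n/(n-1)}$ and hence $|z|^{-2} \sim r^{-2n/(n-1)}$). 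The one input I really need from this step is the first subleading term of $\Psi$, i.e.\ a two-term expansion of $\Psi'(t)$ and $\Psi''(t)$.

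Next I would write both $g$ and $g_0$ as restrictions to $M$ and $C$ of the ambient Hessians of $\Psi(|z|^2)$ and $\Psi_0(|z|^2)$, expressed through $\Psi'$, $\Psi''$ and $z,\bar z, dz, d\bar z$, and then pull $g$ back by $\Phi(z) = z + \tfrac{\bar z}{2|z|^2}$. Recall that $\Phi$ maps $C$ into $M$ exactly, since $\sum_i \Phi(z)_i^2 = 1 + \tfrac14|z|^{-4}\,\overline{\sum_i z_i^2} = 1$ on $C$. There are then exactly two sources of corrections at order $|z|^{-2}$: the distortion $d\Phi - \mathrm{id} = O(|z|^{-2})$, and the shift $|\Phi(z)|^2 = t + 1 + O(t^{-1})$ together with the subleading term of $\Psi$ from the first step. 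Collecting these to leading order produces the tracefree symmetric $2$-tensor \eqref{e:stenzel}. Since this leading tensor is homogeneous of the corresponding degree and $|z|^{-2} \sim r^{-2n/(n-1)}$, Lemma \ref{simple321} yields the rate $-2\frac{n}{n-1}$ together with the matching decay of all $g_0$-derivatives.

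It remains to verify the tracefree and Bianchi gauge properties of \eqref{e:stenzel}, both of which I would check by direct computation on the explicit tensor. As a conceptual cross-check on tracefreeness, note that since $g$ and $g_0$ are both Calabi-Yau with volume form $c\, i^{n^2}\Omega\wedge\bar\Omega$, one has $(\Phi^*\omega)^n/\omega_0^n = |\Phi^*\Omega/\Omega_0|^2$, whose linearization is $\operatorname{tr}_{g_0}(\Phi^*g - g_0)$ to leading order; thus the trace is controlled by the rate of $\Phi^*\Omega - \Omega_0$ and vanishes once the latter is strictly faster than $-2\frac{n}{n-1}$. Given tracefreeness, the Bianchi gauge condition $\delta_{g_0}h + \tfrac12 d(\operatorname{tr}_{g_0}h) = 0$ reduces to the transversality $\delta_{g_0}h = 0$, which I would confirm by a divergence computation on \eqref{e:stenzel}; this is to be expected, since the orthogonal (nearest-point) projection $\Phi$ naturally produces a transverse perturbation. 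The main obstacle lies in the previous step: correctly combining the two $O(|z|^{-2})$ contributions, the diffeomorphism distortion and the subleading potential term, so as to pin down \eqref{e:stenzel} \emph{exactly} rather than merely its order, and in particular to rule out an accidental cancellation that would leave a faster-decaying leading term.
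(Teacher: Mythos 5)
Your overall strategy coincides with the paper's: take Stenzel's explicit potential, pull back by $\Phi(z)=z+\bar z/(2|z|^2)$, extract the leading tensor, and verify the trace and divergence conditions by direct computation. However, your bookkeeping of which terms contribute at leading order contains a concrete error. On the cone the cross term in $|\Phi(z)|^2$ vanishes because $\sum_i z_i^2=0$, so $\tau\circ\Phi=\tau+\frac{1}{4\tau}$, \emph{not} $\tau+1+O(\tau^{-1})$. This is not a cosmetic point: with the correct shift, the induced change in the potential is of relative order $\tau^{-2}$, and the subleading term of Stenzel's potential is likewise only of relative order $\tau^{-2}\log\tau$ ($n=3$) or $\tau^{-2}$ ($n\geq 4$); hence \emph{neither} of the two sources you bundle together contributes at the relative order $\tau^{-1}$ of \eqref{e:stenzel}. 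The entire leading term comes from the one remaining source, the distortion of the differentials under $\Phi^*$ acting on the Hermitian form $G_{ij}=\delta_{ij}-\bar z_iz_j/(n\tau)$ — which is why the paper can treat $\tau$ as a constant and needs no two-term expansion of the potential. Had you carried the spurious $+1$ through, the resulting $\Psi'(\tau)$ correction to the potential would inject a fake relative-$\tau^{-1}$ term into the Hessian and corrupt the leading tensor, so the computation as planned would not reproduce \eqref{e:stenzel}.

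Two smaller remarks. Your volume-form cross-check for tracefreeness is inconclusive here: $\Phi^*\Omega-\Omega_0$ has rate exactly $-2\frac{n}{n-1}$ (this is what produces the rate in Proposition \ref{cor:3}), so the identity only gives ${\rm tr}_{g_0}h=O(r^{-2n/(n-1)})$, and knowing that $|\Phi^*\Omega/\Omega_0|$ decays strictly faster is essentially equivalent to the tracefreeness you are trying to check. For the divergence, the paper's shortcut is worth adopting: by ${\rm SO}(n+1)$-invariance one has ${\rm div}_{g_0}(h)=\alpha(r)\,dr+\beta(r)\,d^cr$, and both coefficients vanish because ${\rm tr}\,h=0$ and $h(\partial_r,\cdot)=0$, which avoids any further explicit computation on \eqref{e:stenzel}.
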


\begin{remark}\label{r:ct1} This result contradicts \cite[Theorem 0.16]{Cheeger}. To see why, notice that the proof given in \cite{Cheeger} really only requires that $M$ is AC, i.e.~that the \emph{conclusions} of \cite[Theorem 0.15]{Cheeger} hold for $M$. Moreover, the argument is based on analyzing the linearized Ricci-flat equations on $C$ in a Bianchi gauge. {Finally,} as
explained at the end
of Section \ref{s:smoothingov}, the
optimal rates for $g$ and $J$ in this example are a priori equal. See Remark \ref{r:ct2} {below} for further details about the proof in \cite{Cheeger}.
\end{remark}

\begin{remark}
The $n = 2$ case is Eguchi-Hanson, whose leading term can be written as $i\partial\bar{\partial}r^{-2}$ on $\C^2/\Z_2$. This tensor has rate $-4$ and is obviously in Bianchi gauge relative to the flat metric.
\end{remark}

The proof of Proposition \ref{p:stenzelrate} is an explicit computation based on the following lemma. One could also use the presentation of the Stenzel metric in terms of left-invariant $1$-forms on ${\rm SO}(n+1)$ given in \cite{cvetic}; compare in particular the shapes of (\ref{e:stenzel}) and  \cite[(2.34)]{cvetic}.

\begin{lemma}[Stenzel \cite{Stenzel}]\label{l:stenzel}
Up to scaling, the unique {\rm SO}$(n+1)$-invariant {\rm AC} Calabi-Yau metric $g$
on the standard smoothing of the ordinary double point in $\C^{n+1}$ is given by $\omega = i\partial\bar{\partial}(f \circ \tau)$, where 
$\tau(z) = |z|^2$ in $\C^{n+1}$, and $f(\tau) = h(w)$ with $\tau = \cosh w$, $(h'(w)^n)' = (\sinh w)^{n-1}$, and $h'(0) = 0$. 
\end{lemma}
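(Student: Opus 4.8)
The plan is to exploit the cohomogeneity-one symmetry to turn the complex Monge-Amp\`ere equation into a single ODE, solve that ODE explicitly, and read off both the stated normal form and uniqueness. Write $z = x+iy$ with $x,y\in\R^{n+1}$; then $M=\{\sum z_i^2=1\}$ is cut out by $|x|^2-|y|^2=1$ and $\langle x,y\rangle=0$, and $\tau(z)=|z|^2=|x|^2+|y|^2=1+2|y|^2\geq 1$. The group SO$(n+1)$ acts on the level sets $\{\tau=s\}$ transitively for $s>1$ (these are Stiefel manifolds of orthonormal $2$-frames), while the zero section $S^n=\{\tau=1\}$ is a single singular orbit; thus SO$(n+1)$ acts with cohomogeneity one and $\tau$ is a global coordinate on the orbit space, so every SO$(n+1)$-invariant function on $M$ is a function of $\tau$ alone. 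Existence of at least one invariant AC Calabi-Yau metric follows from Theorem \ref{thm:main} combined with the SO$(n+1)$-invariance of all the data and the uniqueness statement of Theorem \ref{t:uniqueness}; the content of the lemma is then to identify it. Since $M$ is Stein with $b^2(M)=0$ for $n>2$ (Proposition \ref{p:smoothing}), any K\"ahler form on $M$ is globally $i\partial\bar\partial$-exact, and averaging a potential over the compact group SO$(n+1)$ produces an invariant one; hence any SO$(n+1)$-invariant K\"ahler metric may be written as $\omega=i\partial\bar\partial(f\circ\tau)$.

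Next I would carry out the Monge-Amp\`ere reduction. From $\omega=\frac{df}{d\tau}\,i\partial\bar\partial\tau+\frac{d^2f}{d\tau^2}\,i\partial\tau\wedge\bar\partial\tau$ and the rank-one nature of $i\partial\tau\wedge\bar\partial\tau$,
$$\omega^n=\Big(\tfrac{df}{d\tau}\Big)^n(i\partial\bar\partial\tau)^n+n\Big(\tfrac{df}{d\tau}\Big)^{n-1}\tfrac{d^2f}{d\tau^2}\,(i\partial\bar\partial\tau)^{n-1}\wedge i\partial\tau\wedge\bar\partial\tau.$$
The two top-degree forms on the right are multiples of $i^{n^2}\Omega\wedge\bar\Omega$, and the key is to determine these multiples. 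This is a pointwise linear-algebra calculation that I would perform at the convenient orbit representative $z=(\cosh\theta,\,i\sinh\theta,\,0,\dots,0)$: diagonalising the induced Hermitian metric on $T_zM$ and using the residue relation $dz_1\wedge\cdots\wedge dz_{n+1}=\Omega\wedge d(\sum z_i^2)$ that defines $\Omega$, one finds $(i\partial\bar\partial\tau)^n|_M=c_n\,\tau\, i^{n^2}\Omega\wedge\bar\Omega$ and $(i\partial\bar\partial\tau)^{n-1}\wedge i\partial\tau\wedge\bar\partial\tau|_M=c_n'\,(\tau^2-1)\,i^{n^2}\Omega\wedge\bar\Omega$ for explicit constants, the factors $\tau$ and $\tau^2-1$ being exactly the geometric data measuring the distance of the orbit from the singular orbit. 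Since the Calabi-Yau condition is $\omega^n=\mathrm{const}\cdot i^{n^2}\Omega\wedge\bar\Omega$ (because $\Omega$ is a nowhere-vanishing holomorphic volume form), this reduces the equation to a single ODE in $\tau$.

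I would then substitute $\tau=\cosh w$, $f(\tau)=h(w)$, under which $\frac{df}{d\tau}=h'(w)/\sinh w$ and $\tau^2-1=\sinh^2 w$. A short computation shows the bracket collapses cleanly, $\big(\tfrac{df}{d\tau}\big)^{n-1}\big[\tfrac{df}{d\tau}\,\tau+\tfrac{d^2f}{d\tau^2}(\tau^2-1)\big]=(\sinh w)^{1-n}(h'(w))^{n-1}h''(w)$, so that the Ricci-flat equation becomes
$$(h'(w))^{n-1}h''(w)=\mathrm{const}\cdot(\sinh w)^{n-1},\quad\text{i.e.}\quad\big((h'(w))^n\big)'=\mathrm{const}\cdot(\sinh w)^{n-1}.$$
The multiplicative constant is absorbed by the rescaling $h\mapsto\lambda h$, which is exactly the scaling $\omega\mapsto\lambda\omega$ of the metric, leaving $(h'(w)^n)'=(\sinh w)^{n-1}$ as stated. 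Smoothness of $\omega$ across the zero section $\{w=0\}$ forces the potential, as a function of $\tau=\cosh w$, to be smooth at $\tau=1$, hence $h$ to be even in $w$; this gives $h'(0)=0$. The unique solution of the resulting initial value problem is $h'(w)=\big(\int_0^w(\sinh s)^{n-1}\,ds\big)^{1/n}$, which is smooth and positive for $w>0$, even, and grows like $e^{(n-1)w/n}$ as $w\to\infty$, yielding a complete metric with conical asymptotics; positivity of $\omega$ follows from $h'>0$ and $h''>0$. As $h$ is thereby determined up to the scaling constant and an irrelevant additive constant, the SO$(n+1)$-invariant AC Calabi-Yau metric is unique up to scaling.

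The main obstacle is the volume-form computation in the second paragraph: extracting the precise $\tau$-dependence of the two top-degree forms relative to $i^{n^2}\Omega\wedge\bar\Omega$. Everything else is either symmetry bookkeeping or ODE analysis, but it is this step that produces the factors $\tau$ and $\tau^2-1$ whose interplay, after the substitution $\tau=\cosh w$, is responsible for the collapse to $((h')^n)'=(\sinh w)^{n-1}$. I would double-check the constants there by an independent verification that the homogeneous cone solution $f(\tau)\propto\tau^{(n-1)/n}$ (equivalently, dropping the $1$ in $\tau^2-1$) recovers the Ricci-flat cone metric $\omega_0$ of Section \ref{s:ODP}.
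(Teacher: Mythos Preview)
The paper does not give its own proof of this lemma; it is quoted directly from Stenzel's paper as a black box and then used in the computation of Proposition~\ref{p:stenzelrate}. Your proposal correctly reconstructs Stenzel's original cohomogeneity-one reduction: the orbit structure, the reduction of any invariant potential to a function of $\tau$, the expansion of $\omega^n$, the substitution $\tau=\cosh w$, and the resulting collapse to $((h')^n)'=(\sinh w)^{n-1}$ are all as in \cite{Stenzel}. Your identification of the volume-form step as the crux is accurate; the algebraic check you outline (that $df/d\tau\cdot\tau + d^2f/d\tau^2\cdot(\tau^2-1)=h''(w)$ under $\tau=\cosh w$) is correct, and the matching of the constants $c_n$, $c_n'$ needed for this collapse does indeed hold and can be verified at the representative point you name. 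One small caveat: your sentence ``any SO$(n+1)$-invariant K\"ahler metric may be written as $\omega=i\partial\bar\partial(f\circ\tau)$'' relies on $b^2(M)=0$ and so is literally true only for $n>2$; for $n=2$ one must restrict to the trivial class, but this is harmless since the lemma only asserts existence and uniqueness of the invariant AC Calabi--Yau metric, not that every invariant K\"ahler metric has this form.
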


One checks that $f(\tau) = C_n \tau^{1-(1/n)}(1 + k(\tau))$ with $k(\tau) \sim \tau^{-1}$ if $n = 2$, 
$k(\tau) \sim \tau^{-2}\log \tau$ if $n = 3$, and $k(\tau) \sim \tau^{-2}$ if $n \geq 4$.
The Calabi-Yau cone metric $g_0$ on the ordinary double point which serves as an asymptotic model for the Stenzel metric $g$ has K{\"a}hler potential $\tau^{1-(1/n)}$ up to scaling.

\begin{proof}[Proof of Proposition \ref{p:stenzelrate}]
Since $n \geq 3$, Lemma \ref{l:stenzel} and the above tell us that, up to scale, $g$ is equal to $\hat{g} = (1 + O(\tau^{-2}\log\tau))\hat{g}_0$ restricted from $\C^{n+1}$ to the quadric, where $\hat{g}_0 = \tau^{-(1/n)}G$ and
\begin{equation}\label{e:metric}
G = \sum {\rm Re}(G_{ij})(dx_i \otimes dx_j + dy_i \otimes dy_j) + {\rm Im}(G_{ij})(dx_i\otimes dy_j - dy_i \otimes dx_j)
\end{equation}
is the real symmetric bilinear form on $\C^{n+1}$ associated with the positive Hermitian matrix $$G_{ij} = \delta_{ij} -\frac{\bar{z}_i z_j}{n\tau}.$$

We now need to pull $\hat{g}$ {back} by $\Phi = {\rm id} + \Psi$, with $\Psi(z) = \frac{\bar{z}}{2\tau}$,
then subtract $\hat{g}_0$, and finally restrict to the tangent bundle of the quadric cone. 
Since $\tau \circ \Phi = \tau(1 + (2\tau)^{-2})$ and since we eventually only care 
about $O(\tau^{-1})$ relative errors, it will be enough to deal with $G$, treating
$\tau$ as a constant.

Some preparations first: By SO$(n+1)$-invariance, it suffices to work at $p_0 = (\frac{1}{\sqrt{2}},\frac{i}{\sqrt{2}},0,...,0)\sqrt{\tau}$ in the quadric cone. At this point, the tangent spaces to the cone and its link are given by 
\begin{align*}
T_{\rm cone} &= \ker\left(\sum x_l dx_l - y_ldy_l\right) \cap \ker\left(\sum x_l dy_l + y_l dx_l\right) = {\rm span}(\partial_{x_1} + \partial_{y_2}, \partial_{y_1} - \partial_{x_2}) \oplus \R^{2(n-1)}_{\rm last},\\
T_{\rm link} &= \ker(d\tau) \cap T_{\rm cone} = \ker(dx_1 + dy_2) \cap T_{\rm cone} =  {\rm span}(\partial_{y_1} - \partial_{x_2}) \oplus \R^{2(n-1)}_{\rm last}.
\end{align*}

We can then determine $(({\rm id} + \Psi)^* - {\rm id}^*)G$ to leading order, $O(\tau^{-1})$,
by computing the action of
$({\rm id} + \Psi)^* - {\rm id}^*$ 
on the right-hand side of (\ref{e:metric}) using the Leibniz rule. The first contribution, gotten by
acting on the ${\rm Re}(G_{ij})$ and ${\rm Im}(G_{ij})$ coefficients, is given by
$$-\frac{1}{n\tau^2} \sum {\rm Re}(z_iz_j)(dx_i \otimes dx_j + dy_i \otimes dy_j).$$
At $p_0$, it is easy to see that the restriction of this to $T_{\rm cone}$ vanishes, so we can just ignore it. As for the terms gotten by letting $({\rm id} + \Psi)^* - {\rm id}^*$ act on the differentials, we first observe that, at $p_0$,
\begin{align*}
\Psi^*dx_i &= d\frac{x_i}{2\tau} = \frac{dx_i}{2\tau} - \frac{x_i d\tau}{2\tau^2} = \frac{dx_i}{2\tau} - \delta_{1i}\frac{dx_1 + dy_2}{2\tau},\\
\Psi^*dy_i &= -d\frac{y_i}{2\tau} = -\frac{dy_i}{2\tau} + \frac{y_i d\tau}{2\tau^2} = -\frac{dy_i}{2\tau} + \delta_{2i}\frac{dx_1 + dy_2}{2\tau}.
\end{align*}
With this in hand, a lengthy but completely straightforward computation shows that the remaining 
$O(\tau^{-1})$ contributions to $(({\rm id} + \Psi)^* - {\rm id}^*)G$ are given by
$$
\frac{1}{\tau}\left(\begin{pmatrix} -\frac{1}{2n} & 0 & 0 & 0  \\
0 & \frac{1}{2n}-1 & 0 & 0 \\
0 & 0 & 1-\frac{1}{2n} & 0 \\
0 & 0 & 0 & \frac{1}{2n}
\end{pmatrix} \oplus {\rm id}_{n-1} \otimes \begin{pmatrix}1 & 0 \\ 0 & -1\end{pmatrix}\right)
$$
in terms of the real basis $\partial_{x_1}, \partial_{y_1}, \partial_{x_2}, \partial_{y_2}, (\partial_{x_i}, \partial_{y_i})_{i = 3}^{n+1}$ of $\C^{n+1}$.

The final step in computing $h := \Phi^*g - g_0$ modulo $o(\tau^{-1})$ is to restrict the ensuing expression for the leading term of $\Phi^*\hat{g} - \hat{g}_0$ to $T_{\rm cone}$ and {then} diagonalize with respect to $g_0 = \tau^{-(1/n)}G|_{T_{\rm cone}}$.
This is not difficult if one bears in mind {the fact that}
$$G = \begin{pmatrix} 1-\frac{1}{2n} & 0 & 0 & -\frac{1}{2n}  \\
0 & 1-\frac{1}{2n} & \frac{1}{2n} & 0 \\
0 & \frac{1}{2n} & 1-\frac{1}{2n} & 0 \\
-\frac{1}{2n} & 0 & 0 & 1-\frac{1}{2n}\end{pmatrix} \oplus  {\rm id}_{2(n-1)}.$$
{The end result is that}, at our chosen point $p_0$, {$h$ takes the form}
\begin{equation}\label{e:stenzel}h=\frac{1}{\tau}{\rm diag}\left(\begin{pmatrix}0 & 0 \\ 0 & 0\end{pmatrix},\begin{pmatrix}1 & 0 \\ 0 & -1\end{pmatrix}, ..., \begin{pmatrix}1 & 0 \\ 0 & -1\end{pmatrix}\right), \;\,\tau^{1-\frac{1}{n}} = r^2,\end{equation}
where $r$ denotes the radius function of the Ricci-flat cone metric $g_0$ and the matrix representation is with respect to a $g_0$-orthonormal basis $(\partial_r, J\partial_r), (X_3, Y_3), ..., (X_{n+1}, Y_{n+1})$ of $T_{\rm cone}$ such that the 
last $2n-2$ vectors form a common rescaling of the Euclidean basis $\partial_{x_3}, \partial_{y_3}, ..., \partial_{x_{n+1}}, \partial_{y_{n+1}}$.

To finish the proof, we must compute ${\rm div}_{g_0}(h)$. {Since} this is an SO$(n+1)$-invariant $1$-form on the cone, {it} can be written as $\alpha(r) dr + \beta(r) d^c r$. Indeed, the stabilizer of $p_0$ in SO$(n+1)$ is the standard SO$(n-1)$, so that $J\partial_r$ is the only isotropy invariant tangent vector to the link at $p_0$, hence the only vector at $p_{0}$ that extends to an SO$(n+1)$-invariant vector field on the link.
We compute that
$$\alpha(r) = ({\rm div}_{g_{0}}(h))(\partial_r) = -\sum (h(X_i, \nabla_{X_i}\partial_r) + h(Y_i, \nabla_{Y_i}\partial_r))  = -\frac{1}{r}({\rm tr}\, h )= 0,$$
where all derivatives are with respect to $g_0$ and where we have used {the fact} that $h(\partial_r, X) = 0$ for all vector fields $X$ on the cone. And similarly, we have $\beta(r)=0$, so that ${\rm div}_{g_0}(h)=0$, as desired.
\end{proof}

\begin{remark}\label{r:ct2} As discussed in Remark \ref{r:ct1}, the result of Proposition \ref{p:stenzelrate} indicates that the proof
of {\cite[Theorem 0.16]{Cheeger}} is not correct. (There is no problem with Theorems 0.13 or 0.15 {of \cite{Cheeger}}.) The proof in \cite{Cheeger} is valid if (7.106) and (7.137) hold. The Stenzel metric satisfies (7.106) but not (7.137). As a result, (7.136) and (7.138) also fail. In order to see this, one can compute {directly} that 
$$\dot{J}(\partial_r) = \dot{J}(J\partial_r) = 0, \quad X \perp \partial_r, J\partial_r \;\Longrightarrow \, \dot{J}X = 
\frac{1}{\tau}J\bar{X},$$
where the bar denotes complex conjugation on $\C^{n+1}$. Thus, $\dot{J} = \dot{J}^S$, so that $(d\dot{J}^S)^{SH} = 0$ (compare below). 
{We also} learn that (7.133) holds with $c = 2\frac{n}{n-1}$. Notice {here} that $J(r\partial_r) \perp \mathfrak{so}(n+1)$.

It turns out that the proof of Proposition 7.135 is correct, except for the fact that (7.134) should contain a correction term which vanishes when (7.137) holds. More precisely, the result cited from [B, p.~363] on p.~557 {of} \cite{Cheeger} is only applicable {whenever} (7.137) holds. In general, we have that
$$
\frac{1}{2}\Box^b = (\bar{\partial}^b)^*\bar{\partial}^b + \bar{\partial}^b(\bar{\partial}^b)^*  + (n-3)\mathcal{J}L_{\partial_\theta},
$$
where $\Box^b$ denotes the part of the Lichnerowicz Laplacian on the link that only involves horizontal covariant derivatives, as in (7.146). As a consequence, (7.151) should read $\underline{\mu} \geq -2(n-3)c$, so that (7.155) becomes a tautology. It is however still possible to prove that if (7.106) holds, then 
$$
(R^{\circ})^b \leq \lambda \,{\rm id}^b \;\Longrightarrow \, (n-3)c \leq\lambda.
$$

With regard to (7.106), we mention that while $(d \dot{J})^{SH}=0$
always holds, the remaining statements
can fail. However, the known counterexample {does in fact} satisfy the conclusions of Theorem 0.16. It may even be the case that $\dot{J} = \dot{J}^S$ always holds true, except in some special situations.
\end{remark}
\newpage
\appendix
\section{An $i\partial\bar{\partial}$-lemma for AC K{\"a}hler manifolds}\label{s:ddbar}

Recall that a complex manifold is \emph{$1$-convex} if it carries a smooth proper function which is strictly plurisubharmonic outside a compact subset.

\begin{example}
(i) AC K{\"a}hler manifolds are $1$-convex almost by definition.

(ii) Let $X$ be a compact complex manifold with a smooth divisor $D$ whose normal bundle $N_{D/X}$ is ample. Then $M = X \setminus D$ is $1$-convex; an exhaustion function can be constructed by transplanting $\frac{1}{h}$ from the total space of $N_{D/X}$ to $M$, where $h$ is a positively curved Hermitian metric on $N_{D/X}$.
\end{example}

The $1$-convexity of a complex manifold $M$ is equivalent to the existence of a \emph{Remmert reduction}, {that is}, a proper holomorphic map $\pi: M \to V$ onto a normal Stein variety $V$ with at worst finitely many singularities (in particular, $V \hookrightarrow \C^N$ and the singularities are algebraic) such that

$\bullet$ $\pi$ has connected fibers,

$\bullet$ ${\rm Exc}(\pi)$ is the maximal positive-dimensional compact analytic subset of $M$, 

$\bullet$ $\pi$ is an isomorphism onto its image away from ${\rm Exc}(\pi)$, {and}

$\bullet$ $\pi^*\mathcal{O}_{V} = \mathcal{O}_M$.

\noindent The map $\pi: M \to V$ is in fact unique up to isomorphism, and $V$ is then often simply referred to as
the Remmert reduction of $M$. See Grauert \cite[\S 2]{Grau:62} for details and further references.

The following proposition and corollary are due to van Coevering \cite[\S 4.2]{vanC}.

\begin{prop}\label{deldelbar}
Let $M$ be a $1$-convex manifold with trivial canonical bundle. 

{\rm (i)} We have $H^{k}(M,\mathcal{O}_{M})=0$ for each $k\geq 1$.

{\rm (ii)} If $n = \dim M > 2$ and if $K_R$ denotes the closed $R$-sublevel of a smooth proper function as in the definition of $1$-convex, then $H^{k}(M\setminus K_{R},\mathcal{O}_{M})=0$ for all $k\in\{1,..., n-2\}$ and $R \gg 1$.
\end{prop}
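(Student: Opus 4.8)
The plan is to exploit the Remmert reduction $\pi\colon M\to V$ furnished by $1$-convexity, whose basic properties are recalled above, together with the triviality of $K_M$. For Part (i), since $K_M$ is trivial a nowhere-vanishing holomorphic $n$-form identifies $\mathcal{O}_M\cong K_M$ as $\mathcal{O}_M$-modules, so it is enough to compute $H^k(M,K_M)$. Because $\pi$ is a proper modification of the $n$-dimensional manifold $M$ (an isomorphism away from the compact set $E=\mathrm{Exc}(\pi)$, with connected fibers), the Grauert--Riemenschneider vanishing theorem gives $R^q\pi_*K_M=0$ for all $q\ge 1$, hence $R^q\pi_*\mathcal{O}_M=0$ for $q\ge 1$. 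Feeding this into the Leray spectral sequence $E_2^{p,q}=H^p(V,R^q\pi_*\mathcal{O}_M)\Rightarrow H^{p+q}(M,\mathcal{O}_M)$ and using that $V$ is Stein, so that $H^p(V,\mathcal{F})=0$ for every coherent sheaf $\mathcal{F}$ and every $p\ge 1$ by Cartan's Theorem~B, the sequence degenerates and yields $H^k(M,\mathcal{O}_M)=H^0(V,R^k\pi_*\mathcal{O}_M)=0$ for all $k\ge 1$. I would record in passing that the very same vanishing $R^q\pi_*\mathcal{O}_M=0$ shows that $V$ has \emph{rational} singularities, a fact that will be reused in Part (ii).

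For Part (ii), I would first arrange the exhaustion defining the sublevels $K_R$ to be, outside a neighborhood of $E$, the pullback of a strictly plurisubharmonic exhaustion $\psi$ of the Stein space $V$. Then for $R\gg 1$ one has $E\subset K_R$ and $K_R=\pi^{-1}(\{\psi\le R\})$, so that $\pi$ restricts to a biholomorphism $M\setminus K_R\xrightarrow{\ \sim\ }V\setminus L$, where $L=\{\psi\le R\}$ is compact and contains the (isolated) singular locus of $V$; in particular $V\setminus L$ is smooth. The task thus becomes to show $H^k(V\setminus L,\mathcal{O})=0$ for $1\le k\le n-2$. Here I would invoke the long exact sequence of local cohomology with supports in $L$ on $V$, namely $H^k_L(V,\mathcal{O})\to H^k(V,\mathcal{O})\to H^k(V\setminus L,\mathcal{O})\to H^{k+1}_L(V,\mathcal{O})$. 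Since $V$ is Stein, $H^k(V,\mathcal{O})=0$ for $k\ge 1$, so for $k\ge 1$ this produces isomorphisms $H^k(V\setminus L,\mathcal{O})\cong H^{k+1}_L(V,\mathcal{O})$, and it suffices to prove $H^j_L(V,\mathcal{O})=0$ for $2\le j\le n-1$.

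The hard part is precisely this local-cohomology vanishing, the difficulty being that $L$ is a fat, non-analytic compact set meeting the singular point of $V$, so the naive Cohen--Macaulay computation for a point does not apply verbatim. I would resolve it through the depth of $\mathcal{O}_V$: rational singularities are Cohen--Macaulay, so $\mathcal{O}_V$ has depth $n$ at every point, and the standard homological-codimension estimate (Scheja) then forces $H^j_L(V,\mathcal{O})=0$ for $j<n$, which is exactly the range needed. An alternative, purely geometric route that uses only the data at hand is to note that $V\setminus L=\{\psi>R\}$ is $(n-1)$-concave in the sense of Andreotti--Grauert (its inner boundary $\{\psi=R\}$ is strictly pseudoconcave, while $\psi$ is a strictly plurisubharmonic exhaustion toward infinity), whence $H^k(V\setminus L,\mathcal{O})$ is finite-dimensional for $1\le k\le n-2$; one then kills these classes by comparison with the Stein filling $\{\psi<R'\}$ via the sequence above. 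Either way, the crux is the interplay between the Cohen--Macaulay (equivalently, rational) nature of the singularity and the concavity of the complement, and the remaining bookkeeping is formal.
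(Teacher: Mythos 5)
Your argument is correct and follows the same skeleton as the paper's proof: in (i) one reduces to $K_M$ via the trivialization and invokes Grauert--Riemenschneider, and in (ii) one passes to the Remmert reduction $V$, uses the local cohomology sequence to get $H^{k}(V\setminus L,\mathcal{O}_V)\cong H^{k+1}_{L}(V,\mathcal{O}_V)$ for $k\geq 1$ from Stein vanishing on $V$, and then kills $H^{j}_{L}(V,\mathcal{O}_V)$ for $j\leq n-1$ using that $V$ is Cohen--Macaulay (depth $n$) and that $L$ is a Stein compact. The differences are in the supporting steps, and two are worth recording. First, for (i) the paper quotes the absolute Grauert--Riemenschneider Korollar for $1$-convex manifolds directly, while you run the relative vanishing $R^{q}\pi_{*}K_M=0$ through Leray and Cartan B; these are equivalent, and your version has the advantage of producing $R^{q}\pi_{*}\mathcal{O}_M=0$ as a byproduct. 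Second, and more substantively, the paper establishes Cohen--Macaulayness of $V$ by citing Burns's $L^2$ criterion for rationality of an isolated singularity (applicable because $K_M$ is trivial), whereas you simply observe that $R^{q}\pi_{*}\mathcal{O}_M=R^{q}\pi_{*}K_M=0$ for the resolution $\pi$ is the definition of rational singularities; this is a cleaner route that recycles Part (i). One caution on attribution: the classical Scheja-type estimate is usually stated for supports in \emph{analytic} subsets, which, as you note yourself, $L$ is not; the statement actually needed is the Stein-compact version, which is what the paper cites (B\u{a}nic\u{a}--St\u{a}n\u{a}\c{s}il\u{a}, Ch.~I, \S 3, Theorem 3.1(a)), and its two hypotheses are exactly the ones you verify, namely depth $n$ and the fact that $L=\{\psi\leq R\}$ with $\psi$ strictly plurisubharmonic is a Stein compact. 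Your alternative Andreotti--Grauert concavity route does not appear in the paper.
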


Part (ii) is a fairly delicate result:~According to \cite[Chapter V, \S 1.4]{theory}, $H^{n-d-1}(\C^n\setminus \C^d,\mathcal{O}_{\C^n}) \neq 0$, and $H^1(\C^2 \setminus 0, \mathcal{O}_{\C^2})$ is in fact infinite-dimensional; see also \cite[Theorem 8.9.2]{Joyce}.

\begin{corollary}\label{c:ideldelbar}
Let $M$ be an {\rm AC} K\"ahler manifold with trivial canonical bundle.

{\rm (i)} If $\alpha$ is an exact real $(1,1)$-form on $M$, then there exists $u\in C^{\infty}(M)$ such that $\alpha=i\p\bar{\p}u$.

{\rm (ii)} If $n = \dim M > 2$ and if $\alpha$ is an exact real $(1,1)$-form on $M\setminus K$ for some compact $K \subset M$, then there exist a compact $K' \supset K$ and $u\in C^{\infty}(M\setminus K')$ such that $\alpha=i\p\bar{\p}u$ on $M\setminus K'$. 
\end{corollary}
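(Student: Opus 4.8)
The plan is to deduce both parts from the Dolbeault interpretation of the sheaf-cohomology vanishing in Proposition \ref{deldelbar}, via the standard reality trick that converts a vanishing $(0,1)$-Dolbeault group into a local $i\partial\bar\partial$-lemma. Throughout I would use the Dolbeault isomorphism $H^q(U,\mathcal{O}_M)\cong H^{0,q}_{\bar\partial}(U)$, valid on any complex manifold $U$, so that Proposition \ref{deldelbar}(i) reads $H^{0,1}_{\bar\partial}(M)=0$, while Proposition \ref{deldelbar}(ii) reads $H^{0,1}_{\bar\partial}(M\setminus K_R)=0$ for $R\gg1$. Note that the hypothesis $n>2$ is precisely what guarantees that $1$ lies in the admissible range $\{1,\dots,n-2\}$ of Proposition \ref{deldelbar}(ii).

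For Part (i), first I would write $\alpha=d\beta$ with $\beta$ a \emph{real} $1$-form, which is possible since $\alpha$ is real and exact, after replacing $\beta$ by $\frac{1}{2}(\beta+\bar\beta)$. Decomposing $\beta=\beta^{1,0}+\beta^{0,1}$, reality forces $\beta^{1,0}=\overline{\beta^{0,1}}$, and comparing types in the identity $d\beta=\alpha$, which has no $(0,2)$-component, yields $\bar\partial\beta^{0,1}=0$. The vanishing $H^{0,1}_{\bar\partial}(M)=0$ then produces a smooth function $f$ with $\beta^{0,1}=\bar\partial f$. A short type computation now gives $\alpha=\partial\beta^{0,1}+\bar\partial\beta^{1,0}=\partial\bar\partial f-\partial\bar\partial\bar f=\partial\bar\partial(f-\bar f)=i\partial\bar\partial(2\,{\rm Im}\,f)$, so $u=2\,{\rm Im}\,f$ is the desired real smooth potential.

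For Part (ii), the argument is word-for-word the same, carried out on $M\setminus K$ and then solved only on $M\setminus K'$, where the relevant cohomology actually vanishes. Concretely, I would fix $R\gg1$ so that Proposition \ref{deldelbar}(ii) applies, set $K'=K_R$ after enlarging if necessary so that $K'\supset K$, write $\alpha=d\beta$ with $\beta$ real on $M\setminus K$, extract $\bar\partial\beta^{0,1}=0$ exactly as above, and solve $\beta^{0,1}=\bar\partial f$ on $M\setminus K'$ using $H^{0,1}_{\bar\partial}(M\setminus K')=0$. The identical algebra then produces $\alpha=i\partial\bar\partial(2\,{\rm Im}\,f)$ on $M\setminus K'$.

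The entire mathematical content is concentrated in Proposition \ref{deldelbar}, which we are permitted to cite; the reductions above are purely formal. Accordingly, the main obstacle here is not analytic but organizational: in Part (ii) the vanishing is available only on the sublevel complements $M\setminus K_R$ for large $R$ (and only because $n>2$), which is exactly why the potential can be guaranteed on $M\setminus K'$ rather than on all of $M\setminus K$. I would also point out that the trivial-canonical-bundle hypothesis enters only inside the proof of Proposition \ref{deldelbar} and need not be reinvoked in the present deduction.
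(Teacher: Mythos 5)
Your proposal is correct and is precisely the intended derivation: the paper cites Corollary \ref{c:ideldelbar} as following from Proposition \ref{deldelbar} via exactly this standard reduction (Dolbeault isomorphism, reality of $\beta$, type comparison to get $\bar\partial\beta^{0,1}=0$, and the potential $u=2\,\mathrm{Im}\,f$), without spelling it out. Your handling of the two subtleties — that the vanishing in Part (ii) is only available on $M\setminus K_R$ for $R\gg1$ (forcing the enlargement $K'\supset K$), and that $n>2$ puts $k=1$ in the admissible range of Proposition \ref{deldelbar}(ii) — matches the paper's setup.
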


We now give the proof of Proposition \ref{deldelbar}. Part (i) is more standard and suffices for applications to quasiprojective manifolds \cite{ch2}.
 Part (ii) is only needed to prove Theorem \ref{thm:main} in full generality.

\begin{proof}[Proof of Proposition \ref{deldelbar}] Part (i) follows from a result of Grauert and Riemenschneider to the effect that $H^k(M, \mathcal{O}(K_M)) = 0$ for all $k \geq 1$ on every $1$-convex manifold $M$; see \cite[\S 2.4, Korollar]{Grauertriemannschneider}.

To prove Part (ii), let $\pi:M\to V$ denote the Remmert reduction of $M$ as above. We can clearly replace $M$ by $V$ in the assertion we need to prove. {Now, there} exists a long exact sequence
\begin{equation*}
H^{k-1}(V,\mathcal{O}_{V})\to H^{k-1}(V\setminus K_{R},\mathcal{O}_{V})\to H^{k}_{K_{R}}(V,\mathcal{O}_{V})\to H^{k}(V,\mathcal{O}_{V});
\end{equation*}
see \cite[Chapter I, \S1(a)]{Romanian} for the definitions. Since $V$ is Stein, this implies that
$$H^{k}(V\setminus K_{R},\mathcal{O}_{V})\cong H^{k+1}_{K_{R}}(V,\mathcal{O}_{V})$$ for $k\geq 1$.
We now wish to employ \cite[Chapter I, \S3, Theorem 3.1(a)]{Romanian} to see that $H^{k+1}_{K_{R}}(V,\mathcal{O}_{V})=0$ for $k \in\{0,...,n-2\}$, which would then clearly finish the proof.

Now, on one hand, $K_R$ can be written as the intersection of open sets with strictly pseudoconvex boundaries, which means that $K_R$ is a Stein compact in the sense of \cite[p.~29]{Romanian}.

The other condition we need to check is that the local rings of $V$ have depth $n$, or equivalently, that $V$ is Cohen-Macaulay. Rational implies Cohen-Macaulay \cite[Theorem 5.10]{KM}, and an isolated singularity is rational if there exists an $L^2$ holomorphic volume form in a deleted neighborhood \cite[Proposition 3.2]{Burns}. But this is clearly the case here because $K_M$ is trivial.
\end{proof}
\newpage

\bibliographystyle{amsplain}

\end{document}